\newtheorem{theorem}{Theorem}
\newtheorem{definition}[theorem]{Definition}
\newtheorem{lemma}[theorem]{Lemma}
\newtheorem{remark}[theorem]{Remark}
\newenvironment{proof}[1][Proof]{\noindent\textbf{#1.} }{\ \rule{0.5em}{0.5em}}
\title{Beyond the Melnikov method: a computer assisted approach}
\author{Maciej J. Capi\'nski \thanks{AGH University of Science and Technology, Faculty of Applied Mathematics, al. Mickiewicza 30, 30-059 Krak\'ow, Poland. Email: {\tt maciej.capinski@agh.edu.pl}.
}
\and
Piotr Zgliczy\'nski \thanks{Jagiellonian University, Faculty of Mathematics and Computer Science, ul. prof. Stanis\l awa \L ojasiewicza 6,
30-348 Krak\'ow, Poland. Email: {\tt umzglicz@cyf-kr.edu.pl}}
}
\begin{document}
\maketitle

\begin{abstract}
%We present a Melnikov type approach for establishing transversal intersections of stable/unstable manifolds of perturbed normally hyperbolic manifolds. The method is based on geometric estimates on the manifolds to obtain bounds on their first and second derivatives, and on rigorous, interval arithmetic integration of ODEs. The benefit from our approach is the following. We do not need to know the explicit formulas for the homoclinic orbits prior to the perturbation. We also do not need to compute any integrals along such homoclinics. All needed bounds are established using rigorous computer assisted numerics. Lastly, and most importantly, we establish intersections for an explicit range of parameters, and not only perturbations that are `small enough', as is the case in the classical approach.

We present a Melnikov type approach for establishing transversal intersections of stable/unstable manifolds of perturbed normally hyperbolic invariant manifolds (NHIMs). The method is based on a new geometric proof of the normally hyperbolic invariant manifold theorem, which establishes the existence of a NHIM, together with its associated invariant manifolds and bounds on their first and second derivatives.
 %The method establishes a domain within which the NHIM resides, and gives an explicit range over which its stable and unstable manifolds span out. The local bounds on the perturbation of such stable/unstable manifolds of NHIMs, which follow from the theorem, are propagated using rigorous, interval arithmetic integration of ODEs to establish their intersections. The benefit from our approach is the following. 
 We do not need to know the explicit formulas for the homoclinic orbits prior to the perturbation. We also do not need to compute any integrals along such homoclinics. All needed bounds are established using rigorous computer assisted numerics. Lastly, and most importantly, the method establishes intersections for an explicit range of parameters, and not only for perturbations that are `small enough', as is the case in the classical Melnikov approach.
\end{abstract}

\textbf{Keywords and phrases:} Melnikov method, normally hyperbolic invariant manifolds, whiskered tori, transversal homoclinic intersection, computer assisted proof

\textbf{AMS classification numbers:} 37D10, 58F15 ,65G20

%\tableofcontents
%TCIDATA{Version=5.00.0.2606}
%TCIDATA{LaTeXparent=0,0,MMFedit.tex}

\section{Introduction}

The presence of the transversal intersection between stable and unstable
manifolds for fixed point or periodic orbit is one of the main technical
tools used to prove the chaotic behavior of the deterministic dynamical
system (see for example \cite{GH} and the literature given there). In the
context of the small perturbations of an integrable system the basic
analytical technique used to establish the transversality is the Melnikov
method \cite{M} introduced in 1963. V.I. Arnold generalized these ideas to
produce the first example of what is now called Arnold Diffusion \cite%
{Arnold-diff64}. In fact the (now widely-used) Melnikov function (see for
example \cite{Wi,HM}) is, up to a constant, exactly the integral that Poincar%
\'{e} derived from Hamilton-Jacobi theory to obtain his obstruction to
integrability in the restricted three body problem in \cite{Poincare1890}. 
%For generalizations of Melnikov method, the Melnikov potential, see also \cite{DRR}.

Melnikov type methods are based on investigating integrals along homoclinic
orbits to normally hyperbolic invariant manifolds (NHIMs) \cite%
{Chow,DRR,DLS1, HM, M, Wi}. There are natural problems with such approach:
It is very rarely the case that one can establish analytic formulae for such
homoclinics. In most cases they are not known, and then computing integrals
along them is impossible. The second problem is that even if one has an
analytic formula for the homoclinic, the integral in question can be very
hard to compute. In most real life systems such integrals would not be
expressed through simple formulas.

We resolve these two problems the following way. Firstly, we investigate the
dependence of the manifolds on the parameter using geometric and computer
assisted tools. The slopes of the manifolds depending on the parameter
follow from cone condition type bounds in the state space extended by the
parameters. Second order derivatives also follow from geometric structures.
This way we obtain bounds on the stable and unstable manifolds of NHIMs,
together with their dependence up to second order on the perturbation
parameter. We then propagate these bounds using rigorous (interval based)
integration up to a section where they meet. Based on the bounds, and in
particular using the dependence on the manifolds on the perturbation
parameter, we establish transversal intersections for a given, explicit,
range of perturbations. The range is large enough so that for the larger
parameters from the range we can detect the transversal intersections
directly, and continue to higher perturbations using standard techniques.

Our contribution to the existing theory is twofold:

Firstly, in this paper we develop a method for establishing centre unstable
manifolds of NHIMs, in the context of ordinary differential equations. The
main benefit from our approach is that we do not need to assume that the
NHIM exists in order to apply our method. (Our method is constructive, not
perturbative.) We formulate assumptions, which guarantee the existence of a
center-unstable manifold within an investigated neighborhood. The
assumptions of our theorem depend only on the bounds on the first derivative
of the vector field. These guarantee that the center-unstable manifold
exists, and is a graph of a function within the investigated region. The
method gives explicit bounds on the slope of the manifold. Moreover, by
considering bounds on the second derivative of the vector field, we obtain
explicit estimates on the second order derivatives of the center-unstable
manifold. By changing the sign of the vector field, the method establishes
existence of center-stable manifolds. By intersecting the center-stable
manifold with the centre-unstable manifold we establish the existence of a
NHIM within the investigated region. Our method also establishes bounds on
the first and second order dependence of the manifolds on the parameters for
families of ODEs. Summing up: the method is explicit, establishes existence
of the manifolds over a specified, macroscopic domain, all assumptions can
be verified from simple estimates on the first and second derivative of the
vector field, and gives explicit estimates on the dependence of the
manifolds on parameters.

Our second contribution is developing a Melnikov-type theory for
establishing transversal intersections of stable/unstable manifolds of
NHIMs. The method is based on interval arithmetic integration of ODEs and
propagation of local bounds on the manifolds up to the point of their
intersection. The benefit from our approach is the following. We do not have
to know any analytic formulae for the homoclinics. They are established
using rigorous computer assisted numerics. Secondly, we do not need to
compute any integrals. All bounds on the manifolds are propagated by our
integrator in form of rigorous, interval arithmetic bounds for the jets.
This method allows us to establish intersections of the manifolds for
specific ranges of parameters. These ranges are large enough to later
continue the proof of the intersections of the manifolds using standard
continuation arguments.

We emphasize that, to the best of our knowledge, this is the first computer
assisted Melnikov type method, which works over an explicit parameter range.
Since our method does not rely on analytic computations along homoclinics,
we believe that our approach is very versatile and can be applied to
numerous problems that are not accessible to the standard methods.

The paper is organized as follows. We first address the problem how to
establish transversal intersections of manifolds for given ranges of
perturbation parameters, under the assumption that we have bounds on the
first and second derivatives of the stable/unstable manifolds of NHIMs. This
problem is introduced below in subsection \ref{sec:intro-setup}, and the
main idea behind our approach is explained in subsection \ref{sec:main-idea}%
. We then follow up with full details in section \ref{sec:Meln}, where the
formulation is made precise and the main results are proven. Secondly, we
address how to establish the needed bounds for the derivatives of
stable/unstable manifolds. In section \ref{sec:wcu-maps} we recall the
results from \cite{Geom}, where such bounds are established in the setting
of discrete dynamical systems. In section \ref{sec:wcu-maps} we also extend
the method to obtain explicit bounds on second derivatives of the manifolds.
In section \ref{sec:ODE-Wcu} we further extend the results from section \ref%
{sec:wcu-maps} to the setting of ODEs. We make sure that the needed
assumptions follow from the bounds on the vector field, so that we do not
have to integrate the ODEs. As the by-product we obtain also a
generalization of results from \cite{Geom} about establishing of NHIM for
ODEs. In section \ref{sec:num-example} we give an example of application of
our method.

An alternative to \cite{Geom} and its extension presented in this paper for
obtaining bounds on derivatives of stable/unstable manifolds of NHIMs, is
the parameterization method \cite{param-method}. This method is suitable for
application to computer assisted proofs. (For examples of such applications
see \cite{CF, CLJ, FH, LJR, JM}, amongst others.) We believe that our
approach to Melnikov method (from sections \ref{sec:intro-setup}, \ref%
{sec:main-idea}, and \ref{sec:Meln}) could also be successfully combined
with \cite{param-method}. We decide to use the geometric method \cite{Geom}
and its generalization to ODEs, since it does not require high order
expansions in order to establish existence of the manifolds, but follows
from direct estimates on first and second derivatives of the vector field.

In the two subsections that follow we specify the setup under which our
paper is written and outline the main idea.

\subsection{The setup \label{sec:intro-setup}}

In this section we formulate our main goals and set up the notation. The
problem is formulated in the simplest possible setting. We consider a
non-autonomous perturbation of an autonomous ODE on the plane. This enables
us to present the main features. Our method though can be applied in a much
more general setting.

We consider a vector field 
\begin{equation*}
f_{0}:\mathbb{R}^{2}\rightarrow \mathbb{R}^{2},
\end{equation*}%
and a function 
\begin{equation*}
g:\mathbb{R}^{2}\times \mathbb{R}\times \mathbb{R}\rightarrow \mathbb{R}^{2}.
\end{equation*}%
We assume that $g$ is $2\pi $ periodic in the last coordinate. We shall
consider the following family of time periodic ODEs 
\begin{equation}
\left( x,y\right) ^{\prime }=f_{\varepsilon }\left( x,y,t\right)
=f_{0}\left( x,y\right) +g\left( x,y,\varepsilon ,t\right) .
\label{eq:ode-perturbation}
\end{equation}%
We assume that for $\varepsilon =0$ holds $g\left( x,y,0,t\right) =0$. This
means that we treat $g$ as a perturbation, with $\varepsilon $ being the
perturbation parameter.

We shall assume that for $\varepsilon=0,$ (\ref{eq:ode-perturbation}) has a
hyperbolic fixed point $\left( x_{0},y_{0}\right) $ and that we have a
homoclinic orbit along the stable/unstable manifold of $\left(
x_{0},y_{0}\right) $.

Since the fixed point is hyperbolic, for $\varepsilon \neq 0$ it will be
perturbed to a $2\pi $ periodic hyperbolic orbit. We shall use a notation $%
\gamma _{\varepsilon }\left( t\right) $ for this orbit and assume that such
orbits exist for $\varepsilon \in E$, where $E\subset \mathbb{R}$ is a
closed interval around zero.

In order to investigate the intersections of the stable/unstable manifolds
of $\gamma _{\varepsilon }$ we consider a section $\Sigma $, which is
transversal to the homoclinic orbit (which exists for $\varepsilon =0$). For 
$\varepsilon \geq 0$, the stable manifold of $\gamma _{\varepsilon }$ for
the problem (\ref{eq:ode-perturbation}), with initial condition starting at
time $\tau $ will hit $\Sigma $ at a point, which we denote as $p^{s}\left(
\varepsilon ,\tau \right) $. Similarly, by $p^{u}\left( \varepsilon ,\tau
\right) $ we denote the point of intersection of the unstable manifold with $%
\Sigma $.

\begin{remark}
The construction of the points $p^{s}\left( \varepsilon ,\tau \right) $, $%
p^{u}\left( \varepsilon ,\tau \right) $ is made precise and carried out in
full detail in section \ref{sec:method}. Figure \ref{fig:WuWs} gives a
graphical illustration of the setup.
\end{remark}

We then define the (signed) distance $\delta $ between the two manifolds on $%
\Sigma $ as%
\begin{equation}
\delta \left( \varepsilon ,\tau \right) :=p^{u}\left( \varepsilon ,\tau
\right) -p^{s}\left( \varepsilon ,\tau \right) .  \label{eq:delta-setup-def}
\end{equation}%
The main question is to establish conditions on $\delta \left( \varepsilon
,\tau \right) $ that ensure that the stable/unstable manifolds of such
orbits intersect transversally, for all $\varepsilon \in E\setminus \{0\}$.

The above setting, in which we are perturbing a fixed point, is the simplest
one. In general we could be interested in intersections of stable/unstable
manifolds of perturbed NHIMs. The tools for establishing such manifolds and
their perturbations, together with all the ingredients needed to apply our
method are developed in \cite{Geom} and its generalization to ODEs form
section \ref{sec:ODE-Wcu}. There are no obstacles to generalizing to such
setting. We restrict ourselves though to the simplest case for the sake of
clarity of exposition and postpone detailed treatment of the general case
for NHIMs for later publication.

\subsection{The main idea in simplest terms \label{sec:main-idea}}

We consider a $C^{2}$ function $\delta :\mathbb{R\times S}^{1}\rightarrow 
\mathbb{R}$, which is defined in (\ref{eq:delta-setup-def}). Since for $%
\varepsilon =0$ the stable and unstable manifolds coincide forming a
homoclinic orbit, we know that%
\begin{equation*}
\delta \left( 0,\tau \right) =0\qquad \text{for all }\tau \in \mathbb{S}.
\end{equation*}%
For fixed $\varepsilon \in \mathbb{R}$ we will use the notation%
\begin{equation*}
\delta _{\varepsilon }\left( \tau \right) :=\delta \left( \varepsilon ,\tau
\right) .
\end{equation*}%
Let $E$ be a closed interval in $\mathbb{R}$, which contains zero. Our aim
is to give a simple set of assumptions that will lead to a conclusion that
for any $\varepsilon \in E\setminus \{0\}$ the function $\delta
_{\varepsilon }$ will have nontrivial zeroes. In other words, that there
exists a $\xi \left( \varepsilon \right) \in \mathbb{S}^{1}$ such that%
\begin{equation*}
\delta _{\varepsilon }\left( \xi \left( \varepsilon \right) \right)
=0,\qquad \frac{d\delta _{\varepsilon }}{d\tau }\left( \xi \left(
\varepsilon \right) \right) \neq 0.
\end{equation*}

For $A\subset \mathbb{R\times S}^{1}$ we shall write%
\begin{equation*}
\left[ \frac{\partial \delta }{\partial \varepsilon }\left( A\right) \right]
:=\left[ \inf_{\left( \varepsilon ,\tau \right) \in A}\frac{\partial \delta 
}{\partial \varepsilon }\left( \varepsilon ,\tau \right) ,\sup_{\left(
\varepsilon ,\tau \right) \in A}\frac{\partial \delta }{\partial \varepsilon 
}\left( \varepsilon ,\tau \right) \right] .
\end{equation*}%
Our idea is based on the fact that for any $\varepsilon \in E\setminus \{0\}$%
\begin{equation}
\frac{\delta _{\varepsilon }\left( \tau \right) }{\varepsilon }\in \left[ 
\frac{\partial \delta }{\partial \varepsilon }\left( E\times \left\{ \tau
\right\} \right) \right] .  \label{eq:mean-eps}
\end{equation}%
This means that if we can establish that for some $\tau _{1},\tau _{2}\in 
\mathbb{S}^{1}$%
\begin{equation}
\left[ \frac{\partial \delta }{\partial \varepsilon }\left( E\times \left\{
\tau _{1}\right\} \right) \right] <0<\left[ \frac{\partial \delta }{\partial
\varepsilon }\left( E\times \left\{ \tau _{2}\right\} \right) \right] ,
\label{eq:Bolzano-assmpt}
\end{equation}%
then for any $\varepsilon \in E\setminus \{0\}$, by (\ref{eq:mean-eps}) and (%
\ref{eq:Bolzano-assmpt}), 
\begin{equation*}
\frac{\delta _{\varepsilon }\left( \tau _{1}\right) }{\varepsilon }<0<\frac{%
\delta _{\varepsilon }\left( \tau _{2}\right) }{\varepsilon }.
\end{equation*}%
Hence, by the Bolzano theorem, there exists a $\xi \left( \varepsilon
\right) \in \left[ \tau _{1},\tau _{2}\right] $, such that%
\begin{equation*}
\delta _{\varepsilon }\left( \xi \left( \varepsilon \right) \right) =0.
\end{equation*}

If in addition to (\ref{eq:Bolzano-assmpt}) we have that%
\begin{equation}
\left[ \frac{\partial ^{2}\delta }{\partial \tau \partial \varepsilon }%
\left( E\times \left[ \tau _{1},\tau _{2}\right] \right) \right] >0,
\label{eq:der-assmpt}
\end{equation}%
then for any $\tau \in \left[ \tau _{1},\tau _{2}\right] $%
\begin{equation*}
\frac{d}{d\tau }\left( \frac{\delta _{\varepsilon }\left( \tau \right) }{%
\varepsilon }\right) \in \left[ \frac{\partial ^{2}\delta }{\partial \tau
\partial \varepsilon }\left( E\times \left[ \tau _{1},\tau _{2}\right]
\right) \right] >0.
\end{equation*}%
Thus, for each $\varepsilon \in E\setminus \{0\},$ the point $\xi \left(
\varepsilon \right) $ is unique and 
\begin{equation*}
\frac{d\delta _{\varepsilon }}{d\tau }\left( \xi \left( \varepsilon \right)
\right) \neq 0.
\end{equation*}

To sum up the above discussion, in order to show that for any $\varepsilon
\in E\setminus \{0\}$ we have nontrivial zeros of the function $\delta
_{\varepsilon }$, it is sufficient to verify (\ref{eq:Bolzano-assmpt}) and (%
\ref{eq:der-assmpt}). We emphasize that in this approach we have an explicit
range $E$ of $\varepsilon $ for which the nontrivial zeros exist.

Summing up, to compute the Melnikov distance $\delta $, our method combines
two ingredients, both computer assisted:

\begin{itemize}
\item the geometric method to establish explicit bounds for normally
hyperbolic invariant manifolds and their stable and unstable fibers,
together with their dependence on parameter.

\item the rigorous $C^2$-integration of our system away from the NHIM.
\end{itemize}

\noindent This method can be generalized to many dimensions.

%TCIDATA{Version=5.00.0.2606}
%TCIDATA{LaTeXparent=0,0,MMFedit.tex}

\section{Preliminaries}

\subsection{Notations and conventions}

We will use the Euclidian norm unless stated otherwise. For two vectors $%
z_{1},z_{2}$ we denote their scalar product by $(z_{1}|z_{2})$. For a matrix 
$A$, by $A^{\top }$ we denote the transpose of $A$. By $I$ we will denote
the identity matrix, the dimension will be known from the context.

For a set $A$, we shall use $A^c$ to denote its complement.

For a function $f:\mathbb{R}^{n}\rightarrow \mathbb{R}^{m}$ for $%
z_{1},z_{2}\in \mathbb{R}^{n}$ we define an average of $f$ on the segment $%
[z_{1},z_{2}]$ by 
\begin{equation*}
\overline{f}(z_{1},z_{2})=\int_{0}^{1}f(z_{1}+s(z_{2}-z_{1}))ds.
\end{equation*}%
Observe that we have the following equality for $f\in C^{1}$: 
\begin{equation*}
f(z_{2})-f(z_{1})=\overline{Df}(z_{1},z_{2})(z_{2}-z_{1}).
\end{equation*}

\subsection{Logarithmic norms and related topics}

In this section we state some facts about logarithmic norms \cite{D,L,HNW,KZ}
and some analogous notions. These are later used in section \ref{sec:ODE-Wcu}%
. Since the results are of technical nature, we give their proofs in the
appendix.

\begin{definition}
For a square matrix $A\in \mathbb{R}^{n\times n}$ we define $m(A)$ by 
\begin{equation*}
m(A)=\min_{z\in \mathbb{R}^{n},\Vert z\Vert =1}\Vert Az\Vert ,
\end{equation*}%
the logarithmic norm of $A$ denoted by $l(A)$ by \cite{L,D,HNW,KZ} 
\begin{equation}
l(A)=\lim_{h\rightarrow 0^{+}}\frac{\Vert I+hA\Vert -\Vert I\Vert }{h}
\label{eq:def-log-norm}
\end{equation}%
and the logarithmic minimum of $A$ 
\begin{equation}
m_{l}(A)=\lim_{h\rightarrow 0^{+}}\frac{m(I+hA)-\Vert I\Vert }{h}.
\label{eq:def-ml}
\end{equation}
\end{definition}

It is easy to see that if $A$ is invertible, then 
\begin{equation*}
m(A)= \frac{1}{\|A^{-1}\|},
\end{equation*}
otherwise $m(A)=0$.

It is known that $l(A)$ is a convex function.

\begin{lemma}
\label{lem:def-ml-ok} The limit in the definition of $m_{l}(A)$ exists and 
\begin{equation}
m_{l}(A)=-l(-A).  \label{eq:ml-min-l}
\end{equation}%
Moreover, the convergence to this limit is locally uniform with respect to $%
A $ and $m_{l}(A)$ is a concave function.
\end{lemma}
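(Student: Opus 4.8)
The plan is to deduce everything about $m_l(A)$ from the corresponding known facts about the logarithmic norm $l(A)$, using the elementary identity $m(I+hA) = \|(I+hA)^{-1}\|^{-1}$ valid for $h$ small enough that $I+hA$ is invertible (which holds for all sufficiently small $h>0$ since $I+hA \to I$). First I would rewrite the difference quotient in \eqref{eq:def-ml}: since $\|I\| = 1$ in the Euclidean norm, we have
\begin{equation*}
\frac{m(I+hA) - \|I\|}{h} = \frac{1}{h}\left(\frac{1}{\|(I+hA)^{-1}\|} - 1\right) = \frac{1 - \|(I+hA)^{-1}\|}{h\,\|(I+hA)^{-1}\|}.
\end{equation*}
Now $(I+hA)^{-1} = I - hA + O(h^2)$, so $\|(I+hA)^{-1}\| = \|I + h(-A) + O(h^2)\|$, and the quantity $\frac{\|(I+hA)^{-1}\| - 1}{h}$ converges, as $h \to 0^+$, to $l(-A)$ — this is essentially the definition \eqref{eq:def-log-norm} applied to $-A$, up to controlling the $O(h^2)$ term, which only shifts the difference quotient by $O(h)$ and hence does not affect the limit. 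Since $\|(I+hA)^{-1}\| \to 1$, the denominator $h\,\|(I+hA)^{-1}\|$ is asymptotically $h$, and therefore the whole expression tends to $-l(-A)$. This gives \eqref{eq:ml-min-l}.

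From \eqref{eq:ml-min-l} the remaining two assertions follow immediately from the corresponding properties of $l$. Concavity of $m_l$: the map $A \mapsto -A$ is linear, $l$ is convex (stated in the excerpt), so $A \mapsto l(-A)$ is convex, and hence $A \mapsto -l(-A) = m_l(A)$ is concave. Local uniform convergence: I would argue that the convergence of $\frac{\|I+hB\|-1}{h}$ to $l(B)$ is locally uniform in $B$ — this is standard, and follows for instance from the fact that $h \mapsto \frac{\|I+hB\|-1}{h}$ is monotone (nondecreasing) in $h>0$ by convexity of $h\mapsto\|I+hB\|$ together with $\|I\|=1$, so Dini's theorem applies on compact sets of $B$ once one notes the limit function $l$ is continuous (being convex and finite everywhere). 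Then substituting $B$ near $-A$ and tracking the extra $\|(I+hA)^{-1}\|$ factor (which converges to $1$ uniformly for $A$ in a compact set) transfers local uniformity to the difference quotient defining $m_l(A)$.

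The main obstacle, such as it is, is the bookkeeping around the $O(h^2)$ correction in $(I+hA)^{-1} = I - hA + O(h^2)$: one must check that replacing $(I+hA)^{-1}$ by $I-hA$ inside the norm changes the difference quotient by a quantity that vanishes as $h\to 0^+$, uniformly for $A$ in a compact set. This is a routine estimate — $\big|\,\|I+hA'\| - \|I+hA''\|\,\big| \le \|h(A'-A'')\| = h\|A'-A''\|$ and $\|(I+hA)^{-1} - (I - hA)\| = h^2\|A^2(I+hA)^{-1}\| = O(h^2)$ with constants controlled by a bound on $\|A\|$ — but it is the one place where a little care is genuinely needed rather than pure formal manipulation. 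Everything else is a direct translation through the identity $m(A) = 1/\|A^{-1}\|$ and the linear substitution $A \mapsto -A$.
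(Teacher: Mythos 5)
Your proof is correct and follows essentially the same route as the paper: both reduce $m_l(A)$ to $l(-A)$ via the identity $m(I+hA)=\|(I+hA)^{-1}\|^{-1}$, expand $(I+hA)^{-1}=I-hA+O(h^2)$, control the $O(h^2)$ remainder uniformly on compact sets of $A$, and derive concavity from the convexity of $l$ composed with $A\mapsto -A$. The only difference is that you sketch a Dini's-theorem argument for the locally uniform convergence of the difference quotient defining $l$, whereas the paper simply cites this fact from the literature; that is a welcome bit of self-containment but not a genuinely different approach.
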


\begin{proof}
See Appendix~\ref{app:ml}.
\end{proof}

Below theorem establishes a bound on distances of solutions of an ODE in
terms of the logarithmic norm. The proof of this result can be found in \cite%
{HNW}.

\begin{theorem}
\label{th:log-norm-ode}Consider an ODE 
\begin{equation}
x^{\prime }=f(t,x),  \label{eq:non-auto-ode}
\end{equation}%
where $x\in \mathbb{R}^{n}$ and $f:\mathbb{R}\times \mathbb{R}%
^{n}\rightarrow \mathbb{R}^{n}$ is $C^{1}$.

Let $x(t)$ and $y(t)$ for $t\in \lbrack t_{0},t_{0}+T]$ be two solutions of (%
\ref{eq:non-auto-ode}). Let $W\subset \mathbb{R}^{n}$ such that for each $%
t\in \lbrack t_{0},t_{0}+T]$ the segment connecting $x(t)$ and $y(t)$ is
contained in $W$. Let 
\begin{equation*}
L=\sup_{x\in W,t\in \lbrack t_{0},t_{0}+T]}l\left( \frac{\partial f}{%
\partial x}(t,x)\right) .
\end{equation*}%
Then for $t\in \lbrack 0,T]$ holds 
\begin{equation*}
\left\Vert x(t_{0}+t)-y(t_{0}+t)\right\Vert \leq \exp (Lt)\left\Vert
x(t_{0})-y(t_{0})\right\Vert .
\end{equation*}
\end{theorem}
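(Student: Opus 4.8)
The plan is to set $z(t) = x(t_0+t) - y(t_0+t)$ and $u(t) = \|z(t)\|$, prove the differential inequality $D^+ u(t) \le L\, u(t)$ for the upper right-hand Dini derivative, and then conclude by a standard comparison argument. First I would observe that $z$ satisfies a linear non-autonomous equation: writing $z'(t) = f(t_0+t, x(t_0+t)) - f(t_0+t, y(t_0+t))$ and applying the averaging identity from the preliminaries to the map $x \mapsto f(t_0+t, x)$, we get $z'(t) = A(t) z(t)$, where $A(t) = \overline{\tfrac{\partial f}{\partial x}(t_0+t,\cdot)}\big(y(t_0+t), x(t_0+t)\big)$ is the average of $\tfrac{\partial f}{\partial x}(t_0+t,\cdot)$ over the segment joining $y(t_0+t)$ and $x(t_0+t)$; by hypothesis this segment lies in $W$.

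Next I would estimate $D^+u$. For $h > 0$, expanding the solution gives $z(t+h) = z(t) + h A(t) z(t) + o(h) = (I + h A(t)) z(t) + o(h)$, hence
\[
u(t+h) \le \|(I + h A(t)) z(t)\| + o(h) \le \|I + h A(t)\|\, u(t) + o(h).
\]
Subtracting $u(t) = \|I\|\, u(t)$, dividing by $h$, and letting $h \to 0^+$ yields $D^+ u(t) \le l(A(t))\, u(t)$ directly from the definition of the logarithmic norm. It then remains to bound $l(A(t))$ by $L$: since $l$ is convex and $A(t)$ is an average (a continuous convex combination) of the matrices $\tfrac{\partial f}{\partial x}(t_0+t, \xi)$ with $\xi$ ranging over the segment, Jensen's inequality gives $l(A(t)) \le \sup_\xi l\big(\tfrac{\partial f}{\partial x}(t_0+t,\xi)\big) \le L$, the last inequality using that the segment is contained in $W$.

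Finally, from $D^+ u(t) \le L u(t)$ I would deduce $D^+\big(e^{-Lt} u(t)\big) \le 0$, so $t \mapsto e^{-Lt} u(t)$ is non-increasing on $[0,T]$, which is exactly $u(t) \le e^{Lt} u(0)$. The one point requiring care is the legitimacy of this last step, since $u = \|z\|$ need not be differentiable where $z$ vanishes. This is handled by noting that if $z(0) = 0$ then, by uniqueness for the linear equation $z' = A(t)z$, $z \equiv 0$ and the estimate is trivial; otherwise $z(t) \ne 0$ for all $t \in [0,T]$, so $u$ is in fact $C^1$ there (the Euclidean norm is smooth off the origin) and the argument reduces to a classical Gronwall inequality. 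Alternatively, one invokes the standard lemma that $D^+ w \le 0$ on an interval forces $w$ to be non-increasing, which needs only continuity of $w$. The main (mild) obstacle throughout is thus the bookkeeping around the nonsmoothness of the norm and the $o(h)$ terms; the core estimate $D^+u(t) \le l(A(t))\,u(t) \le L\,u(t)$ itself is immediate from the definitions.
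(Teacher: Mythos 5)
The paper states this theorem without proof, citing \cite{HNW}. Your argument is correct and is essentially the standard textbook proof: you reduce to the linear variational equation $z'(t)=A(t)z(t)$ via the averaging identity from the preliminaries, derive $D^{+}\Vert z(t)\Vert \le l(A(t))\Vert z(t)\Vert$ directly from the definition of the logarithmic norm, bound $l(A(t))\le L$ by the convexity of $l$ (noted in the paper) together with Jensen's inequality applied to the average of the Jacobian over the segment, and close with a Gronwall-type comparison. The two technical points you flag are handled correctly: the $o(h)$ bookkeeping works since $z$ is $C^1$ and $A(t)$ is held fixed in the one-sided limit defining $l$, and the nonsmoothness of the norm at the origin is disposed of either by uniqueness for the linear equation (if $z$ vanishes once it vanishes identically) or by the general lemma that a continuous function with nonpositive upper Dini derivative is nonincreasing.
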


Theorem \ref{th:log-norm-ode} gives an upper bound for the distance between
solutions of an ODE. We now show a similar result, which allows us to obtain
a lower bound.

\begin{theorem}
\label{th:log-norm-ode-lower-bound}Consider an ODE 
\begin{equation}
x^{\prime }=f(x),  \label{eq:lh-ode}
\end{equation}%
where $x\in \mathbb{R}^{n}$ and $f:\mathbb{R}^{n}\rightarrow \mathbb{R}^{n}$
is $C^{1}$.

Let $x(t)$ and $y(t)$ for $t\in \lbrack 0,T]$ be two solutions of (\ref%
{eq:lh-ode}). Let $W\subset \mathbb{R}^{n}$ be such that for each $t\in
\lbrack 0,T]$ the segment connecting $x(t)$ and $y(t)$ is contained in $W$.
Let 
\begin{equation*}
m_{l}(Df,W)=\inf_{x\in W}m_{l}(Df(x)).
\end{equation*}%
Then for $t>0$ holds 
\begin{equation*}
\Vert x(t)-y(t)\Vert \geq \exp (m_{l}(Df,W)t)\Vert x(0)-y(0)\Vert .
\end{equation*}
\end{theorem}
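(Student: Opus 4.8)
The plan is to mimic the proof of Theorem~\ref{th:log-norm-ode}, but estimating the derivative of $\|x(t)-y(t)\|$ from below rather than from above, using $m_l$ in place of $l$. Set $u(t)=x(t)-y(t)$ and $\phi(t)=\|u(t)\|$. First I would reduce to the case $\phi(t)>0$ on the whole interval: if $x(0)=y(0)$ the inequality is trivial, and if $x(0)\neq y(0)$ then by uniqueness of solutions of the autonomous ODE~(\ref{eq:lh-ode}) we have $x(t)\neq y(t)$ for all $t$, so $\phi$ is strictly positive and one can work with $\log\phi$.

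The key step is a one-sided derivative estimate for $\phi$. Writing $u'(t)=f(x(t))-f(y(t))=\overline{Df}(y(t),x(t))\,u(t)$, I would use the difference-quotient characterization of $m_l$ to show
\begin{equation*}
\liminf_{h\to 0^{+}}\frac{\phi(t+h)-\phi(t)}{h}\geq m_l\bigl(\overline{Df}(y(t),x(t))\bigr)\,\phi(t).
\end{equation*}
Concretely, $u(t+h)=u(t)+h\,u'(t)+o(h)=\bigl(I+h\,\overline{Df}(y(t),x(t))\bigr)u(t)+o(h)$, so $\|u(t+h)\|\geq m\bigl(I+h\,\overline{Df}(y(t),x(t))\bigr)\|u(t)\|-o(h)$ (using that $m(A)$ is the smallest singular value, hence $\|Az\|\geq m(A)\|z\|$), and dividing by $h$ and letting $h\to 0^{+}$ gives exactly the right-hand side by~(\ref{eq:def-ml}). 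Since the segment joining $x(t)$ and $y(t)$ lies in $W$ and $m_l$ is concave (Lemma~\ref{lem:def-ml-ok}), Jensen's inequality applied to the averaging integral gives $m_l\bigl(\overline{Df}(y(t),x(t))\bigr)\geq \inf_{x\in W}m_l(Df(x))=m_l(Df,W)$. Hence $D^{+}\phi(t)\geq m_l(Df,W)\,\phi(t)$ for all $t\in[0,T]$.

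Finally I would integrate this differential inequality. Since $\phi>0$ and continuous, $\psi(t):=\log\phi(t)$ is continuous with lower Dini derivative $D^{+}\psi(t)\geq m_l(Df,W)$; a standard comparison lemma for continuous functions with a one-sided derivative bounded below then yields $\psi(t)-\psi(0)\geq m_l(Df,W)\,t$, i.e. $\|x(t)-y(t)\|\geq \exp(m_l(Df,W)\,t)\,\|x(0)-y(0)\|$, as claimed.

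The main obstacle is the non-smoothness: $\phi(t)=\|u(t)\|$ need not be differentiable, so one must work with Dini derivatives throughout and be careful that the $o(h)$ error in the expansion of $u(t+h)$ is genuinely $o(h)$ uniformly enough to survive division by $h$ — this is where $C^{1}$ regularity of $f$ and compactness of $W$ (local uniformity of the limit in~(\ref{eq:def-ml}), also from Lemma~\ref{lem:def-ml-ok}) are used. The passage from the pointwise Dini inequality to the integrated bound is routine but should be stated as an explicit comparison lemma rather than glossed over. The use of autonomy is essential only to guarantee $\phi(t)>0$ for $t>0$ via uniqueness (so that $\log\phi$ makes sense); everything else would go through verbatim in the non-autonomous case on any interval where $x(t)\neq y(t)$.
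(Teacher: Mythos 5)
Your proof is correct, but it follows a genuinely different route from the paper's. The paper obtains the lower bound by a four-line time-reversal trick: apply Theorem~\ref{th:log-norm-ode} to the reversed vector field $x' = -f(x)$ with initial data $x(t), y(t)$, yielding $\|x(0)-y(0)\| \le \exp\bigl(t\,\sup_{z\in W} l(-Df(z))\bigr)\|x(t)-y(t)\|$, and then invoke $m_l(A) = -l(-A)$ from Lemma~\ref{lem:def-ml-ok} to rewrite the exponent. You instead prove the differential inequality $D^{+}\|x(t)-y(t)\| \ge m_l(Df,W)\,\|x(t)-y(t)\|$ directly from the difference-quotient definition~(\ref{eq:def-ml}) of $m_l$, using the averaged Jacobian $\overline{Df}(y(t),x(t))$, the concavity of $m_l$ and Jensen's inequality, and then integrate the Dini inequality. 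The paper's argument is shorter and outsources all the analytic work to the cited Theorem~\ref{th:log-norm-ode}; yours is self-contained and makes the mechanism transparent (it is essentially a rerun of the standard logarithmic-norm proof with $l$ replaced by $m_l$ and the inequality flipped), at the cost of having to handle Dini derivatives and state a comparison lemma explicitly. One small simplification worth noting: since $f\in C^1$ the curve $u(t)=x(t)-y(t)$ is $C^1$, so the $o(h)$ in $u(t+h)=u(t)+hu'(t)+o(h)$ is an honest pointwise $o(h)$ at each fixed $t$, and pointwise is all that the Dini comparison lemma needs — the local uniformity of the limit in~(\ref{eq:def-ml}) (over a compact set of matrices) is not actually required here. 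Also, in the non-autonomous case you flag at the end, the paper's time-reversal argument would go through just as well (one applies Theorem~\ref{th:log-norm-ode}, which is stated for non-autonomous ODEs, to $x'=-f(t_0+t-s,x)$ backwards from $t$), so neither proof genuinely depends on autonomy; it only enters your remark about positivity of $\phi$, which is likewise available via uniqueness in the non-autonomous $C^1$ case.
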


\begin{proof}
See Appendix~\ref{app:log-norm-ode-lower-bound}.
\end{proof}

In above results the choice of norms was arbitrary. We will apply these
results in the case when the norm is Euclidean. In such case we have the
following results.

\begin{lemma}
\label{lem:eucl-log-norm} For the Euclidian norm holds 
\begin{eqnarray}
l(A) &=&\max \{\lambda \in \text{spectrum of }(A+A^{\top })/2\}
\label{eq:eucl-log-norm} \\
m_{l}(A) &=&\min \{\lambda \in \text{spectrum of }(A+A^{\top })/2\}.
\label{eq:eucl-ml}
\end{eqnarray}
\end{lemma}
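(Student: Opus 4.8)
The plan is to prove Lemma~\ref{lem:eucl-log-norm} directly from the definitions \eqref{eq:def-log-norm} and \eqref{eq:def-ml}, using the fact that for the Euclidean norm $\|M\| = \sqrt{\lambda_{\max}(M^\top M)}$ is the largest singular value. For the logarithmic norm, I would compute $\|I+hA\|^2 = \lambda_{\max}\bigl((I+hA)^\top(I+hA)\bigr) = \lambda_{\max}\bigl(I + h(A+A^\top) + h^2 A^\top A\bigr)$. Writing $S = (A+A^\top)/2$, this matrix is $I + 2hS + O(h^2)$, and since $S$ is symmetric, a first-order perturbation argument (e.g. via the variational characterization $\lambda_{\max}(M) = \max_{\|v\|=1}(v|Mv)$) gives $\lambda_{\max}(I+2hS+O(h^2)) = 1 + 2h\,\lambda_{\max}(S) + O(h^2)$ as $h\to 0^+$. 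Hence $\|I+hA\| = \bigl(1 + 2h\lambda_{\max}(S) + O(h^2)\bigr)^{1/2} = 1 + h\lambda_{\max}(S) + O(h^2)$, and plugging into \eqref{eq:def-log-norm} with $\|I\|=1$ yields $l(A) = \lambda_{\max}(S)$, which is \eqref{eq:eucl-log-norm}.

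For \eqref{eq:eucl-ml}, I would run the analogous computation for $m(I+hA) = \min_{\|z\|=1}\|(I+hA)z\|$. Here $m(I+hA)^2 = \min_{\|z\|=1}\bigl(z\,|\,(I+hA)^\top(I+hA)z\bigr) = \lambda_{\min}\bigl(I + 2hS + h^2 A^\top A\bigr)$, and by the same first-order perturbation estimate for symmetric matrices (now for the smallest eigenvalue, using $\lambda_{\min}(M) = \min_{\|v\|=1}(v|Mv)$) this equals $1 + 2h\lambda_{\min}(S) + O(h^2)$. Taking square roots and substituting into \eqref{eq:def-ml} gives $m_l(A) = \lambda_{\min}(S)$. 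Alternatively, one can shortcut this half: by Lemma~\ref{lem:def-ml-ok}, $m_l(A) = -l(-A)$, and applying the already-proven formula \eqref{eq:eucl-log-norm} to $-A$ gives $m_l(A) = -\lambda_{\max}\bigl((-A-A^\top)/2\bigr) = -\lambda_{\max}(-S) = \lambda_{\min}(S)$, since the spectrum of $-S$ is the negation of the spectrum of $S$.

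The only slightly delicate point is justifying the first-order expansion $\lambda_{\max}(I + 2hS + O(h^2)) = 1 + 2h\lambda_{\max}(S) + O(h^2)$ uniformly enough that the limit defining $l(A)$ exists and equals $\lambda_{\max}(S)$; this is the main (mild) obstacle. It is handled cleanly without invoking analytic perturbation theory: from $\lambda_{\max}(M) = \max_{\|v\|=1}(v|Mv)$ one gets the two-sided bound $\lambda_{\max}(M) + \min(M-M') \le \lambda_{\max}(M') \le \lambda_{\max}(M) + \max(M'-M)$ in terms of eigenvalue bounds on the symmetric perturbation $M'-M$, so with $M = I + 2hS$ and $M' = (I+hA)^\top(I+hA)$ the difference is $h^2 A^\top A$, whose eigenvalues are $O(h^2)$. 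Everything else is routine: the identity $m(A) = 1/\|A^{-1}\|$ is quoted from the text, and the passage between the eigenvalue descriptions and the ``$\max/\min$ of the spectrum of $(A+A^\top)/2$'' phrasing is immediate since $(A+A^\top)/2$ is symmetric. I would then simply collect these computations into the statement.
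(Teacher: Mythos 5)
Your proof is correct, and your computation is exactly the argument one would find in a reference for \eqref{eq:eucl-log-norm}. The paper, however, does not prove \eqref{eq:eucl-log-norm} at all — it simply cites it as well known from \cite{HNW} — and then derives \eqref{eq:eucl-ml} in one line from Lemma~\ref{lem:def-ml-ok} via $m_l(A)=-l(-A)$ together with the fact that the spectrum of $-S$ is the negation of that of $S$. So for the second identity you and the paper take the same route (you also note the shortcut through Lemma~\ref{lem:def-ml-ok} as an alternative); for the first, you supply the self-contained derivation that the paper outsources. Your filling of that gap is sound: the Weyl-type two-sided bound $|\lambda_{\max}(M')-\lambda_{\max}(M)|\le\|M'-M\|$ for symmetric matrices, applied to $M'=(I+hA)^\top(I+hA)$ and $M=I+2hS$ whose difference is $h^2A^\top A$, does justify the first-order expansion cleanly, and the square-root expansion is elementary. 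The only stylistic point is that once you have $l(A)=\lambda_{\max}(S)$, the shortcut through Lemma~\ref{lem:def-ml-ok} is clearly preferable to redoing the minimum-eigenvalue computation, since it avoids re-verifying the invertibility of $I+hA$ for small $h$ that the identity $m(I+hA)=\|(I+hA)^{-1}\|^{-1}$ implicitly uses — which is presumably why the paper structures its proof that way.
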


\begin{proof}
The formula (\ref{eq:eucl-log-norm}) is well known \cite{HNW}. From Lemma %
\ref{lem:def-ml-ok}, $m_{l}(A)=-l(-A)$, which gives (\ref{eq:eucl-ml}).
\end{proof}

\begin{lemma}
\label{lem:lognorm-conv}Consider the Euclidean norm $\left\Vert \cdot
\right\Vert $. Assume that $A\in W$, where $W\subset \mathbb{R}^{n\times n}$
is compact. Assume also that $h\in (0,h_{0}]$. Then 
\begin{equation*}
\Vert I+hA\Vert =1+hl(A)+r(h,A),
\end{equation*}%
where 
\begin{equation*}
\Vert r(h,A)\Vert \leq Mh^{2},
\end{equation*}%
for some constant $M=M(h_{0},W)$ (the constant $M$ depends on $h_{0}$ and $W$%
).
\end{lemma}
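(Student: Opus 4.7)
The plan is to reduce everything to the spectral behavior of the symmetric part of $A$, using the Euclidean identity $\|B\|^2 = \lambda_{\max}(B^\top B)$ together with Weyl's perturbation inequality for symmetric matrices. First I would expand
\[
(I+hA)^\top(I+hA) = I + 2hS + h^2 A^\top A,
\]
where $S := (A+A^\top)/2$ is symmetric and, by Lemma \ref{lem:eucl-log-norm}, has $\lambda_{\max}(S) = l(A)$. Therefore
\[
\|I+hA\|^2 = \lambda_{\max}\bigl(I + 2hS + h^2 A^\top A\bigr).
\]

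Second, I would treat $h^2 A^\top A$ as a small symmetric perturbation of $I + 2hS$. By Weyl's inequality,
\[
\bigl|\lambda_{\max}(I+2hS+h^2 A^\top A) - \lambda_{\max}(I+2hS)\bigr| \leq h^2 \|A^\top A\| \leq h^2 \|A\|^2.
\]
Since $\lambda_{\max}(I+2hS) = 1 + 2h\,l(A)$ and $W$ is compact, setting $C_1 := \sup_{A\in W}\|A\|^2 < \infty$ yields
\[
\|I+hA\|^2 = 1 + 2h\,l(A) + \rho(h,A), \qquad |\rho(h,A)| \leq C_1 h^2.
\]

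Third, I would take the square root. Compactness of $W$ also bounds $|l(A)|$ uniformly, so the quantity $u := 2h\,l(A) + \rho(h,A)$ lies in a fixed neighborhood of $0$ for all $A\in W$ and $h\in(0,h_0]$ (after possibly shrinking $h_0$; alternatively, any larger $h$ in $(0,h_0]$ can be absorbed into the final constant $M$ since $\|I+hA\|$, $l(A)$ and $h^2$ are all uniformly bounded there). On that neighborhood the elementary estimate $\bigl|\sqrt{1+u} - (1 + u/2)\bigr| \leq C_2\,u^2$ applies with an explicit $C_2$. Substituting and collecting every term of order $h^2$ (namely $\tfrac12\rho(h,A)$ and the quadratic remainder in $u$, the latter bounded by a constant times $h^2 l(A)^2 + \rho(h,A)^2$) produces the desired decomposition $\|I+hA\| = 1 + h\,l(A) + r(h,A)$ with $|r(h,A)| \leq M h^2$ for some $M = M(h_0, W)$.

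The one point requiring care is uniformity of constants: compactness of $W$ handles uniformity in $A$ (bounding $\|A\|$ and $|l(A)|$), while $h \in (0, h_0]$ handles uniformity in $h$ via the Taylor remainder for $\sqrt{1+x}$. I do not expect a genuine obstacle beyond this bookkeeping, since Weyl's inequality already isolates the $h^2$ contribution cleanly from the linear-in-$h$ leading term.
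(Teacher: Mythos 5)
Your proposal is correct and follows essentially the same route as the paper: both compute $(I+hA)^\top(I+hA) = I + h(A+A^\top) + h^2 A^\top A$, identify $l(A)$ via the largest eigenvalue of the symmetric part (Lemma~\ref{lem:eucl-log-norm}), and Taylor-expand $\sqrt{1+u}$ uniformly over the compact set $W$. Your use of Weyl's inequality to peel off the $h^2 A^\top A$ term before taking the square root is a mild formalization of the step the paper performs by absorbing $h^2(A^\top A x|x)$ into an $O(h^2)$ inside the square root; it makes the uniformity in $x$ slightly more transparent but does not change the structure of the argument.
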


\begin{proof}
See Appendix~\ref{sec:app-lognorm}.
\end{proof}

\begin{lemma}
\label{lem:ml-conv} Assume that $A\in W$, where $W\subset \mathbb{R}%
^{n\times n}$ compact. Assume $h\in (0,h_{0}]$. Then 
\begin{equation*}
m(I+hA)=1+hm_{l}(A)+r(h,A)
\end{equation*}%
where 
\begin{equation*}
\Vert r(h,A)\Vert \leq Mh^{2}
\end{equation*}%
for some constant $M=M(h_{0},W)$ (the constant $M$ depends on $h_{0}$ and $W$%
).
\end{lemma}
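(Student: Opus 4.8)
The plan is to mimic the proof of Lemma~\ref{lem:lognorm-conv} almost verbatim, replacing the norm $\|\cdot\|$ by the quantity $m(\cdot)$ and the logarithmic norm $l$ by the logarithmic minimum $m_l$. First I would invoke the already-established fact $m_l(A) = -l(-A)$ from Lemma~\ref{lem:def-ml-ok}, and the identity $m(B) = 1/\|B^{-1}\|$ valid for invertible $B$. For $h \in (0,h_0]$ with $h_0$ small enough (depending on the compact set $W$ via a uniform bound $\|A\| \le C$ for $A\in W$), the matrix $I + hA$ is invertible, so $m(I+hA) = 1/\|(I+hA)^{-1}\|$. The idea is then to expand $(I+hA)^{-1} = I - hA + h^2 A^2(I+hA)^{-1}$ (Neumann-type remainder), so that $(I+hA)^{-1} = I - hA + \rho(h,A)$ with $\|\rho(h,A)\| \le M_1 h^2$ for a constant $M_1 = M_1(h_0,W)$, using that $\|(I+hA)^{-1}\| \le (1 - h_0 C)^{-1}$ is uniformly bounded on $W$.

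Next I would apply Lemma~\ref{lem:lognorm-conv} to the matrix $-A + \rho(h,A)/h$, or more directly observe that $(I+hA)^{-1} = I + h(-A) + \rho(h,A)$ is exactly of the form handled by that lemma (up to absorbing the $O(h^2)$ remainder $\rho$), hence
\begin{equation*}
\|(I+hA)^{-1}\| = 1 + h\,l(-A) + r_1(h,A), \qquad \|r_1(h,A)\| \le M_2 h^2,
\end{equation*}
with $M_2 = M_2(h_0,W)$; the locally uniform control of the $O(h^2)$ term comes from compactness of $W$ together with the uniform bound on $\|(I+hA)^{-1}\|$. Then taking reciprocals, for $h_0$ small so that $1 + h\,l(-A) + r_1 \ge \tfrac12 > 0$ on $W$, I would write
\begin{equation*}
m(I+hA) = \frac{1}{1 + h\,l(-A) + r_1(h,A)} = 1 - h\,l(-A) + r(h,A),
\end{equation*}
and estimate the remainder via $\frac{1}{1+u} = 1 - u + \frac{u^2}{1+u}$ with $u = h\,l(-A) + r_1(h,A) = O(h)$, so that $|r(h,A)| \le M h^2$ for a suitable $M = M(h_0,W)$, again using $|l(-A)| \le \|A\| \le C$ and the uniform lower bound $1+u \ge \tfrac12$. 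Finally, $-l(-A) = m_l(A)$ by Lemma~\ref{lem:def-ml-ok}, which yields the claimed expansion $m(I+hA) = 1 + h\,m_l(A) + r(h,A)$.

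I expect the only mild subtlety to be bookkeeping of the constants: one must choose a single $h_0$ (possibly smaller than the one originally given, which is harmless since the statement is for $h\in(0,h_0]$ and shrinking $h_0$ only strengthens it) small enough to guarantee simultaneously that $I+hA$ is invertible on $W$, that $\|(I+hA)^{-1}\|$ is uniformly bounded, and that the denominator $1+u$ stays bounded away from zero; then all the $O(h^2)$ constants depend only on $h_0$ and $W$. There is no genuine analytic difficulty here — the result is a routine consequence of Lemma~\ref{lem:lognorm-conv}, the relation $m_l = -l(-\cdot)$, and the elementary reciprocal expansion — so the ``hard part'' is merely organizing these estimates cleanly, which is why, consistent with the other lemmas in this subsection, the full details are relegated to Appendix~\ref{sec:app-lognorm}.
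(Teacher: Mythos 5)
Your proposal is correct and follows essentially the same route as the paper's appendix proof: reduce $m(I+hA)$ to $1/\|(I+hA)^{-1}\|$, expand the inverse as $I-hA+O(h^2)$, apply Lemma~\ref{lem:lognorm-conv} to $\|I-hA\|$, take the reciprocal, and finish with $m_l(A)=-l(-A)$ from Lemma~\ref{lem:def-ml-ok}. You have in fact tracked the $O(h^2)$ constants more explicitly than the paper does, but the chain of ideas is identical.
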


\begin{proof}
See Appendix~\ref{sec:app-ml-norm}.
\end{proof}

%TCIDATA{Version=5.00.0.2606}
%TCIDATA{LaTeXparent=0,0,MMFedit.tex}

\section{Melnikov type method\label{sec:Meln}}

In this section we introduce a Melnikov type method. The difference with the
standard approach is that we do not integrate along the homoclinic orbit.
Instead, we assume that we have bounds on the local parameterizations of the
stable/unstable manifold of the perturbed orbit. These are then propagated
to the section where we measure the distance. This formulation allows us to
verify our assumptions for a given range of perturbations. We do not need to
assume that the perturbation is small enough.

While presenting the method, we make a number of assumptions about the
stable and unstable manifolds. Namely that we have their parameterization,
and that we have bounds on their derivatives. Based on these assumptions we
formulate our results. We emphasize straightaway that we know how to obtain
such bounds. This is the subject of subsequent sections.

In section \ref{sec:method} we present the method; in particular Theorem \ref%
{th:main}, which contains the main result. In section \ref%
{sec:assumptions-verif} we discuss how to verify the assumptions, based on
the bounds on the derivatives of the parametrizations of the stable and
unstable manifolds. How such bounds can be obtained is presented in section %
\ref{sec:ODE-Wcu}.

\subsection{The method\label{sec:method}}

\label{subsec:method} In our treatment of the problem we shall consider the
following formulation of (\ref{eq:ode-perturbation}), in a state space that
is extended to include both the time and the parameter:%
\begin{align}
x^{\prime} & =\pi_{x}f_{\varepsilon}\left( x,y,s\right) ,  \notag \\
y^{\prime} & =\pi_{y}f_{\varepsilon}\left( x,y,s\right) ,  \label{eq:our-ODE}
\\
\varepsilon^{\prime} & =0,  \notag \\
s^{\prime} & =1.  \notag
\end{align}

In the extended phase space coordinates $q=(x,y,\varepsilon ,s),$ we shall
use the notation%
\begin{equation}
q^{\prime }=f(q),  \label{eq:our-ODE-simple}
\end{equation}%
for the ODE (\ref{eq:our-ODE}), where%
\begin{equation*}
f:\mathbb{R}^{3}\times \mathbb{S}^{1}\rightarrow \mathbb{R}^{4}.
\end{equation*}%
We shall write $\Phi _{t}(q)$ for the flow of (\ref{eq:our-ODE-simple}).

We will refer to the cyclic variable $s$ as $s$-time or just a time. There
will be also other `time' occasionally appearing in our discussion, this
will be the time along the solution of the system (\ref{eq:our-ODE-simple}),
we will refer this variable as $t$-time. Given two points on the trajectory
of (\ref{eq:our-ODE-simple}) the $t$-time between them will be the
difference between $s$-times of these two points.

The family of periodic orbits $\gamma_{\varepsilon}(s) $ forms a two
dimensional invariant manifold (with a boundary) for (\ref{eq:our-ODE-simple}%
):%
\begin{equation*}
\Lambda=\left\{ \left( \gamma_{\varepsilon}\left( s\right) ,\varepsilon
,s\right) :\varepsilon\in E,s\in\mathbb{S}^{1}\right\} .
\end{equation*}
(The boundary of $\Lambda$ is $\partial\Lambda=\left\{ \left( \gamma
_{\varepsilon}\left( s\right) ,\varepsilon,s\right) :\varepsilon\in\partial
E,s\in\mathbb{S}^{1}\right\} .$)

For any fixed $\varepsilon\in E$, we shall write 
\begin{equation*}
\Lambda_{\varepsilon}=\left\{ \left( \gamma_{\varepsilon}\left( s\right)
,e,s\right) :e=\varepsilon,s\in\mathbb{S}^{1}\right\} ,
\end{equation*}
to denote the invariant set containing the periodic orbit of (\ref%
{eq:ode-perturbation}), in the extended phase space.

Let $N$ be a set in $\mathbb{R}^{3}\times \mathbb{S}^{1}$ containing $%
\Lambda $. We shall use $W_{\mathrm{loc}}^{s}\left( \Lambda \right) $ and $%
W_{\mathrm{loc}}^{u}\left( \Lambda \right) $ to denote the local stable and
unstable manifolds in $N$, respectively i.e.%
\begin{eqnarray*}
W_{\mathrm{loc}}^{s}\left( \Lambda \right)  &=&\left\{ q:\Phi _{t}(q)\in N%
\text{ for all }t\geq 0\right\} , \\
W_{\mathrm{loc}}^{u}\left( \Lambda \right)  &=&\left\{ q:\Phi _{t}(q)\in N%
\text{ for all }t\leq 0\right\} .
\end{eqnarray*}%
(Since the set $N$ will be fixed, we do not include it in our notations for
the local manifolds.) We assume that in the neighborhood $N$ we can
parameterize $W_{\mathrm{loc}}^{u}\left( \Lambda \right) $ by a function%
\begin{equation*}
w^{u}:\left[ -r_{u},r_{u}\right] \times E\times \mathbb{S}^{1}\rightarrow 
\mathbb{R}^{3}\times \mathbb{S}^{1},
\end{equation*}%
where $r_{u}\in \mathbb{R}^{+}$. We also assume that $W_{\mathrm{loc}%
}^{s}\left( \Lambda \right) $ is parameterized by%
\begin{equation*}
w^{s}:\left[ -r_{s},r_{s}\right] \times E\times \mathbb{S}^{1}\rightarrow 
\mathbb{R}^{3}\times \mathbb{S}^{1},
\end{equation*}%
for $r_{s}\in \mathbb{R}^{+}$. We assume that our parameterizations satisfy%
\begin{equation}
\pi _{\varepsilon ,s}w^{\iota }\left( r,\varepsilon ,s\right) =\left(
\varepsilon ,s\right) ,\qquad \text{for }\iota \in \left\{ s,u\right\} .
\label{eq:wi-proj}
\end{equation}%
We shall use notations $W^{u}\left( \Lambda \right) ,$ $W^{s}(\Lambda )$ for
the unstable and stable manifolds of $\Lambda $, respectively.

The existence of the manifolds within the set $N$, together with the fact
that they are graphs of the functions $w^u$ and $w^s$, will follow from our
construction. Namely, in sections \ref{sec:wcu-maps} and \ref{sec:ODE-Wcu}
we present a detailed method which ensures, using constructive arguments,
that above assumptions are fulfilled within an explicitly given set $N$.

Let $\Sigma \subset R^{3}\times S^{1}$ be a $3$-dimensional section for (\ref%
{eq:our-ODE-simple}), such that for any $q\in w^{u}\left( (0,r_{u}]\times
E\times \mathbb{S}^{1}\right) $ the first intersection for time $t>0$ of the
trajectory $\Phi _{t}\left( q\right) $ with $\Sigma $ is transversal. We
also assume that for any $q\in w^{s}\left( (0,r_{s}]\times E\times \mathbb{S}%
^{1}\right) $ the first intersection for time $t<0$ of the trajectory $\Phi
_{t}\left( q\right) $ with $\Sigma $ is transversal. For simplicity, without
loss of generality, we shall assume that $\Sigma =\left\{ y=0\right\} $,
hence the coordinates on $\Sigma $ are $\left( x,\varepsilon ,s\right) $
(see Figure \ref{fig:WuWs}).

\begin{figure}[tbp]
\begin{center}
\includegraphics[height=3cm]{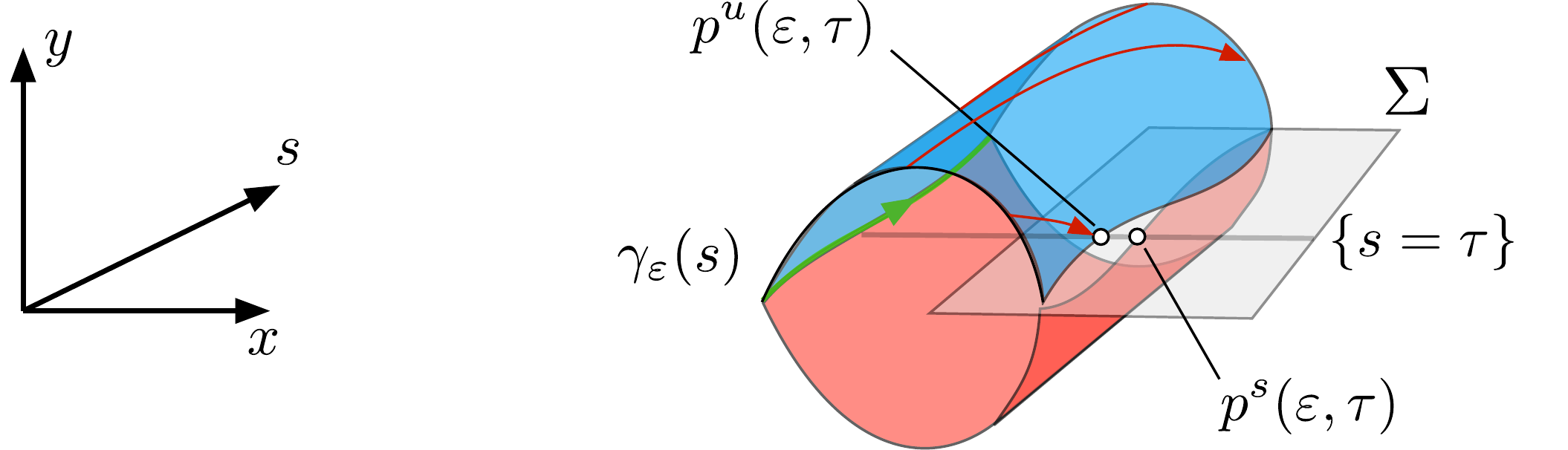}
\end{center}
\caption{The perturbed orbit $\protect\gamma _{\protect\varepsilon }(s)$ in
green (which coincides with $\Lambda _{\protect\varepsilon }$), its unstable
manifold in blue and stable manifold in red. The two points on $\Sigma
=\{y=0\}$ are $p^{u}\left( \protect\varepsilon ,\protect\tau \right)
:=P^{u}\left( w^{u}\left( r_{u},\protect\varepsilon ,\protect\kappa %
^{u}\left( \protect\varepsilon ,\protect\tau \right) \right) \right) $ and $%
p^{s}\left( \protect\varepsilon ,\protect\tau \right) :=P^{s}\left(
w^{s}\left( r_{s},\protect\varepsilon ,\protect\kappa ^{s}\left( \protect%
\varepsilon ,\protect\tau \right) \right) \right) $. In red, we have the
trajectory along the solution of the ODE, which leads to $p^{u}\left( 
\protect\varepsilon ,\protect\tau \right) $. The (signed) distance between $%
p^{u}\left( \protect\varepsilon ,\protect\tau \right) $ and $p^{s}\left( 
\protect\varepsilon ,\protect\tau \right) $ is the $\protect\delta \left( 
\protect\varepsilon ,\protect\tau \right) $.}
\label{fig:WuWs}
\end{figure}
\begin{figure}[tbp]
\caption{}
\begin{center}
\includegraphics[height=2.7cm]{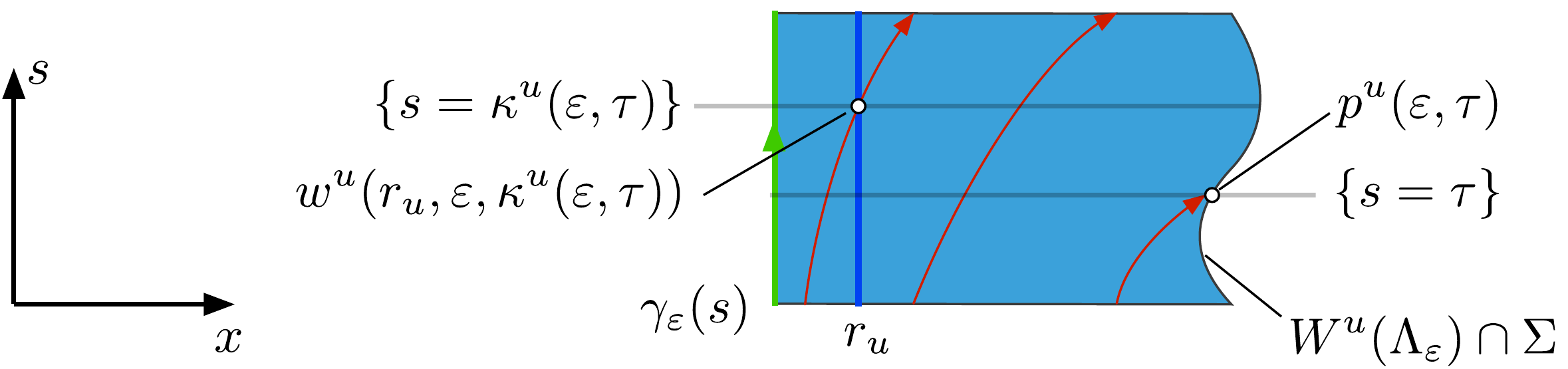}
\end{center}
\caption{The perturbed orbit $\protect\gamma _{\protect\varepsilon }(s)$
(which coincides with $\Lambda _{\protect\varepsilon }$) in green and its
unstable manifold in blue, projected onto the $x,s$ coordinates. The curled
right edge of the blue region is the intersection of the unstable manufold
with $\Sigma $. If we start from the point $w^{u}\left( r_{u},\protect%
\varepsilon ,\protect\kappa ^{u}\left( \protect\varepsilon ,\protect\tau %
\right) \right) $, which is at an $r_{u}$ distance from $\protect\gamma _{%
\protect\varepsilon }(s)$, and whose $s$-time is $\protect\kappa ^{u}\left( 
\protect\varepsilon ,\protect\tau \right) $, then we will reach $p^{u}\left( 
\protect\varepsilon ,\protect\tau \right) :=P^{u}\left( w^{u}\left( r_{u},%
\protect\varepsilon ,\protect\kappa ^{u}\left( \protect\varepsilon ,\protect%
\tau \right) \right) \right) $ along a trajectory of the ODE (depicted in
red). }
\label{fig:Wu}
\end{figure}

Let $\tau ^{u}(q)$ and $\tau ^{s}\left( q\right) $ stand for 
\begin{align}
\tau ^{u}\left( q\right) & =\pi _{s}q+\inf \left\{ t>0:\Phi _{t}\left(
q\right) \in \Sigma \right\} ,  \label{eq:tau-def} \\
\tau ^{s}\left( q\right) & =\pi _{s}q+\sup \left\{ t<0:\Phi _{t}\left(
q\right) \in \Sigma \right\} .  \notag
\end{align}%
Therefore $\tau ^{u}(q)$ is the $s$-time coordinate of the point from the
first intersection of $\Sigma $ with the forward trajectory of point $q$.
Then $\tau ^{u}(q)-\pi _{s}q$ is the $t$-time needed for $q$ to reach the
section $\Sigma $. For the $\tau ^{s}(q)$ we have analogous interpretation.

Let $P^{u}\ $and $P^{s}$ be maps defined as%
\begin{align*}
P^{u}\left( q\right) & =\Phi _{\tau ^{u}\left( q\right) -\pi _{s}q}\left(
q\right) , \\
P^{s}\left( q\right) & =\Phi _{\tau ^{s}\left( q\right) -\pi _{s}q}\left(
q\right) .
\end{align*}%
The domains of $P^{u}\ $and $P^{s}$ are subsets of $\mathbb{R}^{2}\times
E\times \mathbb{S}^{1}$, which contain $w^{u}\left( \left( 0,r_{u}\right]
\times E\times \mathbb{S}^{1}\right) $ and $w^{s}\left( \left( 0,r_{s}\right]
\times E\times \mathbb{S}^{1}\right) $, respectively. Observe that%
\begin{equation}
\pi _{s}P^{u}\left( q\right) =\tau ^{u}\left( q\right) \qquad \text{and}%
\qquad \pi _{s}P^{s}\left( q\right) =\tau ^{s}\left( q\right) .
\label{eq:Ps-tau-relation}
\end{equation}%
We shall assume that for any $\tau $ we can solve the following implicit
equations for functions $\kappa ^{u},\kappa ^{s}:E\times \mathbb{S}%
^{1}\rightarrow \mathbb{S}^{1}$: 
\begin{align}
\tau ^{u}\left( w^{u}\left( r_{u},\varepsilon ,\kappa ^{u}\left( \varepsilon
,\tau \right) \right) \right) & =\tau ,  \label{eq:kappa-def1} \\
\tau ^{s}\left( w^{s}\left( r_{s},\varepsilon ,\kappa ^{s}\left( \varepsilon
,\tau \right) \right) \right) & =\tau .  \label{eq:kappa-def2}
\end{align}

Function $\kappa ^{u}(\varepsilon ,\tau )$ gives the $s$-time of the point
on the unstable manifold with the unstable parameter $r_{u}$ that reaches
the section $\Sigma $ in the $s$-time equal to $\tau $ (see Figure \ref%
{fig:Wu}).

The questions related to the solvability of (\ref{eq:kappa-def1}),(\ref%
{eq:kappa-def2}) are discussed in Remark \ref{rem:kappa-sol}. We define the
distance function $\delta :E\times \mathbb{S}^{1}\rightarrow \mathbb{R}$: 
\begin{equation}
\delta \left( \varepsilon ,\tau \right) :=\pi _{x}p^{u}\left( \varepsilon
,\tau \right) -\pi _{x}p^{s}\left( \varepsilon ,\tau \right) ,
\label{eq:delta-def}
\end{equation}%
where%
\begin{equation}
p^{\iota }\left( \varepsilon ,\tau \right) :=P^{\iota }\left( w^{\iota
}\left( r_{\iota },\varepsilon ,\kappa ^{\iota }\left( \varepsilon ,\tau
\right) \right) \right) \qquad \qquad \text{for }\iota \in \left\{
s,u\right\} .  \label{eq:p-e-tau-def}
\end{equation}%
The $\delta $ will play the key role in our derivations. It will turn out
that $\delta (\varepsilon ,\tau )$ measures the (signed) distance between
the intersections of $W^{u}(\Lambda _{\varepsilon })\cap \{s=\tau \}$ and $%
W^{s}(\Lambda _{\varepsilon })\cap \{s=\tau \}$ on $\Sigma $.

We now formulate our main result.

\begin{theorem}
\label{th:main}Assume that there exists $\tau _{1},\tau _{2}\in \mathbb{S}%
^{1}$ such that for any $\varepsilon \in E$%
\begin{equation}
\frac{\partial }{\partial \varepsilon }\delta \left( \varepsilon ,\tau
_{1}\right) <0,\qquad \frac{\partial }{\partial \varepsilon }\delta \left(
\varepsilon ,\tau _{2}\right) >0.  \label{eq:topol-transv-cond}
\end{equation}%
Then for any $\varepsilon \in E\setminus \{0\}$ there exists $\tau ^{\ast
}\left( \varepsilon \right) \in \left( \tau _{1},\tau _{2}\right) $ such
that $W^{u}\left( \Lambda _{\varepsilon }\right) $ and $W^{s}\left( \Lambda
_{\varepsilon }\right) $ intersect at a point $q\left( \varepsilon \right)
\in \Sigma $, for which $\pi _{\left( \varepsilon ,\tau \right) }q\left(
\varepsilon \right) =\left( \varepsilon ,\tau \right) .$

Moreover, if in addition 
\begin{equation}
\frac{\partial ^{2}}{\partial \tau \partial \varepsilon }\delta \left(
\varepsilon ,\tau \right) >0,\qquad \text{for any }\varepsilon \in E\text{
and }\tau \in \left( \tau _{1},\tau _{2}\right) ,  \label{eq:transv-cond}
\end{equation}%
then $q(\varepsilon )$ is uniquely defined and for any fixed $\varepsilon
\in E\setminus \{0\}$, the manifolds $W^{u}\left( \Lambda _{\varepsilon
}\right) $ and $W^{s}\left( \Lambda _{\varepsilon }\right) $ intersect
transversally at $q\left( \varepsilon \right) $; the transversality is
considered in the $x,y,s$ coordinates.
\end{theorem}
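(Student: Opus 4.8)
The plan is to follow almost verbatim the argument sketched in subsection~\ref{sec:main-idea}, but with the extended state space taken into account so that the final ``transversality in the $x,y,s$ coordinates'' claim is actually justified. First I would record the crucial consequence of the fact that $\delta$ is $C^2$ and that $\delta(0,\tau)=0$ for all $\tau$: for every $\varepsilon\in E\setminus\{0\}$ and every $\tau$,
\begin{equation*}
\frac{\delta(\varepsilon,\tau)}{\varepsilon}=\frac{1}{\varepsilon}\int_0^\varepsilon\frac{\partial\delta}{\partial\varepsilon}(e,\tau)\,de,
\end{equation*}
so this quotient lies between $\inf_{e\in E}\partial_\varepsilon\delta(e,\tau)$ and $\sup_{e\in E}\partial_\varepsilon\delta(e,\tau)$. (The $C^2$ regularity of $\delta$ itself needs a sentence: it follows from smooth dependence of the flow $\Phi_t$ on initial conditions, the transversality of the intersections with $\Sigma$ which makes $\tau^u,\tau^s$ and hence $P^u,P^s$ smooth via the implicit function theorem, and the solvability assumption for $\kappa^u,\kappa^s$, again via the implicit function theorem; this is the content deferred to Remark~\ref{rem:kappa-sol} and the later sections, so I would simply cite it.)

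Next, from hypothesis~(\ref{eq:topol-transv-cond}) the quotient $\delta(\varepsilon,\tau_1)/\varepsilon$ is strictly negative and $\delta(\varepsilon,\tau_2)/\varepsilon$ is strictly positive for each fixed $\varepsilon\in E\setminus\{0\}$. Applying the Bolzano (intermediate value) theorem to the continuous function $\tau\mapsto\delta_\varepsilon(\tau)$ on $[\tau_1,\tau_2]$ (using that $\delta_\varepsilon(\tau_1)$ and $\delta_\varepsilon(\tau_2)$ have opposite signs, since they are $\varepsilon$ times quantities of opposite sign) produces $\tau^*(\varepsilon)\in(\tau_1,\tau_2)$ with $\delta_\varepsilon(\tau^*(\varepsilon))=0$. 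I would then unwind the definition of $\delta$: $\delta(\varepsilon,\tau)=0$ means $\pi_x p^u(\varepsilon,\tau)=\pi_x p^s(\varepsilon,\tau)$, while by~(\ref{eq:wi-proj}), the defining equations~(\ref{eq:kappa-def1})--(\ref{eq:kappa-def2}) and~(\ref{eq:Ps-tau-relation}) we also have $\pi_\varepsilon p^u=\pi_\varepsilon p^s=\varepsilon$ and $\pi_s p^u=\pi_s p^s=\tau$; since $\Sigma=\{y=0\}$ the $y$-coordinates agree as well. Hence $p^u(\varepsilon,\tau^*(\varepsilon))=p^s(\varepsilon,\tau^*(\varepsilon))=:q(\varepsilon)$ is a genuine intersection point of $W^u(\Lambda_\varepsilon)$ and $W^s(\Lambda_\varepsilon)$ on $\Sigma$, with the stated projection property.

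For the second part I would pass to the rescaled function $\rho_\varepsilon(\tau):=\delta_\varepsilon(\tau)/\varepsilon$ and differentiate under the integral sign:
\begin{equation*}
\frac{d\rho_\varepsilon}{d\tau}(\tau)=\frac{1}{\varepsilon}\int_0^\varepsilon\frac{\partial^2\delta}{\partial\tau\partial\varepsilon}(e,\tau)\,de,
\end{equation*}
which by~(\ref{eq:transv-cond}) is strictly positive for every $\varepsilon\in E\setminus\{0\}$ and every $\tau\in(\tau_1,\tau_2)$. So $\rho_\varepsilon$ is strictly increasing on $(\tau_1,\tau_2)$, its zero $\tau^*(\varepsilon)$ is unique, and $\frac{d\delta_\varepsilon}{d\tau}(\tau^*(\varepsilon))=\varepsilon\,\frac{d\rho_\varepsilon}{d\tau}(\tau^*(\varepsilon))\neq0$. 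Finally I must translate the condition $\partial_\tau\delta_\varepsilon\neq0$ at the intersection into transversality of $W^u(\Lambda_\varepsilon)$ and $W^s(\Lambda_\varepsilon)$ in the $(x,y,s)$ coordinates: both manifolds, intersected with the slice $\{\varepsilon=\text{const}\}$ and carried to $\Sigma$, are graphs over the $s$-circle of the scalar functions $\tau\mapsto\pi_x p^u(\varepsilon,\tau)$ and $\tau\mapsto\pi_x p^s(\varepsilon,\tau)$ respectively (this is where (\ref{eq:wi-proj}), (\ref{eq:kappa-def1})--(\ref{eq:kappa-def2}) and (\ref{eq:Ps-tau-relation}) are used: they guarantee that $p^u$ and $p^s$ are parameterized precisely by $(\varepsilon,\tau)$), so their tangent lines inside the two-dimensional section $\{y=0,\ \varepsilon=\text{const}\}$ are distinct exactly when the derivative of their difference, namely $\partial_\tau\delta_\varepsilon$, is nonzero; pulling this back by the (transversal, hence diffeomorphic onto its image) flow maps $P^u,P^s$ gives transversality of the one-dimensional fibers $W^u(\Lambda_\varepsilon)$, $W^s(\Lambda_\varepsilon)$ inside $\mathbb R^2\times\mathbb S^1$, i.e. in the $(x,y,s)$ coordinates.

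The routine part is the Bolzano/monotonicity bookkeeping; the step I expect to demand the most care is the last one—cleanly arguing that ``$\partial_\tau\delta_\varepsilon\neq0$'' is equivalent to transversality of the two curves in the $(x,y,s)$ space. One has to be careful that $W^u(\Lambda_\varepsilon)$ and $W^s(\Lambda_\varepsilon)$ are one-dimensional (fibers over a point of $\Lambda_\varepsilon$) while $\Sigma\cap\{\varepsilon=\text{const}\}$ is two-dimensional, so ``transversal intersection of two curves in a plane'' here means their tangent lines span the plane, which is equivalent to the tangent lines being distinct, which—once everything is written as a graph over $s$—is exactly $\partial_\tau(\pi_x p^u-\pi_x p^s)\neq0$. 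I would also make explicit that $P^u$ restricted to a neighborhood of $w^u(r_u,\varepsilon,\kappa^u(\varepsilon,\tau))$ in $W^u(\Lambda)$ is a diffeomorphism onto a neighborhood of $q(\varepsilon)$ in $W^u(\Lambda)\cap\Sigma$ (by transversality of the trajectory with $\Sigma$), so that transversality on $\Sigma$ pulls back to transversality of the full fibers, and symmetrically for $P^s$.
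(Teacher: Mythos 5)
Your proof of the existence part (the integral representation
\begin{equation*}
\delta(\varepsilon,\tau)=\varepsilon\int_0^1\frac{\partial\delta}{\partial\varepsilon}(x\varepsilon,\tau)\,dx,
\end{equation*}
sign argument, Bolzano, and unwinding $\delta=0$ into $p^u=p^s$) matches the paper's argument essentially verbatim, including the rescaling by $\varepsilon$ to get uniqueness of $\tau^*(\varepsilon)$ from~(\ref{eq:transv-cond}). So that part is fine.

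For the final ``transversality in the $x,y,s$ coordinates'' step you take a genuinely different route, and this is where you should tighten things. The paper's proof is more direct: it exhibits three explicit tangent vectors, $w=f(q(\varepsilon))\in T_qW^u(\Lambda)\cap T_qW^s(\Lambda)$ with $\pi_yw\neq 0$, $\pi_sw=1$, and $v^\iota=\partial_\tau q^\iota(\varepsilon,\tau)$ with $\pi_yv^\iota=\pi_\varepsilon v^\iota=0$, $\pi_\tau v^\iota=1$, then checks that $\mathrm{span}(w,v^u,v^s)=\mathbb{R}^2\times\{0\}\times\mathbb{R}$ precisely when $\partial_\tau\delta\neq 0$. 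Since $\{w,v^\iota\}$ spans $T_qW^\iota(\Lambda_\varepsilon)$ (both 2-dimensional), the sum of tangent spaces is all of $\mathbb{R}^3$ and transversality follows without any ``pull back'' discussion. Your route via curves in $\Sigma\cap\{\varepsilon=\mathrm{const}\}$ is correct in spirit, but it requires a linear algebra step you never spell out: two 2-planes $U_1,U_2\subset\mathbb{R}^3$ transversal to a 2-plane $S$ and containing a common vector $w\notin S$ satisfy $U_1+U_2=\mathbb{R}^3$ iff $(U_1\cap S)\neq(U_2\cap S)$. The flow vector $w=f(q)$ is exactly what makes this equivalence valid, and you should say so. The paper's construction packages this observation cleanly.

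Finally, one genuine error to correct: you write that ``$W^u(\Lambda_\varepsilon)$ and $W^s(\Lambda_\varepsilon)$ are one-dimensional (fibers over a point of $\Lambda_\varepsilon$).'' They are not. $\Lambda_\varepsilon$ is a 1-dimensional periodic orbit in the 3-dimensional phase space $(x,y,s)$, so $W^u(\Lambda_\varepsilon)$ and $W^s(\Lambda_\varepsilon)$ are 2-dimensional; the 1-dimensional objects are the individual fibers $W^u_z$, $W^s_z$. As written, ``transversal intersection of one-dimensional manifolds inside $\mathbb{R}^2\times\mathbb{S}^1$'' is impossible (one would need their tangent lines to span a 3-space). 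The claim only makes sense after you correct the dimensions to two; once that is done, your argument, supplemented by the linear-algebra lemma above, gives the same conclusion as the paper.
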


\begin{proof}
Let us fix $\varepsilon$ and $\tau,$ and define two points 
\begin{equation}
q^{\iota}=q^{\iota}\left( \varepsilon,\tau\right) =P^{\iota}\left(
w^{\iota}\left( r_{\iota},\varepsilon,\kappa^{\iota}\left( \varepsilon
,\tau\right) \right) \right) \qquad\text{for }\iota\in\left\{ u,s\right\} .
\label{eq:qsu-def}
\end{equation}
By definition $q^{u},q^{s}\in\Sigma$. Moreover, by definition of $%
\kappa^{\iota}\left( \varepsilon,\tau\right) $ (see (\ref{eq:kappa-def1}-- %
\ref{eq:kappa-def2})) 
\begin{equation*}
\pi_{\left( \varepsilon,s\right) }q^{u}=\left( \varepsilon,\tau\right)
=\pi_{\left( \varepsilon,s\right) }q^{s}.
\end{equation*}
Moreover, by the definition of $\delta$, we also have 
\begin{equation*}
\pi_{x}\left( q^{u}-q^{s}\right) =\delta\left( \varepsilon,\tau\right) .
\end{equation*}
We therefore see that to establish that $q^{u}=q^{s}$ it is sufficient to
check that $\delta\left( \varepsilon,\tau\right) =0$.

If $\varepsilon=0$, then, since for the unperturbed problem we have a
homoclinic orbit, for any $\tau\in\mathbb{S}^{1}$%
\begin{equation*}
\delta( \varepsilon=0,\tau) =0.
\end{equation*}

We have 
\begin{align}
\delta (\varepsilon ,\tau )& =\delta \left( 0,\tau \right) +\int_{0}^{1}%
\frac{d}{dx}\delta \left( x\varepsilon ,\tau \right) dx  \notag \\
& =\varepsilon \int_{0}^{1}\frac{\partial }{\partial \varepsilon }\delta
\left( x\varepsilon ,\tau \right) dx.  \label{eq:delta-integral}
\end{align}%
From our assumptions it therefore follows that for any $\varepsilon \in
E\setminus \{0\}$%
\begin{equation*}
\delta \left( \varepsilon ,\tau _{1}\right) <0,\qquad\qquad \delta \left(
\varepsilon ,\tau _{2}\right) >0.
\end{equation*}%
By the Bolzano intermediate value theorem (applied to $\tau \rightarrow
\delta \left( \varepsilon ,\tau \right) $), for any $\varepsilon \in E$
there needs to be a $\tau ^{\ast }\left( \varepsilon \right) $ in $\left(
\tau _{1},\tau _{2}\right) $, such that%
\begin{equation*}
\delta \left( \varepsilon ,\tau ^{\ast }\left( \varepsilon \right) \right)
=\pi _{x}\left( q^{u}\left( \varepsilon ,\tau ^{\ast }\left( \varepsilon
\right) \right) -q^{s}\left( \varepsilon ,\tau ^{\ast }\left( \varepsilon
\right) \right) \right) =0,
\end{equation*}%
hence the manifolds intersect at $q^{u}\left( \varepsilon ,\tau ^{\ast
}\left( \varepsilon \right) \right) =q^{s}\left( \varepsilon ,\tau ^{\ast
}\left( \varepsilon \right) \right) .$

We now prove the transversality. As a consequence of the transversality we
obtain the uniqueness of $q(e)$. Let us fix $e\in E\setminus \{0\}$. Observe
that since 
\begin{equation*}
\frac{\partial }{\partial \tau }\delta \left( e,\tau \right) =e\int_{0}^{1}%
\frac{\partial ^{2}}{\partial \tau \partial \varepsilon }\delta \left(
xe,\tau \right) dx\neq 0,\quad \tau \in (\tau _{1},\tau _{2})
\end{equation*}%
the intersection parameter $\tau ^{\ast }(\varepsilon )$ is uniquely
defined. Let $q(e)$ denotes the intersection point 
\begin{equation*}
q(e)=q^{u}(e,\tau ^{\ast }(e))=q^{s}(e,\tau ^{\ast }(e)).
\end{equation*}

We consider the transversality in the coordinates $y,x,s$.

Let $w=f\left( q\left( e\right) \right) $. Since $q\left( e\right) \in
W^{s}\left( \Lambda\right) \cap W^{u}\left( \Lambda\right) $, we have%
\begin{equation*}
w\in T_{q\left( e\right) }W^{s}\left( \Lambda\right) \qquad\text{and\qquad }%
w\in T_{q\left( e\right) }W^{u}\left( \Lambda\right) .
\end{equation*}
Since the intersections of $W^{s}\left( \Lambda\right) $ and $W^{u}\left(
\Lambda\right) $ with $\Sigma$ are transversal, and since $\Sigma=\left\{
y=0\right\} ,$ it follows that 
\begin{equation}
w=\left( 
\begin{array}{c}
\pi_{x}w \\ 
\pi_{y}w \\ 
\pi_{\varepsilon}w \\ 
\pi_{s}w%
\end{array}
\right) \qquad\text{with }\pi_y w\neq0, \ \pi_s w=1.  \label{eq:tangent-w}
\end{equation}

We now consider additional two vectors $v^{\iota}\in T_{q\left( e\right)
}W^{\iota}\left( \Lambda\right) ,$ for $\iota\in\left\{ s,u\right\} $
defined as 
\begin{equation*}
v^{\iota}=\frac{\partial}{\partial\tau}q^{\iota}\left( e,\tau\right) .
\end{equation*}
Since by construction $\pi_{\tau}q^{\iota}\left( e,\tau\right) =\tau,$ $%
\pi_{\varepsilon}q^{\iota}\left( e,\tau\right) =e$ and $\pi_{y}q^{\iota
}\left( e,\tau\right) =0,$ we have%
\begin{equation}
v^{\iota}=\left( 
\begin{array}{c}
\pi_{x}v^{\iota} \\ 
\pi_{y}v^{\iota} \\ 
\pi_{\varepsilon}v^{\iota} \\ 
\pi_{\tau}v^{\iota}%
\end{array}
\right) =\left( 
\begin{array}{c}
\pi_{x}\frac{\partial}{\partial\tau}q^{\iota}\left( e,\tau\right) \\ 
0 \\ 
0 \\ 
1%
\end{array}
\right) .  \label{eq:tangent-vs}
\end{equation}
To show transversality in the $x,y,s$ coordinates, it is sufficient to show
that%
\begin{equation*}
\mathrm{span}\left( w,v^{s},v^{u}\right) =\mathbb{R}^{2}\times\left\{
0\right\} \times\mathbb{R}.
\end{equation*}
Looking at (\ref{eq:tangent-w}--\ref{eq:tangent-vs}) we see that this will
be the case if 
\begin{equation*}
\frac{\partial}{\partial\tau}\pi_{x}q^{u}\left( e,\tau\right) -\frac {%
\partial}{\partial\tau}\pi_{x}q^{s}\left( e,\tau\right) \neq0.
\end{equation*}
In other words, by (\ref{eq:delta-def}) and (\ref{eq:qsu-def}), we need to
show that 
\begin{equation*}
\frac{\partial}{\partial\tau}\delta\left( e,\tau\right) \neq0.
\end{equation*}
From (\ref{eq:delta-integral}) it follows that 
\begin{equation*}
\frac{\partial}{\partial\tau}\delta\left( e,\tau\right) =e\int_{0}^{1}\frac{%
\partial^{2}}{\partial\tau\partial\varepsilon}\delta\left( xe,\tau\right) dx.
\end{equation*}
From our assumptions we have that $\frac{\partial^{2}}{\partial\tau
\partial\varepsilon}\delta\left( \varepsilon,\tau\right) >0,$ hence from
above equation follows that $\frac{\partial}{\partial\tau}\delta\left(
e,\tau\right) \neq0$. This concludes the proof of the transversality.
\end{proof}

\begin{remark}
Theorem \ref{th:main} follows along the standard lines of Melnikov-type
arguments. The novelty is that we formulate our assumptions so that we
obtain the intersection for all $\varepsilon \in E\setminus \{0\}$, and not
only for \textquotedblleft sufficiently small" $\varepsilon $. The main
difficulty does not lie in the proof of this theorem, which is
straightforward, but in the ability to verify its assumptions. The
subsequent sections will be devoted to showing how (\ref%
{eq:topol-transv-cond}) and (\ref{eq:transv-cond}) can be validated using
(rigorous) computer assisted computations.
\end{remark}

\subsection{Verification of assumptions\label{sec:assumptions-verif}}

To apply Theorem \ref{th:main} we need to be able to obtain bounds for $%
\frac{\partial }{\partial \varepsilon }\delta \left( \varepsilon ,\tau
\right) $ and $\frac{\partial ^{2}}{\partial \tau \partial \varepsilon }%
\delta \left( \varepsilon ,\tau \right) $. Our objective will be to obtain
such bounds using rigorous, interval-arithmetic-based, computer assisted
computations. In this section we will show that the key are the bounds for $%
Dw^{\iota }$ and $D^{2}w^{\iota },\ $where $\iota \in \left\{ u,s\right\} $,
and that other estimates follow with relative ease.

Throughout the section we use the notation $\iota$ to stand for an index
from the set $\left\{ u,s\right\} $.

In our implementation we use the CAPD\footnote{%
Computer Assisted Proofs in Dynamics: http://capd.ii.uj.edu.pl/} package.
This package allows for the computation of derivatives (of a prescribed
order) of Poincar\'{e} maps of flows induced by ODEs. We therefore start
from a comfortable assumption that for a given set $U\subset \mathbb{R}%
^{2}\times E\times \mathbb{S}^{1}$ the bounds on $P^{\iota }\left( U\right) $%
, $DP^{\iota }\left( U\right) ,$ and $D^{2}P^{\iota }\left( U\right) $ are
automatically computed by the CAPD package \cite{WZ},\cite{Z}.

To simplify the notation, we consider%
\begin{equation}
g^{\iota }\left( \varepsilon ,\tau ,s\right) =\pi _{s}P^{\iota }\left(
w^{\iota }\left( r_{\iota },\varepsilon ,s\right) \right) -\tau \qquad \text{%
for }\iota \in \left\{ u,s\right\} ,  \label{eq:g1}
\end{equation}%
Since $\kappa ^{\iota }\left( \varepsilon ,\tau \right) $ is a solution of%
\begin{equation*}
g^{\iota }\left( \varepsilon ,\tau ,\kappa ^{\iota }\left( \varepsilon ,\tau
\right) \right) =0,
\end{equation*}
the $g^{\iota }$ will be used to find $\frac{\partial \kappa ^{\iota }}{%
\partial \varepsilon }$, $\frac{\partial \kappa ^{\iota }}{\partial \tau }$, 
$\frac{\partial ^{2}\kappa ^{\iota }}{\partial \tau \partial \varepsilon }$
using implicit differentiation.

\begin{remark}
\label{rem:kappa-sol}Let us assume that $\varepsilon =0$. We are then in the
setting of an autonomous ODE. Then $\tau ^{\iota }\left( w^{\iota }\left(
r_{\iota },\varepsilon =0,s\right) \right) =s+\omega ^{\iota }$ for some
fixed $\omega ^{\iota }\in \mathbb{R}$, and therefore $\kappa ^{\iota
}\left( \varepsilon =0,\tau \right) =\tau -\omega ^{\iota }$ is well
defined. Also, by (\ref{eq:Ps-tau-relation}), 
\begin{equation*}
\frac{\partial }{\partial s}g^{\iota }\left( \varepsilon =0,\tau ,s\right)
=1,\qquad \text{for any }\tau \in \mathbb{S}^{1},
\end{equation*}%
which means that we can apply the implicit function theorem for $g^{\iota
}=0 $ to obtain existence of $\kappa ^{\iota }\left( \varepsilon ,\tau
\right) ,$ for sufficiently small $\varepsilon \geq 0$.
\end{remark}

We can now differentiate $g^{\iota }$ to obtain (below we omit the
dependence of $g$ and $\kappa $ on $\iota $ to simplify notations) 
\begin{align}
\frac{d}{d\varepsilon }g(\varepsilon ,\tau ,\kappa (\varepsilon ,\tau ))& =%
\frac{\partial g}{\partial \varepsilon }+\frac{\partial g}{\partial s}\frac{%
\partial \kappa }{\partial \varepsilon },  \label{eq:partials-of-g-1} \\
\frac{d}{d\tau }g(\varepsilon ,\tau ,\kappa (\varepsilon ,\tau ))& =\frac{%
\partial g}{\partial \tau }+\frac{\partial g}{\partial s}\frac{\partial
\kappa }{\partial \tau },  \label{eq:partials-of-g} \\
\frac{d}{d\varepsilon }\frac{d}{d\tau }g(\varepsilon ,\tau ,\kappa
(\varepsilon ,\tau ))& =\frac{\partial ^{2}g}{\partial \varepsilon \partial
\tau }+\frac{\partial ^{2}g}{\partial \varepsilon \partial s}\frac{\partial
\kappa }{\partial \tau }+\left( \frac{\partial ^{2}g}{\partial s\partial
\tau }+\frac{\partial ^{2}g}{\partial ^{2}s}\frac{\partial \kappa }{\partial
\tau }\right) \frac{\partial \kappa }{\partial \varepsilon }+\frac{\partial g%
}{\partial s}\frac{\partial ^{2}\kappa }{\partial \varepsilon \partial \tau }%
.  \label{eq:partials-of-g-2}
\end{align}

To compute $\frac{\partial \kappa ^{\iota }}{\partial \varepsilon }$, $\frac{%
\partial \kappa ^{\iota }}{\partial \tau }$, $\frac{\partial ^{2}\kappa
^{\iota }}{\partial \tau \partial \varepsilon }$ we consider 
\begin{align}
\frac{d}{d\varepsilon }g^{\iota }(\varepsilon ,\tau ,\kappa ^{\iota
}(\varepsilon ,\tau ))& =0,  \label{eq:impl-for-kappa-e} \\
\frac{d}{d\tau }g^{\iota }(\varepsilon ,\tau ,\kappa ^{\iota }(\varepsilon
,\tau ))& =0,  \label{eq:impl-for-kappa-tau} \\
\frac{d^{2}}{d\tau d\varepsilon }g^{\iota }(\varepsilon ,\tau ,\kappa
^{\iota }(\varepsilon ,\tau ))& =0.  \label{eq:impl-for-kappa-e-tau}
\end{align}%
From (\ref{eq:partials-of-g-1}) together with (\ref{eq:impl-for-kappa-e}),
and from (\ref{eq:partials-of-g}) together with (\ref{eq:impl-for-kappa-tau}%
), we obtain 
\begin{equation}
\frac{\partial \kappa ^{\iota }}{\partial \varepsilon }\left( \varepsilon
,\tau \right) =-\frac{\frac{\partial g^{\iota }}{\partial \varepsilon }%
(\varepsilon ,\tau ,\kappa ^{\iota }(\varepsilon ,\tau ))}{\frac{\partial
g^{\iota }}{\partial s}(\varepsilon ,\tau ,\kappa ^{\iota }(\varepsilon
,\tau ))},.  \label{eq:kappa-der11}
\end{equation}%
and%
\begin{equation}
\frac{\partial \kappa }{\partial \tau }\left( \varepsilon ,\tau \right) =%
\frac{-\frac{\partial g^{\iota }}{\partial \tau }(\varepsilon ,\tau ,\kappa
^{\iota }(\varepsilon ,\tau ))}{\frac{\partial g^{\iota }}{\partial s}%
(\varepsilon ,\tau ,\kappa ^{\iota }(\varepsilon ,\tau ))}=\frac{-1}{\frac{%
\partial g^{\iota }}{\partial s}(\varepsilon ,\tau ,\kappa ^{\iota
}(\varepsilon ,\tau ))}  \label{eq:kappa-der12}
\end{equation}

We note that from (\ref{eq:g1}) follows that 
\begin{equation*}
\frac{\partial ^{2}g^{\iota }}{\partial \varepsilon \partial \tau }=\frac{%
\partial ^{2}g^{\iota }}{\partial s\partial \tau }=0.
\end{equation*}%
This means that from (\ref{eq:partials-of-g-2}),(\ref%
{eq:impl-for-kappa-e-tau}) we obtain 
\begin{equation}
\frac{\partial ^{2}\kappa ^{\iota }}{\partial \varepsilon \partial \tau }=%
\frac{-1}{\frac{\partial g^{\iota }}{\partial s}}\left( \frac{\partial
^{2}g^{\iota }}{\partial \varepsilon \partial s}\frac{\partial \kappa
^{\iota }}{\partial \tau }+\frac{\partial ^{2}g^{\iota }}{\partial ^{2}s}%
\frac{\partial \kappa ^{\iota }}{\partial \tau }\frac{\partial \kappa
^{\iota }}{\partial \varepsilon }\right) .  \label{eq:kappa-der2}
\end{equation}

To compute $\frac{\partial }{\partial \varepsilon }\delta \left( \varepsilon
,\tau \right) $, $\frac{\partial ^{2}}{\partial \tau \partial \varepsilon }%
\delta \left( \varepsilon ,\tau \right) $ we define%
\begin{equation}
h^{\iota }\left( \varepsilon ,s\right) =P^{\iota }\left( w^{\iota }\left(
r_{\iota },\varepsilon ,s\right) \right) ,  \label{eq:h-def}
\end{equation}%
compute%
\begin{eqnarray}
\frac{d}{d\varepsilon }h^{\iota }\left( \varepsilon ,\kappa ^{\iota }\left(
\varepsilon ,\tau \right) \right) &=&\frac{\partial h^{\iota }}{\partial
\varepsilon }+\frac{\partial h^{\iota }}{\partial s}\frac{\partial \kappa
^{\iota }}{\partial \varepsilon },  \notag \\
\frac{d^{2}}{d\tau d\varepsilon }h^{\iota }\left( \varepsilon ,\kappa
^{\iota }\left( \varepsilon ,\tau \right) \right) &=&\frac{\partial
^{2}h^{\iota }}{\partial s\partial \varepsilon }\frac{\partial \kappa
^{\iota }}{\partial \tau }+\frac{\partial ^{2}h^{\iota }}{\partial s^{2}}%
\frac{\partial \kappa ^{\iota }}{\partial \tau }\frac{\partial \kappa
^{\iota }}{\partial \varepsilon }+\frac{\partial h^{\iota }}{\partial s}%
\frac{\partial ^{2}\kappa ^{\iota }}{\partial \tau \partial \varepsilon },
\label{eq:d2-h-expansion}
\end{eqnarray}%
and obtain $\frac{\partial }{\partial \varepsilon }\delta \left( \varepsilon
,\tau \right) $, $\frac{\partial ^{2}}{\partial \tau \partial \varepsilon }%
\delta \left( \varepsilon ,\tau \right) $ from the fact that%
\begin{equation}
\delta \left( \varepsilon ,\tau \right) =\pi _{x}h^{u}\left( \varepsilon
,\kappa ^{u}\left( \varepsilon ,\tau \right) \right) -\pi _{x}h^{s}\left(
\varepsilon ,\kappa ^{s}\left( \varepsilon ,\tau \right) \right) .
\label{eq:delta-by-h}
\end{equation}

We finish this section by discussing how to solve (\ref{eq:kappa-def1}--\ref%
{eq:kappa-def2}) for $\kappa ^{u}$ and $\kappa ^{s}$. One possibility is to
use the interval Newton method. We present how this can be done in Appendix %
\ref{app:implicit-f-sol}. In our case, since the dimension of the equations
in question is one, we use the following lemma in our computer assisted part
of the proof:

\begin{lemma}
\label{lem:de-kappa-ver copy(1)}Let $\iota \in \left\{ u,s\right\} $ be
fixed and let $A=\left[ a_{1},a_{2}\right] $. Assume that for any $%
\varepsilon \in E$, function $s\rightarrow \pi _{s}h^{\iota }\left(
\varepsilon ,s\right) $ is strictly increasing on $A$. Consider a fixed $%
\tau \in \mathbb{S}^{1}$. If%
\begin{equation}
\pi _{s}h^{\iota }\left( \varepsilon ,a_{1}\right) <\tau <\pi _{s}h^{\iota
}\left( \varepsilon ,a_{2}\right) ,  \label{eq:expl-Lem1-2}
\end{equation}%
then for every $\varepsilon \in E,$ $\kappa ^{\iota }\left( \varepsilon
,\tau \right) \in A$.
\end{lemma}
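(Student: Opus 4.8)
The plan is to reduce the statement to a one–dimensional intermediate value argument applied to the auxiliary function $g^{\iota}$ from \eqref{eq:g1}, with the strict monotonicity hypothesis used to single out the relevant zero. First I would record the elementary identity that comes from \eqref{eq:g1} and \eqref{eq:h-def},
\begin{equation*}
g^{\iota}(\varepsilon,\tau,s)=\pi_{s}P^{\iota}\!\left(w^{\iota}(r_{\iota},\varepsilon,s)\right)-\tau=\pi_{s}h^{\iota}(\varepsilon,s)-\tau ,
\end{equation*}
together with the fact that, by the defining relations \eqref{eq:kappa-def1}--\eqref{eq:kappa-def2} (equivalently $g^{\iota}(\varepsilon,\tau,\kappa^{\iota}(\varepsilon,\tau))=0$), the value $\kappa^{\iota}(\varepsilon,\tau)$ is a zero of the map $s\mapsto g^{\iota}(\varepsilon,\tau,s)$.

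Next I would fix $\varepsilon\in E$ and study $\phi(s):=\pi_{s}h^{\iota}(\varepsilon,s)-\tau$ on $A=[a_{1},a_{2}]$. The map $s\mapsto\pi_{s}h^{\iota}(\varepsilon,s)$ is continuous (it is a coordinate of the composition of the parameterization $w^{\iota}$ with the section map $P^{\iota}$, the latter being well defined and regular by the transversality assumptions on $\Sigma$ and the CAPD framework), and by hypothesis it is strictly increasing on $A$; hence so is $\phi$. Assumption \eqref{eq:expl-Lem1-2} is precisely $\phi(a_{1})<0<\phi(a_{2})$, so by the Bolzano intermediate value theorem there is $s^{\ast}=s^{\ast}(\varepsilon)\in(a_{1},a_{2})$ with $\phi(s^{\ast})=0$, i.e. $\pi_{s}h^{\iota}(\varepsilon,s^{\ast})=\tau$, and strict monotonicity makes this zero unique within $A$.

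Finally I would identify $s^{\ast}(\varepsilon)$ with $\kappa^{\iota}(\varepsilon,\tau)$. Since $\kappa^{\iota}(\varepsilon,\tau)$ is, by construction, a zero of $s\mapsto g^{\iota}(\varepsilon,\tau,s)=\phi(s)$, and since $\kappa^{\iota}$ is a single-valued function of $(\varepsilon,\tau)$ under the standing assumption that \eqref{eq:kappa-def1}--\eqref{eq:kappa-def2} are solvable for a function $\kappa^{\iota}\colon E\times\mathbb{S}^{1}\to\mathbb{S}^{1}$ (cf.\ Remark~\ref{rem:kappa-sol}), we conclude $\kappa^{\iota}(\varepsilon,\tau)=s^{\ast}(\varepsilon)\in(a_{1},a_{2})\subset A$. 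As $\varepsilon\in E$ was arbitrary, this gives the claim for all $\varepsilon\in E$.

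The one point requiring care is this last matching step: the intermediate value theorem only produces a zero of $\phi$ inside $A$, and to call it $\kappa^{\iota}(\varepsilon,\tau)$ one needs either the global well-posedness of $\kappa^{\iota}$ from the setup, or to read the lemma as the statement that defines $\kappa^{\iota}(\varepsilon,\tau)$ to be this (unique, by monotonicity) zero in $A$. Either way, no analytic estimates are involved; the whole argument is continuity plus strict monotonicity, which is exactly what makes it convenient to check by the rigorous interval computation of $\pi_{s}h^{\iota}(\varepsilon,a_{1})$ and $\pi_{s}h^{\iota}(\varepsilon,a_{2})$.
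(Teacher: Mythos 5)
Your proof is correct and takes essentially the same route as the paper: the paper's own proof is the single sentence ``The result follows directly from the Bolzano's intermediate value theorem.'' You simply spell out the continuity/monotonicity details, and the caveat you flag about identifying the Bolzano zero with $\kappa^{\iota}(\varepsilon,\tau)$ is a fair observation that the paper leaves implicit.
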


\begin{proof}
The result follows directly from the Bolzano's intermediate value theorem.
\end{proof}

All computations discussed in this section can be performed in interval
arithmetic, provided that we have estimates for $\frac{\partial w^{\iota }}{%
\partial x_{j}}$, $\frac{\partial ^{2}w^{\iota }}{\partial x_{i}\partial
x_{j}}$. How to obtain such estimates will be discussed in section \ref%
{sec:ODE-Wcu}.

\begin{remark}
The method for obtaining bounds on $\frac{\partial w^{\iota }}{\partial x_{j}%
}$, $\frac{\partial ^{2}w^{\iota }}{\partial x_{i}\partial x_{j}}$, which is
the subject of sections \ref{sec:wcu-maps} and \ref{sec:ODE-Wcu}, is based
on the geometric method for normally hyperbolic invariant manifolds from 
\cite{conecond,Geom}. There are alternative methods to perform such
computation. For instance, \cite{param-method} discusses how such bounds can
be obtained using the parameterization method. This method can be
implemented to perform interval based validated numerical bounds. A reader
who is a specialist in this field can choose to use the parameterization
method to validate assumptions of Theorem \ref{th:main}. If such choice is
made, the specialist can in fact stop reading this paper at this point and
most likely successfully apply our method.
\end{remark}

%TCIDATA{Version=5.00.0.2606}
%TCIDATA{LaTeXparent=0,0,MMFedit.tex}

\section{Center-unstable manifolds for maps\label{sec:wcu-maps}}

In this section we recall the results from \cite{Geom}, which give
conditions for establishing the existence and smoothness of normally
hyperbolic invariant manifolds, together with their associated center-stable
and cnter-unstable manifolds. Here we focus on the cnter-unstable manifolds,
since this is sufficient for our needs. (The center-stable manifold of an
ODE is the center unstable manifold for time reversed ODE, thus knowing how
to handle one of the two is enough.) The results from \cite{Geom} are
recalled in sections \ref{sec:wcu-maps-setup} and \ref{sec:wcu-maps-results}.

In sections \ref{sec:maps-param} and \ref{sec:wcu-maps-d2-bounds} we extend
the results from to \cite{Geom}. Section \ref{sec:maps-param} discusses the
dependence of the manifolds on parameters. In section \ref%
{sec:wcu-maps-d2-bounds} we show how to obtain explicit estimates for the
second derivatives of the manifolds with respect to parameters.

All results in this section are formulated in the setting of maps. In
section \ref{sec:ODE-Wcu} we reformulate them for ODEs.

\subsection{Definitions and setup\label{sec:wcu-maps-setup}}

We assume that $\Lambda$ is a $c$-dimensional torus and use the notation 
\begin{equation*}
\varphi:\mathbb{R}^{c}\rightarrow\Lambda=\left( \mathbb{R}/\mathbb{Z}\right)
^{c},
\end{equation*}
for its covering. This gives us the set of charts being the restriction of $%
\varphi$ to balls $B$ in $\mathbb{R}^{c}$, which are small enough so that $%
\varphi:B\rightarrow\Lambda$ is a homeomorphism on its image. We introduce a
notation $R_{\Lambda}>0$ for a radius such that $\varphi_{|B(\lambda
,R_{\Lambda})}$ is a homeomorphism onto its image. We can for instance take $%
R_{\lambda}=\frac{1}{2}.$

Let $R<\frac{1}{2}R_{\Lambda}$ and denote by $D$ the set 
\begin{equation*}
D=\Lambda\times\overline{B}_{u}(R)\times\overline{B}_{s}(R),
\end{equation*}
where $\overline{B}_{n}(R)$ stands for a closed ball of radius $R$, centered
at zero, in $\mathbb{R}^{n}$. We consider a $C^{k+1}$ map, for $k\geq1$, 
\begin{equation*}
F:D\rightarrow\Lambda\times\mathbb{R}^{u}\times\mathbb{R}^{s}.
\end{equation*}
Here we assume that the map is considered in local coordinates that are
(roughly) well aligned with the dynamics. Throughout the section we use the
notation $z=(\lambda,x,y)$ to denote points in $D$. This means that notation 
$\lambda$ will stand for points on $\Lambda$, notation $x$ for points in $%
\mathbb{R}^{u}$, and $y$ for points in $\mathbb{R}^{s}$. The coordinate $x$
will be the unstable direction and $y$ will be the stable. We will write $F$
as $(F_{\lambda},F_{x},F_{y})$ , where $F_{\lambda},F_{x},F_{y}$ stand for
projections onto $\Lambda$, $\mathbb{R}^{u}$ and $\mathbb{R}^{s}$,
respectively. On $\mathbb{R}^{c}\times\mathbb{R}^{u}\times\mathbb{R}^{s}$ we
will use the Euclidian norm.

The set of points which are in the same good chart with point $q\in D$ will
be denoted by 
\begin{equation}
P(q)=\{z\in D\ |\ \Vert\pi_{\lambda}z-\pi_{\lambda}q\Vert\leq R_{\Lambda
}/2\}.  \label{eq:map-P-const}
\end{equation}

Let $L\in\left( \frac{2R}{R_{\Lambda}},1\right) $, and let us define the
following constants: 
\begin{align*}
\mu_{s,1} & =\sup_{z\in D}\left\{ \left\Vert \frac{\partial F_{y}}{\partial y%
}\left( z\right) \right\Vert +\frac{1}{L}\left\Vert \frac{\partial F_{y}}{%
\partial(\lambda,x)}(z)\right\Vert \right\} , \\
\mu_{s,2} & =\sup_{z\in D}\left\{ \left\Vert \frac{\partial F_{y}}{\partial y%
}\left( z\right) \right\Vert +L\left\Vert \frac{\partial F_{\left(
\lambda,x\right) }}{\partial y}(z)\right\Vert \right\} ,
\end{align*}%
\begin{align*}
\xi_{u,1} & =\inf_{z\in D}\left\{ m\left( \frac{\partial F_{x}}{\partial x}%
(z)\right) -\frac{1}{L}\left\Vert \frac{\partial F_{x}}{\partial\left(
\lambda,y\right) }(z)\right\Vert \right\} , \\
\xi_{u,1,P} & =\inf_{z\in D}m\left( \frac{\partial F_{x}}{\partial x}%
(P(z))\right) -\frac{1}{L}\sup_{z\in D}\left\Vert \frac{\partial F_{x}}{%
\partial\left( \lambda,y\right) }(z)\right\Vert ,
\end{align*}%
\begin{align}
\mu_{cs,1} & =\sup_{z\in D}\left\{ \left\Vert \frac{\partial F_{\left(
\lambda,y\right) }}{\partial\left( \lambda,y\right) }(z)\right\Vert
+L\left\Vert \frac{\partial F_{\left( \lambda,y\right) }}{\partial x}%
(z)\right\Vert \right\} ,  \notag \\
\mu_{cs,2} & =\sup_{z\in D}\left\{ \left\Vert \frac{\partial F_{\left(
\lambda,y\right) }}{\partial\left( \lambda,y\right) }(z)\right\Vert +\frac{1%
}{L}\left\Vert \frac{\partial F_{x}}{\partial\left( \lambda,y\right) }%
(z)\right\Vert \right\} ,  \notag
\end{align}%
\begin{align*}
\xi_{cu,1} & =\inf_{z\in D}\left\{ m\left( \frac{\partial F_{(\lambda,x)}}{%
\partial(\lambda,x)}(z)\right) -L\left\Vert \frac{\partial F_{(\lambda,x)}}{%
\partial y}(z)\right\Vert \right\} , \\
\xi_{cu,1,P} & =\inf_{z\in D}m\left( \frac{\partial F_{(\lambda,x)}}{%
\partial(\lambda,x)}(P(z))\right) -L\sup_{z\in D}\left\Vert \frac{\partial
F_{(\lambda,x)}}{\partial y}(z)\right\Vert .
\end{align*}

Intuitively, the constants $\mu $ measure the contraction rates in $D$, and $%
\xi $ measure expansion. The index $cs$ stands for the `center-stable'
direction, $cu$ for `center-unstable', $s$ for `stable' and $u$ for
`unstable'. Thus, for instance, $\mu _{s,1}$ and $\mu _{s,2}$ measure
contraction in the stable direction. The number $1$ or $2$ as second index
is used according to the following rule: $1$, when both partial derivatives
are of the same component of $F$, while $2$ is used the differentiation is
done with respect to the same block of variables of various components of $f$%
. The occasional additional index $P$ indicates that the constants are `more
stringent' and defined over sets $P(z)$ defined in (\ref{eq:map-P-const}).
The constant $L$ will turn out to be the Lipschitz bound for the slope of
center-unstable manifold.

\begin{definition}
\label{def:rate-conditions}We say that $F$ satisfies rate conditions of
order $k\geq1$ if $\xi_{u,1},$ $\xi_{u,1,P},$ $\xi_{cu,1},$ $\xi_{cu,1,P},$
are strictly positive, and for all $k\geq j\geq1$ holds%
\begin{equation}
\mu_{s,1}<1<\xi_{u,1,P},  \label{eq:rate-cond-1}
\end{equation}%
\begin{align}
\frac{\mu_{cs,1}}{\xi_{u,1,P}} & <1,\qquad\frac{\mu_{s,1}}{\xi_{cu,1,P}}<1,
\label{eq:rate-cond-2} \\
\frac{\mu_{cs,2}}{\xi_{u,1}} & <1,\qquad\frac{\mu_{s,2}}{(\xi_{cu,1})^{j+1}}%
<1.  \label{eq:rate-cond-3}
\end{align}
\end{definition}

Intuitively, $F$ satisfies rate conditions if the contraction on the stable
coordinate is stronger than the contraction on center-unstable coordinate,
and the expansion on the unstable coordinate is stronger than expansion on
the center-stable coordinate.

We introduce the following notation: 
\begin{align*}
J_{s}(z,M) & =\left\{ \left( \lambda,x,y\right) :\left\Vert \left(
\lambda,x\right) -\pi_{\lambda,x}z\right\Vert \leq M\left\Vert y-\pi
_{y}z\right\Vert \right\} , \\
J_{u}\left( z,M\right) & =\left\{ \left( \lambda,x,y\right) :\left\Vert
\left( \lambda,y\right) -\pi_{\lambda,y}z\right\Vert \leq M\left\Vert
x-\pi_{x}z\right\Vert \right\} .
\end{align*}
We shall refer to $J_{s}(z,M)$ as a stable cone of slope $M$ at $z$, and to $%
J_{u}(z,M)$ as an unstable cone of slope $M$ at $z$. The cones are depicted
in Figures \ref{fig:st-cones} and \ref{fig:unst-cones}.

\begin{figure}[ptb]
\begin{center}
\includegraphics[height=4cm]{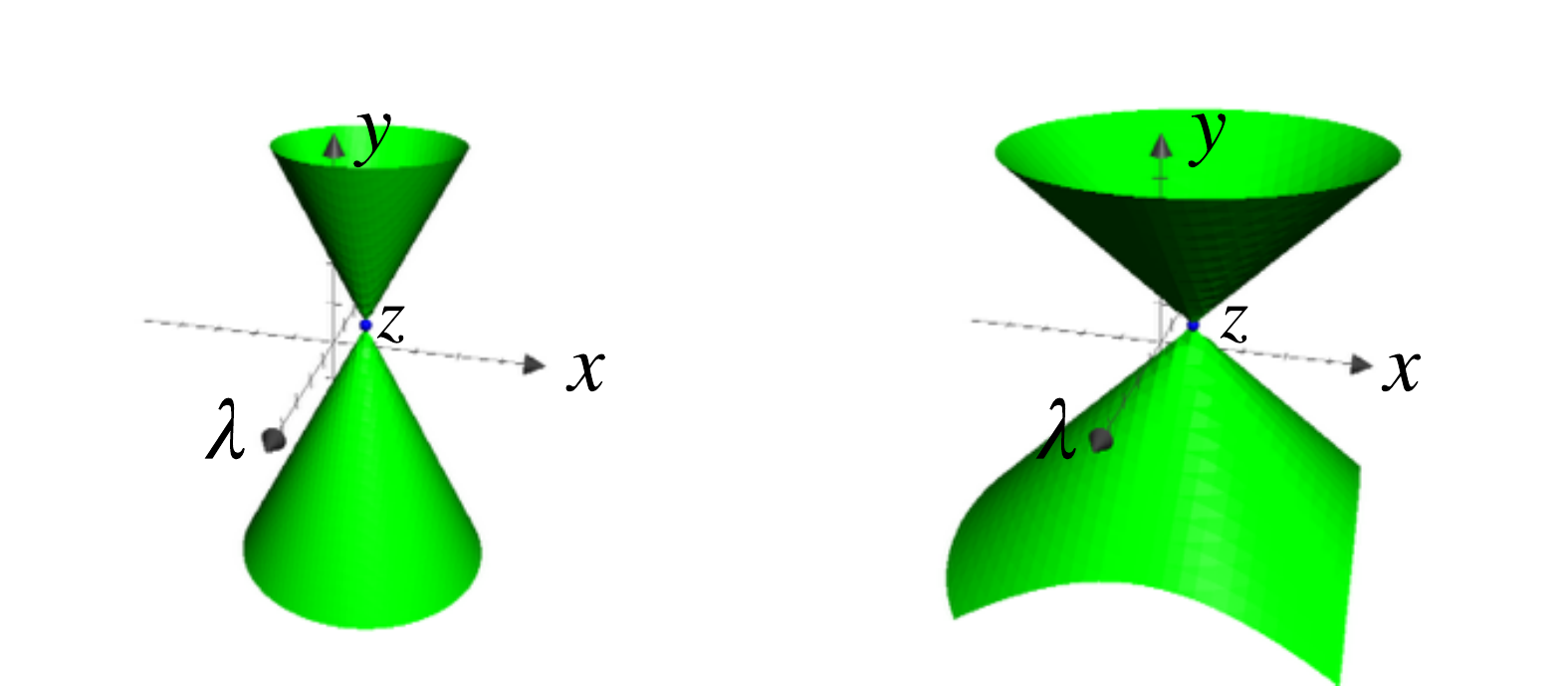}
\end{center}
\caption{The stable cone $J_{s}(z,M)$ for $M=\frac{1}{2}$ on the left, and $%
M=1$ on the right.}
\label{fig:st-cones}
\end{figure}

\begin{definition}
We say that a sequence $\left\{ z_{i}\right\} _{i=-\infty}^{0}$ is a (full)
backward trajectory of a point $z$ if $z_{0}=z,$ and $F\left( z_{i-1}\right)
=z_{i}$ for all $i\leq0.$
\end{definition}

\begin{definition}
\label{def:Wcu-maps}We define the center-unstable set in $D$ as%
\begin{equation*}
W^{cu}=\{z:\text{there is a full backward trajectory of }z\text{ in }D\}.
\end{equation*}
\end{definition}

\begin{definition}
\label{def:Wu-fiber} Assume that $z\in W^{cu}$. We define the unstable fiber
of $z$ as%
\begin{align*}
W_{z}^{u} & =\{p\in D:\exists\text{ backward trajectory }\left\{
p_{i}\right\} _{i=-\infty}^{0}\text{ of }p\text{ in }D, \\
& \quad\text{for any such backward trajectory} \\
& \quad\text{and any backward trajectory }\left\{ z_{i}\right\} _{i=-\infty
}^{0}\text{ of }z\text{ in }D \\
& \quad\text{holds }\left. p_{i}\in J_{u}\left( z_{i},1/L\right) \cap
D\right. \}.
\end{align*}
\end{definition}

\begin{figure}[ptb]
\begin{center}
\includegraphics[height=4cm]{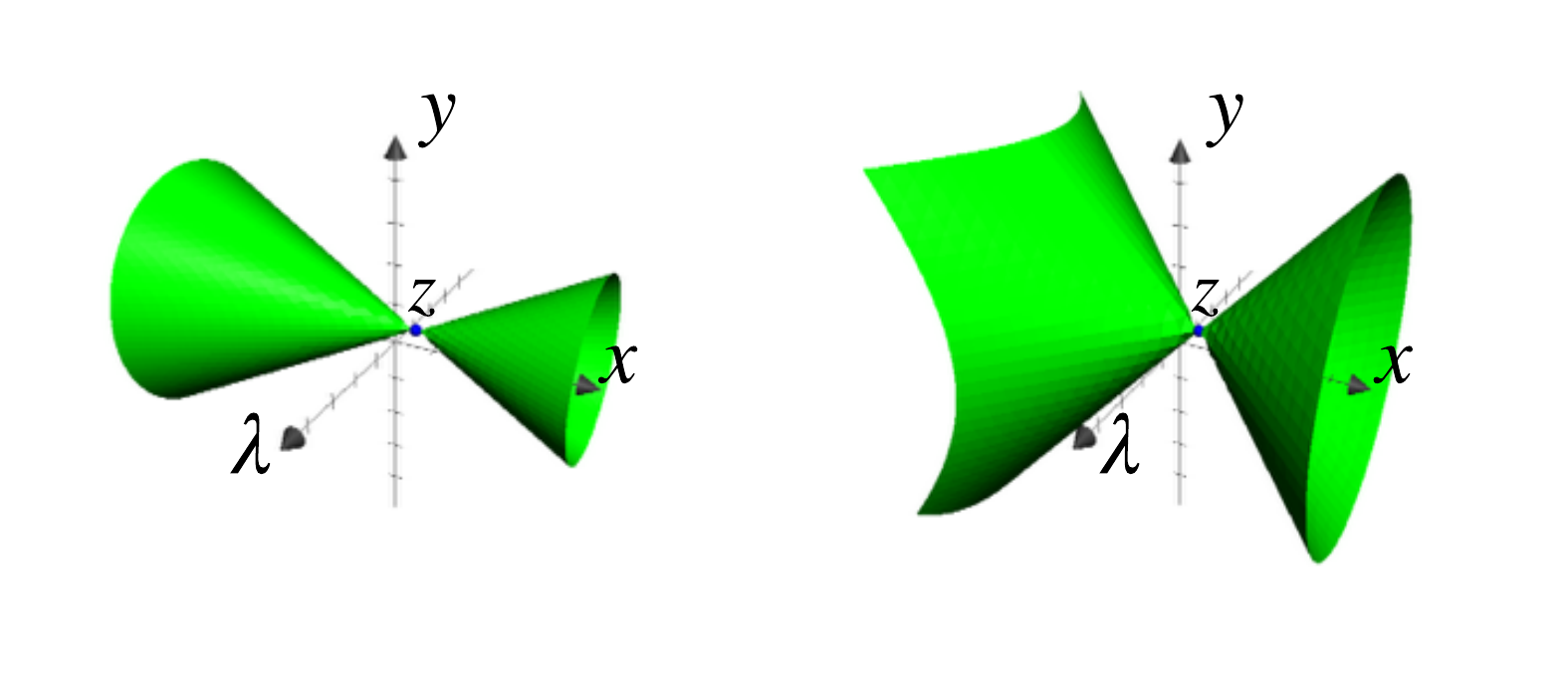}
\end{center}
\caption{The stable cone $J_{u}(z,M)$ for $M=\frac{1}{2}$ on the left, and $%
M=1$ on the right.}
\label{fig:unst-cones}
\end{figure}

The definition $W_{z}^{u}$ is related to cones, which is a nonstandard
approach, the standard one is through convergence rates. In Theorem \ref%
{th:main} we shall see that our definition implies the convergence rate as
in the standard theory \cite{F1,F2}.

\begin{definition}
\label{def:back-cc}We say that $F$ satisfies backward cone conditions if the
following condition is fulfilled:

If $z_{1},z_{2},F(z_{1}),F(z_{2})\in D$ and $F(z_{1})\in J_{s}\left(
F(z_{2}),1/L\right) $ then%
\begin{equation*}
z_{1}\in J_{s}\left( z_{2},1/L\right) .
\end{equation*}
\end{definition}

Intuitively, a function satisfies backward cone conditions, if images of two
points are vertically aligned, then the points themselves are also
vertically aligned. This is a technical condition that is associated with
the fact that we do not assume invertibility of our map. In the setting of
ODEs, the time shift along the trajectory map is invertible, and for small
times it is close to identity. It will turn out that backward cone
conditions are easily satisfied in the context of ODEs.

For $\lambda\in\Lambda$ we define the following sets: 
\begin{align*}
D_{\lambda} & =\overline{B}_{c}\left( \lambda,R_{\Lambda}\right) \times%
\overline{B}_{u}(R)\times\overline{B}_{s}(R), \\
D_{\lambda}^{+} & =\overline{B}_{c}\left( \lambda,R_{\Lambda}\right) \times%
\overline{B}_{u}(R)\times\partial B_{s}(R), \\
D_{\lambda}^{-} & =\overline{B}_{c}\left( \lambda,R_{\Lambda}\right)
\times\partial\overline{B}_{u}(R)\times B_{s}(R).
\end{align*}

\begin{definition}
\label{def:covering}We say that $F$ satisfies covering conditions if for any 
$z\in D$ there exists a $\lambda^{\ast}\in\Lambda$, such that the following
conditions hold:

For $U=J_{u}(z,1/L)\cap D$, there exists a homotopy $h$ 
\begin{equation*}
h:\left[ 0,1\right] \times U\rightarrow B_{c}\left( \lambda^{\ast
},R_{\Lambda}\right) \times\mathbb{R}^{u}\times\mathbb{R}^{s},
\end{equation*}
and a linear map $A:\mathbb{R}^{u}\rightarrow\mathbb{R}^{u}$ which satisfy:

\begin{enumerate}
\item \label{pt:covering-1}$h_{0}=F|_{U},$

\item \label{pt:covering-2}for any $\alpha\in\left[ 0,1\right] $, 
\begin{align}
h_{\alpha}\left( U\cap D_{\pi_{\theta}z}^{-}\right) \cap D_{\lambda^{\ast}}
& =\emptyset,  \label{eq:homotopy-exit} \\
h_{\alpha}\left( U\right) \cap D_{\lambda^{\ast}}^{+} & =\emptyset ,
\label{eq:homotopy-enter}
\end{align}

\item \label{pt:covering-3}$h_{1}\left( \lambda,x,y\right) =\left(
\lambda^{\ast},Ax,0\right) $,

\item \label{pt:covering-4}$A\left( \partial B_{u}(R)\right) \subset \mathbb{%
R}^{u}\setminus\overline{B}_{u}(R).$
\end{enumerate}
\end{definition}

In the above definition a reasonable choice for $\lambda^{\ast}$ will be $%
\lambda^{\ast}=\pi_{\lambda}F(z)$. In fact any point sufficiently close to $%
\pi_{\lambda}F(z)$ will be also good.

Intuitively, a function satisfies covering conditions if the coordinates are
topologically correctly aligned with the dynamics. The $D^{+}_{\lambda}$
plays the role of the topological exit set, and $D^{-}_{\lambda}$ of
topological entry.

\subsection{Establishing center unstable manifolds for maps\label%
{sec:wcu-maps-results}}

In this section we present a theorem which can be used to establish
existence of center unstable manifolds for maps.

\begin{theorem}
\cite[Theorem 16 + Remark 62]{Geom} \label{th:cu-maps}Let $k\geq1$ and $%
F:D\rightarrow\Lambda\times\mathbb{R}^{u}\times\mathbb{R}^{s}$ be a $C^{k+1}$
map. If $F$ satisfies covering conditions, rate conditions of order $k$ and
backward cone conditions, then $W^{cu}$ is a $C^{k}$ manifold, which are
graphs of a $C^{k}$ function 
\begin{equation*}
w^{cu}:\Lambda\times\overline{B}_{u}(R)\rightarrow\overline{B}_{s}(R),
\end{equation*}
meaning that 
\begin{equation*}
W^{cu}=\left\{ \left( \lambda,x,w^{cu}(\lambda,y)\right) :\lambda\in
\Lambda,x\in\overline{B}_{u}(R)\right\} .
\end{equation*}
Moreover, $w^{cu}$ is Lipschitz with constant $L$.

The manifold $W^{cu}$ is foliated by invariant fibers $W_{z}^{u}$, which are
graphs of $C^{k}$ functions%
\begin{equation*}
w_{z}^{u}:\overline{B}_{u}(R)\rightarrow\Lambda\times\overline{B}_{s}(R),
\end{equation*}
meaning that%
\begin{equation*}
W_{z}^{u}=\left\{ \left( \pi_{\lambda}w_{z}^{u}\left( x\right) ,x,\pi
_{y}w_{z}^{u}\left( x\right) \right) :x\in\overline{B}_{u}(R)\right\} .
\end{equation*}
The functions $w_{z}^{u}$ are Lipschitz with constants $1/L$. Moreover, for $%
C=2R\left( 1+1/L\right) ,$ 
\begin{align}
W_{z}^{u} & =\{p\in W^{cu}:\text{ }F^{-n}(p)\in D\text{ for all }n\in\mathbb{%
N}\text{,}  \label{eq:Wuz-convergence-maps} \\
& \left. \left\Vert F^{-n}\left( p\right) -F^{-n}\left( z\right) \right\Vert
\leq C\xi_{u,1,P}^{-n}\text{ for all }n\in\mathbb{N}\right\} .  \notag
\end{align}
\end{theorem}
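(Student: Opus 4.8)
Since this statement is quoted from \cite{Geom}, one could simply invoke it, but here I sketch the strategy I would follow to reconstruct it, namely the graph‑transform approach. The plan is to realize $W^{cu}$ as the unique fixed point of a graph transform on the complete metric space of graphs $\Gamma(\gamma)=\{(\lambda,x,\gamma(\lambda,x)):\lambda\in\Lambda,\ x\in\overline B_u(R)\}$ with $\|\gamma\|\le R$ and Lipschitz constant at most $L$, and then to bootstrap the regularity of the fixed point. The graph property itself I would extract first from the \emph{backward cone conditions}: iterating Definition~\ref{def:back-cc} shows that the stable cone field $z\mapsto J_s(z,1/L)$ is invariant under backward trajectories in $D$, so if $z_1,z_2\in W^{cu}$ with $z_1\in J_s(z_2,1/L)$ then $F^{-n}z_1\in J_s(F^{-n}z_2,1/L)$ for all $n$. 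The definition of $\mu_{s,1}$ then forces $\|\pi_y(F^{-n}z_1-F^{-n}z_2)\|\ge\mu_{s,1}^{-n}\|\pi_y(z_1-z_2)\|$, which, with $\mu_{s,1}<1$ from \eqref{eq:rate-cond-1} and boundedness of $D$, gives $z_1=z_2$. Contrapositively, $W^{cu}$ is the graph of a function $w^{cu}$ with $\mathrm{Lip}(w^{cu})\le L$; the restriction $L\in(2R/R_\Lambda,1)$ is precisely what keeps the relevant cones inside a single chart $P(z)$ of \eqref{eq:map-P-const}, so that these computations on the torus make sense.

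Nonemptiness of the graph over \emph{every} $(\lambda,x)$ is where the \emph{covering conditions} (Definition~\ref{def:covering}) enter. Running the graph transform on a Lipschitz‑$L$ graph $\Gamma$, for each $(\lambda,x)$ I would use the covering homotopy $h$ joining $F|_U$ to the linear model $(\lambda,x,y)\mapsto(\lambda^\ast,Ax,0)$ without touching the exit set $D^-$ or entering $D^+$, together with the expansion property $A(\partial B_u(R))\subset\mathbb R^u\setminus\overline B_u(R)$, to conclude via a topological degree / Wa\.zewski‑type argument that exactly one $y$ admits a backward $\Gamma$‑history; this defines the transformed graph. The rate conditions make this transform a uniform $C^0$ contraction, and its fixed point is a continuous $w^{cu}$, which by the previous paragraph coincides with the set‑theoretic $W^{cu}$.

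The genuinely hard step is promoting $w^{cu}$ from Lipschitz to $C^k$. I would use the invariant‑section / fibre‑contraction scheme in the style of Hirsch--Pugh--Shub: inductively for $j=1,\dots,k$, assuming $w^{cu}\in C^{j-1}$, build a bundle over the graph whose fibres consist of candidate $j$‑jets and a fibre‑contracting lift of the graph dynamics whose fibre contraction ratio is governed by $\mu_{s,2}/(\xi_{cu,1})^{j+1}<1$ from \eqref{eq:rate-cond-3}; uniqueness and continuity of the invariant section then yield $w^{cu}\in C^j$. The remaining inequalities $\mu_{cs,1}/\xi_{u,1,P}<1$, $\mu_{s,1}/\xi_{cu,1,P}<1$ and $\mu_{cs,2}/\xi_{u,1}<1$ from \eqref{eq:rate-cond-2}--\eqref{eq:rate-cond-3} are exactly what one needs to make the base dynamics and the linearized cone estimates coherent across overlapping charts of $\Lambda$. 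I expect this to be the main obstacle: one must handle chart transitions on $\Lambda$ carefully, and the hypothesis that $F$ is $C^{k+1}$ rather than $C^k$ is consumed here in closing the fibre‑contraction estimates.

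Finally, the unstable fibres. For $z\in W^{cu}$ I would take $W_z^u$ as prescribed in Definition~\ref{def:Wu-fiber} and rerun the cone argument in the unstable direction — now exploiting $\xi_{u,1,P}>1$ — to get that $W_z^u$ is a Lipschitz‑$1/L$ graph over $\overline B_u(R)$, that $F(W_z^u)\subset W_{F(z)}^u$, and that distinct fibres are cone‑transverse, so that $\{W_z^u\}$ foliates $W^{cu}$; their $C^k$ regularity comes from one more fibre‑contraction argument with the same rate bounds. For the characterization \eqref{eq:Wuz-convergence-maps}: if $p\in W_z^u$, then staying in $J_u(\cdot,1/L)$ along backward iterates gives $\|F^{-n}p-F^{-n}z\|\le(1+1/L)\|\pi_x(F^{-n}p-F^{-n}z)\|$, while the lower bound $\xi_{u,1,P}$ on forward unstable expansion gives $\|\pi_x(F^{-n}p-F^{-n}z)\|\le\xi_{u,1,P}^{-n}\|\pi_x(p-z)\|\le 2R\,\xi_{u,1,P}^{-n}$, hence $\|F^{-n}p-F^{-n}z\|\le 2R(1+1/L)\xi_{u,1,P}^{-n}=C\xi_{u,1,P}^{-n}$; conversely, if $p\in W^{cu}$ has all backward iterates in $D$ with $\|F^{-n}p-F^{-n}z\|\le C\xi_{u,1,P}^{-n}\to 0$, then the difference $F^{-n}p-F^{-n}z$ can never leave $J_u(\cdot,1/L)$, since outside that cone the backward dynamics expands (again by the rate conditions), contradicting decay to zero; hence $p\in W_z^u$.
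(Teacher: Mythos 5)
The paper does not prove this theorem; it is imported verbatim as \cite[Theorem 16 and Remark 62]{Geom}, so the intended justification here is the citation itself, and you are right that invoking it is all one needs. Your reconstruction is nevertheless broadly consistent with the actual argument in \cite{Geom}: that paper does proceed via cone conditions, covering relations, and a graph-transform fixed point, and the present paper explicitly records (see the discussion around \eqref{eq:w-G-limit}, citing \cite[Lemmas 46 and 57]{Geom}) that $w^{cu}$ and $w^u_z$ arise as limits $\lim_{n}\mathcal{G}^n(v_0)$, matching your outline for the graph and Lipschitz parts. Two places where your sketch departs from, or compresses, the cited proof deserve comment. First, for the $C^k$ bootstrap you invoke the Hirsch--Pugh--Shub invariant-section/fibre-contraction scheme; \cite{Geom} instead propagates higher-order cone conditions on $j$-jets directly, which is exactly the device the present paper reproduces at order two in Theorem \ref{th:unstable-cones}. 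Both routes consume the $C^{k+1}$ hypothesis and the inequality $\mu_{s,2}/(\xi_{cu,1})^{j+1}<1$, but the jet-cone argument is what yields the explicit a priori derivative bounds that the present paper needs downstream, so your route, while viable, is not the one the reference uses. Second, in the converse half of \eqref{eq:Wuz-convergence-maps}, the phrase that ``outside $J_u$ the backward dynamics expands'' is not literal, since $\mu_{cs,1}$ is not required to be less than one. The operative fact is the ratio condition $\mu_{cs,1}/\xi_{u,1,P}<1$ from \eqref{eq:rate-cond-2} together with backward invariance of $J_u^c$ along trajectories (a consequence of Lemma \ref{lem:Ju-cone-inclusion}): if $F^{-n_0}p\notin J_u(F^{-n_0}z,1/L)$ then $\|\pi_{\lambda,y}(F^{-n}p-F^{-n}z)\|\geq\mu_{cs,1}^{-(n-n_0)}\|\pi_{\lambda,y}(F^{-n_0}p-F^{-n_0}z)\|$ for all $n\geq n_0$, and comparison with the assumed $C\xi_{u,1,P}^{-n}$ decay forces the $(\lambda,y)$-gap at $n_0$ to vanish, a contradiction. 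Your argument is salvageable once this is the inequality you lean on, rather than a claim of monotone backward expansion.
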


Observe that bound on $L\in\left( \frac{2R}{R_{\Lambda}},1\right) $ gives us
lower bounds for the Lipschitz constants for functions $w^{cu}$, $w^{u}$,
which is clearly an overestimate for the case when $\mathbb{T}\times
\{0\}\times\{0\}$ is an invariant manifold. This lower bound is a
consequence of choices we have made when formulating Theorem \ref{th:cu-maps}%
, as we did not want to introduce different constants for each type of
cones, plus several inequalities between them. However, below theorem gives
conditions which allow to obtain better Lipschitz constants.

\begin{theorem}
\label{th:wuz}\cite[Theorem 18]{Geom} Let $M\in(0,1/L)$ and 
\begin{align*}
\xi & =\inf_{z\in D}m\left( \left[ \frac{\partial F_{x}}{\partial x}(P(z))%
\right] \right) -M\sup_{z\in D}\left\Vert \frac{\partial F_{x}}{%
\partial\left( \lambda,y\right) }(z)\right\Vert , \\
\mu & =\sup_{z\in D}\left\{ \left\Vert \frac{\partial F_{\left(
\lambda,y\right) }}{\partial\left( \lambda,y\right) }(z)\right\Vert +\frac{1%
}{M}\left\Vert \frac{\partial F_{\left( \lambda,y\right) }}{\partial x}%
(z)\right\Vert \right\} .
\end{align*}
If assumptions of Theorem \ref{th:cu-maps} hold true and also $\frac{\xi}{%
\mu }>1$, then the function $w_{z}^{u}$ from Theorem \ref{th:cu-maps} is
Lipschitz with constant $M.$
\end{theorem}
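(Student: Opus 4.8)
The plan is to prove the sharper statement that $W^u_z\subseteq J_u(z,M)$ for every $z\in W^{cu}$. Since Theorem~\ref{th:cu-maps} already tells us $W^{cu}$ is foliated by the fibers, this suffices: if $q_1,q_2$ lie on a common fiber then $W^u_{q_2}$ is that fiber, so $q_1\in W^u_{q_2}\subseteq J_u(q_2,M)$, which is exactly the assertion that $w^u_z$ is $M$-Lipschitz. So I would fix $z\in W^{cu}$, a backward trajectory $\{z_{-n}\}_{n\ge0}\subset D$ of $z$, and a point $p\in W^u_z$, $p\ne z$; by Definition~\ref{def:Wu-fiber} there is a backward trajectory $\{p_{-n}\}_{n\ge0}\subset D$ of $p$ with $p_{-n}\in J_u(z_{-n},1/L)\cap D$ for all $n$. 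Because $W^u_z$ is a Lipschitz graph over the $x$-coordinate (Theorem~\ref{th:cu-maps}), $\pi_x(p_{-n}-z_{-n})\ne0$ for all $n$ (else $p_{-n}\in J_u(z_{-n},1/L)$ forces $p_{-n}=z_{-n}$, hence $p=z$), so the slopes $t_n:=\|\pi_{\lambda,y}(p_{-n}-z_{-n})\|/\|\pi_x(p_{-n}-z_{-n})\|\in[0,1/L]$ are well defined, and the goal becomes $t_0\le M$.

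The key step is a one–step cone estimate. Using $p_{-(n-1)}-z_{-(n-1)}=F(p_{-n})-F(z_{-n})=\overline{DF}(z_{-n},p_{-n})(p_{-n}-z_{-n})$, expanding $\overline{DF}$ in the block form associated with the splitting into the center–stable variables $(\lambda,y)$ and the unstable variable $x$, and bounding the averaged partial derivatives by their suprema over $D$ and over the chart sets $P(z_{-n})$ exactly as in the proof of Theorem~\ref{th:cu-maps} in \cite{Geom}, I expect to obtain $t_{n-1}\le\Gamma(t_n)$, where
\begin{equation*}
\Gamma(t)=\frac{\sup_{D}\bigl(\|\partial F_{(\lambda,y)}/\partial(\lambda,y)\|\,t+\|\partial F_{(\lambda,y)}/\partial x\|\bigr)}{\inf_{z\in D}m(\partial F_x/\partial x(P(z)))-\bigl(\sup_{D}\|\partial F_x/\partial(\lambda,y)\|\bigr)t}.
\end{equation*}
Then I would record the elementary properties of $\Gamma$ on $[0,1/L]$: the denominator is positive there because $\xi_{u,1,P}>0$, so $\Gamma$ is well defined and increasing; $\Gamma(M)=M\mu/\xi<M$ by the hypothesis $\xi/\mu>1$; $\Gamma(1/L)=\mu_{cs,1}/(L\,\xi_{u,1,P})<1/L$ by the rate condition $\mu_{cs,1}/\xi_{u,1,P}<1$ from \eqref{eq:rate-cond-2}; and $\Gamma(t)-t$ equals a convex function divided by a positive affine one, hence, being negative at both endpoints $M$ and $1/L$, is negative on all of $[M,1/L]$.

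Finally I would iterate: from $t_{n-1}\le\Gamma(t_n)$ and monotonicity, $t_0\le\Gamma^n(t_n)\le\Gamma^n(1/L)$ for every $n$; since $\Gamma$ maps $[0,1/L]$ into $[0,1/L)$ and is increasing, $\{\Gamma^n(1/L)\}$ decreases to a fixed point $t_\ast$ of $\Gamma$ in $[0,1/L]$, and because $\Gamma(t)<t$ on $[M,1/L]$ there is no fixed point there, so $t_\ast\le M$; letting $n\to\infty$ gives $t_0\le t_\ast\le M$. The point that needs the most care is conceptual rather than computational: forward invariance of the narrower cone $J_u(z,M)$ — which $\xi/\mu>1$ readily supplies through $\Gamma(M)<M$ — is not by itself enough, because the fiber produced by Theorem~\ref{th:cu-maps} is a priori only known to sit inside the $1/L$-cone; one must run the now strictly contracting update $\Gamma$ backwards along the fiber to squeeze it down to slope $M$. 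The only other delicate bookkeeping, the control of the averaged derivative $\overline{\partial F_x/\partial x}$ by $m(\partial F_x/\partial x(P(\cdot)))$ over good charts, is identical to \cite{Geom} and I would simply cite it.
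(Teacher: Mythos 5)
The paper does not prove this theorem; it is quoted verbatim from \cite[Theorem~18]{Geom}, so there is no in-paper proof to compare against. Judged on its own merits, your argument is sound. Reducing the Lipschitz statement to $W^u_z\subseteq J_u(z,M)$ via the foliation is clean and correct. The one-step estimate $t_{n-1}\le\Gamma(t_n)$ is what the mean-value decomposition gives, provided $m\bigl(\left[\tfrac{\partial F_x}{\partial x}(P(z))\right]\bigr)$ is read as the worst-case $m$ over the interval (or convex) hull of derivatives in the chart, so that it dominates $m$ of the averaged Jacobian; you correctly flag this as the one point that must be imported from \cite{Geom}, and the bracket notation $[\,\cdot\,]$ in the theorem statement supports that reading. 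The downstream bookkeeping checks out: $\Gamma(M)=M\mu/\xi<M$ follows directly from the definitions of $\xi,\mu$; $\Gamma(1/L)=\mu_{cs,1}/(L\xi_{u,1,P})<1/L$ follows from rate condition~(\ref{eq:rate-cond-2}); the denominator of $\Gamma$ is $\ge\xi_{u,1,P}>0$ on $[0,1/L]$ so $\Gamma$ is well defined, nondecreasing, continuous, and maps $[0,1/L]$ into $[0,1/L)$; $\Gamma(t)-t$ has numerator that is convex in $t$ and a positive affine denominator, so negativity at $M$ and $1/L$ propagates to the whole interval, ruling out fixed points there. The iteration $t_0\le\Gamma^n(t_n)\le\Gamma^n(1/L)\downarrow t_\ast\le M$ then closes the argument. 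Your closing remark is also the key conceptual point: forward invariance of the $M$-cone alone does not suffice because the fiber is only a priori in the $1/L$-cone, and it is the backward iteration of the strictly contracting slope update along the fiber that squeezes the slope below $M$.
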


\begin{theorem}
\label{th:wcu}\cite[Theorem 19]{Geom} Let $M\in(0,L)$ and 
\begin{align*}
\xi & =\inf_{z\in D}m\left[ \frac{\partial F_{(\lambda,x)}}{\partial
(\lambda,x)}(P(z))\right] -M\sup_{z\in D}\left\Vert \frac{\partial
F_{(\lambda,x)}}{\partial y}(z)\right\Vert , \\
\mu & =\sup_{z\in D}\left\{ \left\Vert \frac{\partial F_{y}}{\partial y}%
\left( z\right) \right\Vert +\frac{1}{M}\left\Vert \frac{\partial F_{y}}{%
\partial(\lambda,x)}(z)\right\Vert \right\} .
\end{align*}
If assumptions of Theorem \ref{th:cu-maps} hold true and also $\frac{\xi}{%
\mu }>1$, then the function $w^{cu}$ from Theorem \ref{th:cu-maps} is
Lipschitz with constant $M$.
\end{theorem}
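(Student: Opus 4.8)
The plan is to reduce the statement to a one–step cone–contraction estimate for the graph $W^{cu}$, iterated along full backward trajectories, with the bound already supplied by Theorem~\ref{th:cu-maps} serving as an a priori estimate; this parallels the proof of Theorem~\ref{th:wuz}, with the block splitting $(\lambda,x)$ versus $y$ in place of $x$ versus $(\lambda,y)$. First I would lift $F$, $D$ and $W^{cu}$ to $\mathbb{R}^{c}\times\mathbb{R}^{u}\times\mathbb{R}^{s}$, so that $W^{cu}$ is the graph of a globally Lipschitz function with constant $L$, is forward invariant ($z\in W^{cu}$ and $F(z)\in D$ imply $F(z)\in W^{cu}$), and every point of $W^{cu}$ carries a full backward trajectory in $W^{cu}\cap D$. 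For $z_{1}\neq z_{2}$ in $W^{cu}$ the Lipschitz-$L$ property forces $\pi_{(\lambda,x)}(z_{1}-z_{2})\neq0$, so one may set
\[
\beta(z_{1},z_{2})=\frac{\|\pi_{y}(z_{1}-z_{2})\|}{\|\pi_{(\lambda,x)}(z_{1}-z_{2})\|}\le L ,
\]
and it is enough to show $\beta(z_{1},z_{2})\le M$ for all such pairs.

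The heart of the argument is the claim that whenever $\beta(z_{1},z_{2})=b\le L$ one has $\beta(F(z_{1}),F(z_{2}))\le\psi(b)$, where
\[
\psi(b)=\frac{\displaystyle\sup_{z\in D}\Big(\,\|\tfrac{\partial F_{y}}{\partial(\lambda,x)}(z)\|+b\,\|\tfrac{\partial F_{y}}{\partial y}(z)\|\,\Big)}{\displaystyle\inf_{z\in D} m\!\Big(\tfrac{\partial F_{(\lambda,x)}}{\partial(\lambda,x)}(P(z))\Big)-b\,\sup_{z\in D}\|\tfrac{\partial F_{(\lambda,x)}}{\partial y}(z)\|}\,.
\]
This comes from the average–derivative identity $F(z_{1})-F(z_{2})=\overline{DF}(z_{1},z_{2})(z_{1}-z_{2})$ written in the two blocks: estimating the $F_{y}$–component from above and inserting $\|\pi_{y}(z_{1}-z_{2})\|\le b\,\|\pi_{(\lambda,x)}(z_{1}-z_{2})\|$ gives the numerator, and estimating the $F_{(\lambda,x)}$–component from below gives the denominator. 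By hypothesis $\psi(M)=M\mu/\xi<M$, and $\psi$ is increasing and convex on the set where the denominator is positive, which by $\xi_{cu,1,P}>0$ (one of the rate conditions of Theorem~\ref{th:cu-maps}) contains $[0,L]$; moreover the slope-$L$ cone is strictly forward invariant along $W^{cu}$, i.e.\ $\psi(L)<L$, a consequence of the strict rate conditions that make $W^{cu}$ a Lipschitz-$L$ graph.

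To conclude, fix $z_{1},z_{2}\in W^{cu}$, choose full backward trajectories $\{z_{1,-n}\}_{n\ge0}$ and $\{z_{2,-n}\}_{n\ge0}$ in $W^{cu}\cap D$, and set $b_{n}=\beta(z_{1,-n},z_{2,-n})$. Since $F(z_{j,-n-1})=z_{j,-n}$ for $j\in\{1,2\}$, the one–step estimate gives $b_{n}\le\psi(b_{n+1})$, while Theorem~\ref{th:cu-maps} gives $b_{n}\le L$ for every $n$. If $b_{0}>M$, then, $\psi$ being increasing with $\psi(M)<M$, induction yields $b_{n}>M$ for all $n$; since $\psi$ is convex with $\psi(M)<M$ and $\psi(L)<L$, one has $\psi(b)<b$ on $(M,L]$, so $(b_{n})_{n\ge0}$ would be strictly increasing and bounded above by $L$, hence convergent to some $b_{\infty}\in(M,L]$ with $b_{\infty}\le\psi(b_{\infty})$ — a contradiction. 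Therefore $\beta(z_{1},z_{2})=b_{0}\le M$, and descending to $\Lambda$ shows that $w^{cu}$ is Lipschitz with constant $M$.

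The main obstacle is the lower estimate for the $F_{(\lambda,x)}$–block: bounding $m$ of the \emph{averaged} Jacobian $\overline{\partial F_{(\lambda,x)}/\partial(\lambda,x)}(z_{1},z_{2})$ from below by $\inf_{z} m(\partial F_{(\lambda,x)}/\partial(\lambda,x)(P(z)))$. This is not formal, since $m(\cdot)$ behaves poorly under averaging; it is exactly here that the ``$P$''–versions of the constants, together with the standing assumption that $F$ is expressed in coordinates roughly aligned with the dynamics and the constraint $L\in(2R/R_{\Lambda},1)$, must be invoked, so that the segment $[z_{1},z_{2}]$ (which lies in a slope-$\le L$ cone and has $\pi_{x}$–extent at most $2R$) stays inside a single set $P(z)$ on which this Jacobian block is a small perturbation of a fixed expanding map. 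A secondary, purely bookkeeping, difficulty is that a backward trajectory may wind around $\Lambda$, so the whole argument has to be carried out consistently in the universal cover, with $R_{\Lambda}$ absorbing the cone–controlled spread of the iterates $z_{j,-n}$.
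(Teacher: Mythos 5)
The paper does not prove Theorem~\ref{th:wcu}; it only cites \cite[Theorem 19]{Geom}, so there is no in-paper proof to compare against. I therefore judge your proposal on its own terms, and on whether it is consistent with the machinery that \emph{is} developed in this paper (the cone/graph-transform arguments of sections~\ref{sec:wcu-maps} and~\ref{sec:wcu-maps-d2-bounds}).

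The overall architecture you propose — a one-step cone-slope map $\psi$, the identity $\psi(M)=M\mu/\xi<M$ and $\psi(L)=L\,\mu_{s,1}/\xi_{cu,1,P}<L$, convexity of $\psi$ forcing $\psi<\mathrm{id}$ on $[M,L]$, and the monotone backward iteration with the a priori bound $b_n\le L$ — is sound, and the bookkeeping (distinct backward iterates, nonvanishing $\pi_{(\lambda,x)}$-displacement via the Lipschitz-$L$ graph) is handled correctly. The gap is exactly where you flag it, and it is a real one, not a formality that can be waved away by invoking ``$P$-versions'' and ``roughly aligned coordinates.'' The one-step estimate needs
\begin{equation*}
m\!\left(\overline{\tfrac{\partial F_{(\lambda,x)}}{\partial(\lambda,x)}}(z_1,z_2)\right)\;\ge\;\inf_{z\in D} m\!\left(\tfrac{\partial F_{(\lambda,x)}}{\partial(\lambda,x)}(P(z))\right),
\end{equation*}
and $m$ is neither concave nor quasi-concave under matrix averaging: a one-parameter family of rotations with $m\equiv 1$ has an average with $m=2/\pi$, so the inequality can genuinely fail, and no amount of ``short segment inside $P(z)$'' repairs it without a quantitative bound on the variation of the Jacobian block, which your constants do not encode. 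There are two honest fixes, and your write-up should commit to one. Either (i) interpret $m\bigl[\tfrac{\partial F_{(\lambda,x)}}{\partial(\lambda,x)}(P(z))\bigr]$ as $m$ over the \emph{convex (interval) hull} of the Jacobians on $P(z)$ — then $\overline{A}(z_1,z_2)$ is itself an element of the set over which the infimum is taken and the bound is immediate; this is in fact what a validated interval-arithmetic implementation computes, and it is consistent with the square-bracket notation in the statement. Or (ii) abandon the averaged mean-value form and argue via the graph transform, exactly as this paper does for the second-derivative bound in Theorem~\ref{th:unstable-cones}: estimate $\|D\mathcal{G}(v)\|$ \emph{pointwise} as $\bigl\|\bigl(\tfrac{\partial F_y}{\partial(\lambda,x)}+\tfrac{\partial F_y}{\partial y}Dv\bigr)\bigl(\tfrac{\partial F_{(\lambda,x)}}{\partial(\lambda,x)}+\tfrac{\partial F_{(\lambda,x)}}{\partial y}Dv\bigr)^{-1}\bigr\|\le M\mu/\xi<M$ when $\|Dv\|\le M$, conclude that $\mathcal{G}$ preserves the set of $M$-Lipschitz graphs, and pass to the $C^0$ limit $w^{cu}=\lim_n\mathcal{G}^n(v_0)$ guaranteed by \cite[Lemmas 46, 57]{Geom}; no averaged Jacobian ever appears. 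Route~(ii) is the cleaner mathematical argument and is the one that matches the style of Theorems~\ref{th:unstable-cones}, \ref{th:wuz}, and the \cite{Geom} lemmas this paper quotes (Lemma~\ref{lem:Jsc-propagation}, Lemma~\ref{lem:Ju-cone-inclusion}); route~(i) is what makes the theorem directly checkable by a rigorous numerical code. As written, your proof silently needs~(i) while presenting itself as a purely analytic argument, which is where it falls short.
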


In our situation the map $F$ will be a time shift along a trajectory of an
ODE, which is invertible. We can apply Theorem \ref{th:cu-maps} to $F^{-1}$,
(reversing the roles of coordinates $x,y$) and thus obtain the bounds for
the center-stable manifold. The intersection of the center-stable manifold
with the center-unstable manifold is the normally hyperbolic invariant
manifold.

\subsection{Dependence of manifolds on parameters\label{sec:maps-param}}

We consider a family of maps $F_{\varepsilon }:D\rightarrow \Lambda \times 
\mathbb{R}^{u}\times \mathbb{R}^{s}$ with $\varepsilon \in E$. For
simplicity, we assume that $E=\mathbb{S}^{1}$. We can apply Theorem \ref%
{th:cu-maps} to each of the maps separately and obtain a family of functions 
$w_{\varepsilon }^{cu}$ and $w_{z,\varepsilon }^{u}$ for $\varepsilon \in E$%
. We can also extend the map to include the parameter as follows. We first
define $\tilde{\Lambda}=\mathbb{S}^{1}\times \Lambda $ and $\tilde{D}=\tilde{%
\Lambda}\times \overline{B}_{u}(R)\times \overline{B}_{s}(R)$ and consider%
\begin{equation*}
F:\tilde{D}\rightarrow \tilde{\Lambda}\times \mathbb{R}^{u}\times \mathbb{R}%
^{s},
\end{equation*}%
defined as%
\begin{equation*}
F\left( \varepsilon ,\lambda ,x,y\right) =\left( \varepsilon ,F_{\varepsilon
}\left( \lambda ,x,y\right) \right) .
\end{equation*}%
We can then apply Theorem \ref{th:cu-maps} to $F$. This will establish
existence of a center unstable manifold parameterized by%
\begin{equation*}
w^{cu}:\tilde{\Lambda}\times \overline{B}_{u}(R)\rightarrow \overline{B}%
_{s}(R).
\end{equation*}%
Theorem \ref{th:cu-maps} establishes that $w^{cu}$ is Lipschitz with
constant $L$. This means that for any $\left( \lambda ,x\right) \in \Lambda
\times \overline{B}_{u}(R)$ and any $\varepsilon _{1},\varepsilon _{2}\in E$
we have%
\begin{equation*}
\left\Vert w_{\varepsilon _{1}}^{cu}\left( \lambda ,x\right) -w_{\varepsilon
_{2}}^{cu}\left( \lambda ,x\right) \right\Vert =\left\Vert w^{cu}\left(
\varepsilon _{1},\lambda ,x\right) -w^{cu}\left( \varepsilon _{2},\lambda
,x\right) \right\Vert \leq L\left\Vert \varepsilon _{1}-\varepsilon
_{2}\right\Vert .
\end{equation*}%
If assumptions of Theorem \ref{th:cu-maps} are applied with $k>1$, then we
know that $w^{cu}$ is $C^{1}$, and above inequality gives us the following
dependence with respect to the parameter 
\begin{equation*}
\left\Vert \frac{\partial }{\partial \varepsilon }w_{\varepsilon
}^{cu}\left( \lambda ,x\right) \right\Vert \leq L.
\end{equation*}

Extending the $\Lambda$ to include the parameter can also be used to
establish bounds on the second or mixed derivative of $w_{\varepsilon}^{cu}$
with respect to the parameter, by using the method given in section \ref%
{sec:wcu-maps-d2-bounds} below.

\subsection{Bounds on second derivatives\label{sec:wcu-maps-d2-bounds}}

In this section we shall show how we can obtain explicit bounds on the
second derivatives of the parameterization of the center unstable manifold
established in Theorem \ref{th:cu-maps}.

For the sake of simplicity, we shall use two coordinates $\mathrm{x}$ and $%
\mathrm{y}$. We shall study the bounds on the second derivative of a
function $\mathrm{y}=w\left( \mathrm{x}\right) $ under appropriate rate
conditions. In applications, we can have:

\begin{itemize}
\item $\mathrm{x}=x,$ $\mathrm{y}=\left( \lambda,y\right) $ and $w(\mathrm{x}%
)=w_{z}^{u}\left( \mathrm{x}\right) $;

\item $\mathrm{x}=\left( \lambda,x\right) ,$ $\mathrm{y}=y$ and $w(\mathrm{x}%
)=w^{cu}\left( \mathrm{x}\right) .$
\end{itemize}

Similarly, in the case of a family of maps, which depend on parameters (as
discussed in Section \ref{sec:maps-param}), we can have:

\begin{itemize}
\item $\mathrm{x}=x,$ $\mathrm{y}=\left( \varepsilon,\lambda,y\right) $ and $%
w(\mathrm{x})=w_{z}^{u}\left( \mathrm{x}\right) $;

\item $\mathrm{x}=\left( \varepsilon,\lambda,x\right) ,$ $\mathrm{y}=y$ and $%
w(\mathrm{x})=w^{cu}\left( \mathrm{x}\right) .$
\end{itemize}

We shall assume that $(\mathrm{x},\mathrm{y})\in\mathbb{R}^{u+s}$ and
consider $F:D\rightarrow\mathbb{R}^{u+s}$ which is $C^{3}$ differentiable,
where $D\subset\mathbb{R}^{u+s}$ is the domain of $F$.

We assume that $F$ is such that if $v:\mathbb{R}^{u}\rightarrow \mathbb{R}%
^{s}$ is Lipschitz with constant $\mathcal{L}>0$, then the graph transform $%
\mathcal{G}\left( v\right) $ is well defined i.e.%
\begin{equation}
\mathcal{G}\left( v\right) =F_{y}\circ \left( \mathrm{id},v\right) \circ
\left( F_{x}\circ \left( \mathrm{id},v\right) \right) ^{-1}.
\label{eq:G-limit}
\end{equation}%
Assume also that for $v_{0}\left( \mathrm{x}\right) =0$ 
\begin{equation}
w=\lim_{n\rightarrow \infty }\mathcal{G}^{n}\left( v_{0}\right) .
\label{eq:w-G-limit}
\end{equation}%
Such is the setting in the construction of $w=w^{cu}$ and $w=w_{z}^{u}$ in 
\cite{Geom}. In such case, the property (\ref{eq:w-G-limit}) follows from
assumptions of Theorem \ref{th:cu-maps}; see \cite[Lemma 46]{Geom} and \cite[%
Lemma 57]{Geom}. In the case of $w=w^{cu}$ we take $\mathcal{L}=L$ (where $L$
is the constant from Theorem \ref{th:cu-maps}) and for $w=w_{z}^{u}$ we take 
$\mathcal{L}=1/L$. The following result will allow us to obtain estimates on
the second derivative of $w$.

\begin{theorem}
\label{th:unstable-cones}Let $\mathcal{L}>0$ and define 
\begin{align*}
\xi & =\inf_{z\in D}\left( m\left( \frac{\partial F_{\mathrm{x}}}{\partial 
\mathrm{x}}(z)\right) -\mathcal{L}\left\Vert \frac{\partial F_{\mathrm{x}}}{%
\partial \mathrm{y}}(z)\right\Vert \right) , \\
\mu _{1}& =\sup_{z\in D}\left( \left\Vert \frac{\partial F_{\mathrm{y}}}{%
\partial \mathrm{y}}(z)\right\Vert +\frac{1}{\mathcal{L}}\left\Vert \frac{%
\partial F_{\mathrm{y}}}{\partial \mathrm{x}}(z)\right\Vert \right) , \\
\mu _{2}& =\sup_{z\in D}\left( \left\Vert \frac{\partial F_{\mathrm{y}}}{%
\partial \mathrm{y}}(z)\right\Vert +\mathcal{L}\left\Vert \frac{\partial F_{%
\mathrm{x}}}{\partial \mathrm{y}}(z)\right\Vert \right) .
\end{align*}%
Assume that \footnote{%
We have the following link with the rate conditions from Definition \ref%
{def:rate-conditions}: When $\mathrm{x}=\left( \lambda ,x\right) $ and $%
\mathrm{y}=y$, then we take $\mathcal{L}=L$ and see that $\xi =\xi
_{cu,1}\geq \xi _{cu,1,P}$, $\mu _{1}=\mu _{s,1}$ and $\mu _{2}=\mu _{s,2}$.
Hence (\ref{eq:ratecond}) follows from the rate conditions:
\par
\begin{equation*}
\frac{\mu _{1}}{\xi }=\frac{\mu _{s,1}}{\xi _{cu,1}}\leq \frac{\mu _{s,1}}{%
\xi _{cu,1,P}}<1,\qquad \frac{\mu _{2}}{\xi ^{2}}=\frac{\mu _{s,2}}{\xi
_{cu,1}^{2}}<1.
\end{equation*}%
\par
\noindent Similarly, for $\mathrm{x}=x$ and $\mathrm{y}=\left( \lambda
,y\right) $, we consider $\mathcal{L}=1/L$. Then $\xi =\xi _{u,1}\geq \xi
_{u,1,P},$ $\mu _{1}=\mu _{cs,1}$, $\mu _{2}=\mu _{cs,2},$ and (\ref%
{eq:ratecond}) also follows from the rate conditions in a similar way.}%
\begin{equation}
\xi >0,\qquad \frac{\mu _{1}}{\xi }<1,\qquad \frac{\mu _{2}}{\xi ^{2}}<1.
\label{eq:ratecond}
\end{equation}%
Let 
\begin{align}
C_{x}& =\frac{1}{2}\max_{p\in D,\Vert h\Vert =1}\Vert D^{2}F_{\mathrm{x}%
}(p)(h,h)\Vert ,  \label{eq:c_x} \\
C_{y}& =\frac{1}{2}\max_{p\in D,\Vert h\Vert =1}\Vert D^{2}F_{\mathrm{y}%
}(p)(h,h)\Vert .  \label{eq:c_y}
\end{align}%
and 
\begin{align}
C_{y,1}& =\sup_{p\in D}\frac{1}{2}\left\Vert \frac{\partial ^{2}F_{\mathrm{y}%
}}{\partial \mathrm{x}^{2}}(p)\right\Vert ,  \notag \\
C_{y,2}& =\sup_{p\in D}\left\Vert \frac{\partial ^{2}F_{\mathrm{y}}}{%
\partial \mathrm{x}\partial \mathrm{y}}(p)\right\Vert ,  \label{eq:c_yi} \\
C_{y,3}& =\sup_{p\in D}\frac{1}{2}\left\Vert \frac{\partial ^{2}F_{\mathrm{y}%
}}{\partial \mathrm{y}^{2}}(p)\right\Vert .  \notag
\end{align}%
Then for any $\mathrm{x}$ and $h$ holds (where it makes sense) where $w$ is
defined by (\ref{eq:w-G-limit}) 
\begin{equation}
w(\mathrm{x}+h)=w(\mathrm{x})+Dw(\mathrm{x})h+\Delta y(\mathrm{x},h),\quad
\Vert \Delta y(\mathrm{x},h)\Vert \leq M\Vert h\Vert ^{2}
\label{eq:c2-bound-jet-claim}
\end{equation}%
where 
\begin{equation}
M>\frac{(\mathcal{L}C_{x}+C_{y})(1+\mathcal{L}^{2})}{\xi ^{2}-\mu _{2}}.
\label{eq:M}
\end{equation}%
One can obtain an alternative (giving tighter estimates; see Remark \ref%
{rem:flat-manif}) expression for $M$ 
\begin{equation}
M>\frac{\mathcal{L}C_{x}(1+\mathcal{L}^{2})+C_{y,1}+C_{y,2}\mathcal{L}%
+C_{y,3}\mathcal{L}^{2}}{\xi ^{2}-\mu _{2}}  \label{eq:M-improved}
\end{equation}%
Hence for any $h\in \mathbb{R}^{u}$ holds 
\begin{equation*}
\left\Vert \frac{1}{2}D^{2}w\left( \mathrm{x}\right) (h,h)\right\Vert \leq
M\Vert h\Vert ^{2}.
\end{equation*}
\end{theorem}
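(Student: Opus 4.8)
The plan is to use that $w$ is the fixed point of the graph transform $\mathcal{G}$ of (\ref{eq:G-limit}), realized as the limit (\ref{eq:w-G-limit}) of the iterates $\mathcal{G}^{n}(v_{0})$ with $v_{0}\equiv 0$, and to exhibit an invariant class of jets. Concretely, let $\mathcal{W}_{M}$ denote the set of functions $v$ on the relevant domain that are Lipschitz with constant $\mathcal{L}$ and in addition satisfy $\|v(\mathrm{x}+h)-v(\mathrm{x})-Dv(\mathrm{x})h\|\le M\|h\|^{2}$ for all admissible $\mathrm{x},h$. Then $v_{0}\equiv 0\in\mathcal{W}_{M}$ trivially, and if one shows $\mathcal{G}(\mathcal{W}_{M})\subseteq\mathcal{W}_{M}$ whenever $M$ obeys (\ref{eq:M}), then every $\mathcal{G}^{n}(v_{0})$ lies in $\mathcal{W}_{M}$, and a limiting argument puts $w\in\mathcal{W}_{M}$, which is exactly (\ref{eq:c2-bound-jet-claim}).

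The core is the invariance step. Fix $v\in\mathcal{W}_{M}$, put $\psi=F_{\mathrm{x}}\circ(\mathrm{id},v)$ so that $\bar v:=\mathcal{G}(v)=F_{\mathrm{y}}\circ(\mathrm{id},v)\circ\psi^{-1}$. First, $\xi>0$ together with $\mathrm{Lip}(v)\le\mathcal{L}$ gives $\|\psi(\tilde{\mathrm{x}}_{2})-\psi(\tilde{\mathrm{x}}_{1})\|\ge\xi\|\tilde{\mathrm{x}}_{2}-\tilde{\mathrm{x}}_{1}\|$, so $\psi^{-1}$ is Lipschitz with constant $1/\xi$; that $\bar v$ is again $\mathcal{L}$-Lipschitz follows from $\mu_{1}/\xi<1$, which is precisely the content of Theorems~\ref{th:wuz} and \ref{th:wcu} (from \cite{Geom}) and which I would simply recall. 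Then, for $\mathrm{x}_{i}=\psi(\tilde{\mathrm{x}}_{i})$, I would Taylor-expand $F_{\mathrm{x}}$ and $F_{\mathrm{y}}$ to second order about $(\tilde{\mathrm{x}}_{1},v(\tilde{\mathrm{x}}_{1}))$ with remainders controlled by $C_{x}$ and $C_{y}$ from (\ref{eq:c_x})--(\ref{eq:c_y}), insert the first-order expansion of $v$ (remainder $\le M\|\tilde{\mathrm{x}}_{2}-\tilde{\mathrm{x}}_{1}\|^{2}$), compute $D\bar v(\mathrm{x}_{1})$ by the chain rule, and collect terms. Using $\|v(\tilde{\mathrm{x}}_{2})-v(\tilde{\mathrm{x}}_{1})\|\le\mathcal{L}\|\tilde{\mathrm{x}}_{2}-\tilde{\mathrm{x}}_{1}\|$ (which produces the factor $1+\mathcal{L}^{2}$ relating the $(\mathrm{x},\mathrm{y})$-displacement to the $\tilde{\mathrm{x}}$-displacement) and $\|\tilde{\mathrm{x}}_{2}-\tilde{\mathrm{x}}_{1}\|\le\xi^{-1}\|\mathrm{x}_{2}-\mathrm{x}_{1}\|$, the quadratic remainder of $\bar v$ at scale $\|\mathrm{x}_{2}-\mathrm{x}_{1}\|$ is bounded by $\xi^{-2}\big((\mathcal{L}C_{x}+C_{y})(1+\mathcal{L}^{2})+\mu_{2}M\big)\|\mathrm{x}_{2}-\mathrm{x}_{1}\|^{2}$, where the coefficient $\mu_{2}=\|\partial F_{\mathrm{y}}/\partial\mathrm{y}\|+\mathcal{L}\|\partial F_{\mathrm{x}}/\partial\mathrm{y}\|$ of the $M$-term collects the contribution of $v$'s remainder transported through $\partial F_{\mathrm{y}}/\partial\mathrm{y}$ and through the second derivative of $\psi^{-1}$ (via $D^{2}(\psi^{-1})=-(D\psi)^{-1}(D^{2}\psi)((D\psi)^{-1}\cdot,(D\psi)^{-1}\cdot)$ and $D^{2}\psi=D^{2}F_{\mathrm{x}}(\cdot,\cdot)+\partial_{\mathrm{y}}F_{\mathrm{x}}\,D^{2}v$). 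Requiring this bound to be $\le M\|\mathrm{x}_{2}-\mathrm{x}_{1}\|^{2}$ is the same as $M(\xi^{2}-\mu_{2})\ge(\mathcal{L}C_{x}+C_{y})(1+\mathcal{L}^{2})$, which is solvable exactly because $\mu_{2}/\xi^{2}<1$, yielding (\ref{eq:M}). (Equivalently, one may differentiate the invariance relation $w=\mathcal{G}(w)$ twice and estimate the resulting linear equation for $D^{2}w$ in the supremum norm; the absorption inequality is the same.)

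For the limit, the iterates $\mathcal{G}^{n}(v_{0})$ converge to $w$ (uniformly, by \cite{Geom}) and are uniformly $\mathcal{L}$-Lipschitz, so for each $\mathrm{x}$ the derivatives $D(\mathcal{G}^{n}(v_{0}))(\mathrm{x})$ stay in a fixed ball; extracting a subsequential limit $L(\mathrm{x})$ and passing to the limit in the quadratic estimate gives $\|w(\mathrm{x}+h)-w(\mathrm{x})-L(\mathrm{x})h\|\le M\|h\|^{2}$ for all $h$, which forces $w$ to be differentiable at $\mathrm{x}$ with $Dw(\mathrm{x})=L(\mathrm{x})$, i.e. (\ref{eq:c2-bound-jet-claim}). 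Finally, under the hypotheses of Theorem~\ref{th:cu-maps} with $k=2$ the function $w$ is $C^{2}$, so writing $w(\mathrm{x}+th)-w(\mathrm{x})-tDw(\mathrm{x})h=\tfrac{t^{2}}{2}D^{2}w(\mathrm{x})(h,h)+o(t^{2})$, dividing by $t^{2}$ and letting $t\to 0$ in (\ref{eq:c2-bound-jet-claim}) gives $\|\tfrac12 D^{2}w(\mathrm{x})(h,h)\|\le M\|h\|^{2}$. The sharper constant (\ref{eq:M-improved}) comes from not bounding the second-order remainder of $F_{\mathrm{y}}$ crudely by $C_{y}\|(\text{displacement})\|^{2}$, but splitting $D^{2}F_{\mathrm{y}}$ into its $\mathrm{x}\mathrm{x}$-, $\mathrm{x}\mathrm{y}$-, $\mathrm{y}\mathrm{y}$-blocks and applying $\|v(\tilde{\mathrm{x}}_{2})-v(\tilde{\mathrm{x}}_{1})\|\le\mathcal{L}\|\tilde{\mathrm{x}}_{2}-\tilde{\mathrm{x}}_{1}\|$ block by block, replacing $C_{y}(1+\mathcal{L}^{2})$ by $C_{y,1}+C_{y,2}\mathcal{L}+C_{y,3}\mathcal{L}^{2}$ from (\ref{eq:c_yi}).

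I expect the main obstacle to be the bookkeeping in the invariance step — in particular tracking the second derivative of the inverse map $\psi^{-1}$, which couples the $C_{x}$- and the $M$-terms, and making sure the $M$-coefficient collapses to exactly $\mu_{2}$ and the geometric factor to exactly $1+\mathcal{L}^{2}$, so that the self-consistency inequality reproduces (\ref{eq:M}) (and, after the block refinement, (\ref{eq:M-improved})) rather than a weaker bound.
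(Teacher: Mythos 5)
Your proposal takes essentially the same route as the paper's proof: both establish an invariance of a "quadratic jet'' condition under the graph transform $\mathcal{G}$ and then iterate it starting from $v_{0}\equiv 0$. The paper phrases the invariant condition in terms of the parabolic cone sets $J_{u}(z,A,M)$ defined in (\ref{eq:J2-def}), while you phrase it as a function class $\mathcal{W}_{M}$; these are equivalent ways of packaging the same estimate, and the self-consistency inequality $M(\xi^{2}-\mu_{2})>(\mathcal{L}C_{x}+C_{y})(1+\mathcal{L}^{2})$ that closes the argument is identical.

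There is, however, one step where your sketch is too strong. You assert that $\xi>0$ together with $\mathrm{Lip}(v)\le\mathcal{L}$ gives
$\|\psi(\tilde{\mathrm{x}}_{2})-\psi(\tilde{\mathrm{x}}_{1})\|\ge\xi\|\tilde{\mathrm{x}}_{2}-\tilde{\mathrm{x}}_{1}\|$ globally, hence that $\psi^{-1}$ is $1/\xi$-Lipschitz. This does not follow from the hypotheses as stated: writing $\psi(\tilde{\mathrm{x}}_{2})-\psi(\tilde{\mathrm{x}}_{1})$ as an integral of $D\psi$ along the segment, you would need a lower bound of the form $m\bigl(\int_{0}^{1}A_{s}\,ds\bigr)\ge\inf_{s}m(A_{s})$, but the minimum singular value of an averaged matrix is \emph{not} bounded below by the pointwise infimum (for $u$ fixed, $A\mapsto\|Au\|$ is convex, giving the wrong inequality). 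The paper avoids this by doing the entire computation at a single base point $z$: it uses the fixed matrix $D_{\mathrm{x}}=\frac{\partial F_{\mathrm{x}}}{\partial\mathrm{x}}(z)+\frac{\partial F_{\mathrm{x}}}{\partial\mathrm{y}}(z)A_{0}$, for which $m(D_{\mathrm{x}})\ge\xi$ \emph{does} hold, and pushes all deviation from the tangent linear map into Taylor remainders bounded by $C_{x},C_{y}$ — see (\ref{eq:x-bound-c2})–(\ref{eq:x1-nonzero}). The resulting cone inclusion then holds only on a small ball $B(0,\delta)$ whose radius may depend on $z$ and $A_{0}$, and the bound on the ratio $\|\mathrm{y}_{1}\|/\|\mathrm{x}_{1}\|^{2}$ is obtained in the limit of vanishing displacement, which is why $M$ is required to be \emph{strictly} larger than the ratio in (\ref{eq:M}). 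Since the final conclusion is a bound on $D^{2}w(\mathrm{x})$, which is a local object, this local argument suffices. Your sketch should either be made local in the same way (carrying the $\|\mathrm{x}\|$-dependent correction terms as the paper does) or should explicitly invoke a cone-expansion lemma from \cite{Geom} in place of the naive lower bound for $\psi$. With that fix, the bookkeeping you anticipate as the "main obstacle'' goes through exactly as in the paper, including the block refinement that yields (\ref{eq:M-improved}).
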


\begin{proof}
For a $s\times u$ matrix $A$, $M\in\mathbb{R},$ and a point $z\in \mathbb{R}%
^{u+s}$ we define a set (see Figure \ref{fig:c2-cone}) 
\begin{figure}[ptb]
\begin{center}
\includegraphics[height=4cm]{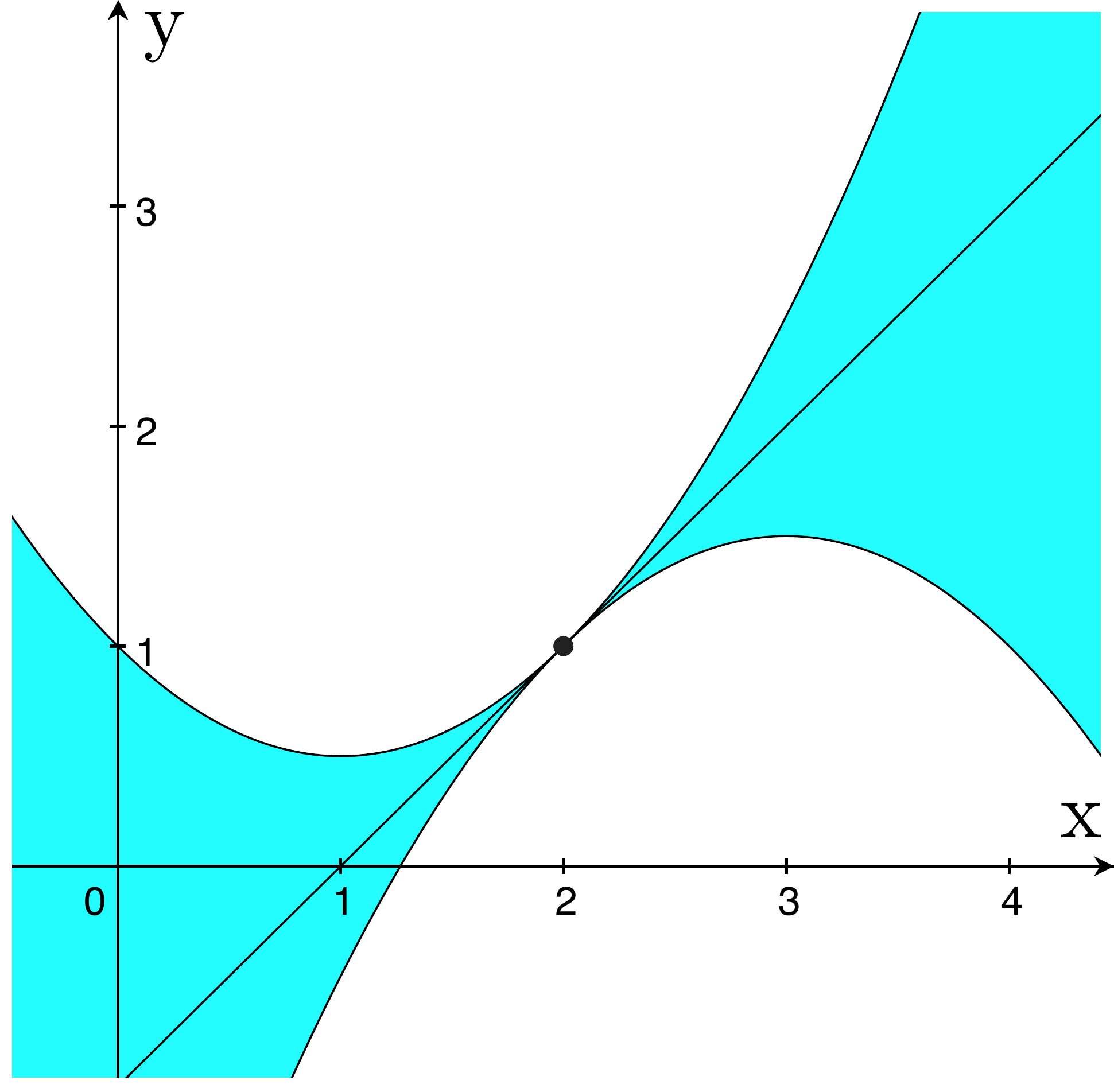}
\end{center}
\caption{The set $J_{u}(z,A,M)$ for $z=\left( 2,1\right) $, $A=1$ and $M=%
\frac{1}{2}$, in blue.}
\label{fig:c2-cone}
\end{figure}
\begin{equation}
J_{u}(z,A,M)=\{z+(\mathrm{x},A\mathrm{x}+\mathrm{y}):\quad\Vert\mathrm{y}%
\Vert\leq M\Vert\mathrm{x}\Vert^{2}\}.  \label{eq:J2-def}
\end{equation}

We shall look for the smallest $M$, such that for all $z\in D$ and all $%
\Vert A_{0}\Vert\leq\mathcal{L}$ there exists a $A_{1}$ such that $\Vert
A_{1}\Vert\leq\mathcal{L}$ and%
\begin{equation}
F(J_{u}(z,A_{0},M))\cap B(0,\delta))\subset J_{u}(F(z),A_{1},M),
\label{eq:jet-inclusion-c2-maps}
\end{equation}
for sufficiently small $\delta>0$ (which might depend on $z$ and $A_{0}$).

From now on we assume that $\Vert A_{0}\Vert\leq\mathcal{L}$.

Let us set 
\begin{align*}
D_{\mathrm{x}} & =\frac{\partial F_{\mathrm{x}}}{\partial\mathrm{x}}(z)+%
\frac{\partial F_{\mathrm{x}}}{\partial\mathrm{y}}(z)A_{0}, \\
D_{\mathrm{y}} & =\frac{\partial F_{\mathrm{y}}}{\partial\mathrm{x}}(z)+%
\frac{\partial F_{\mathrm{y}}}{\partial\mathrm{y}}(z)A_{0}.
\end{align*}

Observe that by the definition of $\xi$ 
\begin{equation}
m(D_{\mathrm{x}})\geq m\left( \frac{\partial F_{\mathrm{x}}}{\partial 
\mathrm{x}}(z)\right) -\left\Vert \frac{\partial F_{\mathrm{x}}}{\partial%
\mathrm{y}}(z)\right\Vert \left\Vert A_{0}\right\Vert \geq \xi>0.
\label{eq:mDx}
\end{equation}

We take 
\begin{equation*}
A_{1}=D_{\mathrm{y}}D_{\mathrm{x}}^{-1}.
\end{equation*}
From (\ref{eq:ratecond}) follows that if $\Vert A_{0}\Vert\leq\mathcal{L}$,
then 
\begin{equation}
\left\Vert A_{1}\right\Vert =\left\Vert D_{\mathrm{y}}D_{\mathrm{x}%
}^{-1}\right\Vert \leq\frac{\left\Vert \frac{\partial F_{\mathrm{y}}}{%
\partial\mathrm{x}}(z)\right\Vert +\left\Vert \frac{\partial F_{\mathrm{y}}}{%
\partial\mathrm{y}}(z)\right\Vert \left\Vert A_{0}\right\Vert }{m\left( 
\frac{\partial F_{\mathrm{x}}}{\partial\mathrm{x}}(z)\right) -\left\Vert 
\frac{\partial F_{\mathrm{x}}}{\partial\mathrm{y}}(z)\right\Vert \left\Vert
A_{0}\right\Vert }\leq\frac{\mathcal{L}\mu_{1}}{\xi}<\mathcal{L}.
\label{eq:A1-lip-L}
\end{equation}

Let $h=\left( \mathrm{x},A_{0}\mathrm{x}+\mathrm{y}\right) $. For $z+h\in
J_{u}(z,A,M)$, by (\ref{eq:J2-def}), we have 
\begin{equation*}
\Vert A_{0}\mathrm{x}+\mathrm{y}\Vert\leq\Vert A_{0}\Vert\Vert\mathrm{x}%
\Vert+M\Vert\mathrm{x}\Vert^{2}\leq\Vert\mathrm{x}\Vert\left( \mathcal{L}%
+M\Vert\mathrm{x}\Vert\right) .
\end{equation*}
Let $(\mathrm{x}_{1},\mathrm{\tilde{y}})=F(z+h)-F(z)$ and let $\mathrm{y}%
_{1}=\mathrm{\tilde{y}}-A_{1}\mathrm{x}_{1}$. Note that%
\begin{equation*}
F(z+h)=F(z)+\left( \mathrm{x}_{1},A_{1}\mathrm{x}_{1}+\mathrm{y}_{1}\right) .
\end{equation*}
Our goal will be to find a bound on $\frac{\Vert\mathrm{y}_{1}\Vert}{\Vert%
\mathrm{x}_{1}\Vert^{2}}$, and to show that $\frac{\Vert\mathrm{y}_{1}\Vert}{%
\Vert\mathrm{x}_{1}\Vert^{2}}\leq M.$ First we need to establish a number of
estimates.

We have 
\begin{align}
\mathrm{x}_{1} & =F_{\mathrm{x}}(z+h)-F_{\mathrm{x}}(z)  \notag \\
& =\frac{\partial F_{\mathrm{x}}}{\partial\mathrm{x}}(z)\mathrm{x}+\frac{%
\partial F_{\mathrm{x}}}{\partial\mathrm{y}}(z)\left( A_{0}\mathrm{x}+%
\mathrm{y}\right) +R_{\mathrm{x},2}\left( z,h\right)  \notag \\
& =D_{\mathrm{x}}\mathrm{x}+\frac{\partial F_{\mathrm{x}}}{\partial\mathrm{y}%
}(z)\mathrm{y}+R_{\mathrm{x},2}\left( z,h\right) .  \label{eq:x-bound-c2}
\end{align}
From (\ref{eq:c_x}) we know that 
\begin{equation}
\left\Vert R_{\mathrm{x},2}\left( z,h\right) \right\Vert \leq
C_{x}\left\Vert h\right\Vert ^{2}\leq C_{x}\left( \left\Vert \mathrm{x}%
\right\Vert ^{2}+\left\Vert A_{0}\mathrm{x}+\mathrm{y}\right\Vert
^{2}\right) \leq R_{x}\Vert\mathrm{x}\Vert^{2},  \label{eq:Rx2-bound-tmp}
\end{equation}
where 
\begin{equation*}
R_{x}\leq C_{x}(1+(\mathcal{L}+M\Vert\mathrm{x}\Vert)^{2}).
\end{equation*}
Thus%
\begin{equation*}
\left\Vert \mathrm{x}_{1}-\left( D_{\mathrm{x}}\mathrm{x}+\frac{\partial F_{%
\mathrm{x}}}{\partial\mathrm{y}}(z)\mathrm{y}\right) \right\Vert \leq
R_{x}\Vert\mathrm{x}\Vert^{2}.
\end{equation*}

Using mirror computations, for $\mathrm{\tilde{y}}$ from (\ref{eq:c_y}) we
obtain 
\begin{equation*}
\left\Vert \mathrm{\tilde{y}}-\left( D_{\mathrm{y}}\mathrm{x}+\frac{\partial
F_{\mathrm{y}}}{\partial\mathrm{y}}(z)\mathrm{y}\right) \right\Vert \leq
R_{y}\Vert\mathrm{x}\Vert^{2},
\end{equation*}
with 
\begin{equation}
R_{y}\leq C_{y}\left( 1+(\mathcal{L}+M\Vert\mathrm{x}\Vert)^{2}\right) .
\label{eq:Ry1}
\end{equation}

We have another possible variants for $R_{y}$ based on (\ref{eq:c_yi}). We
can compute 
\begin{equation}
\mathrm{\tilde{y}}=D_{\mathrm{y}}\mathrm{x}+\frac{\partial F_{\mathrm{y}}}{%
\partial\mathrm{y}}(z)\mathrm{y}+R_{\mathrm{y},2}\left( z,h\right) ,
\label{eq:y-bound-c2}
\end{equation}
with the estimate%
\begin{align*}
\left\Vert R_{\mathrm{y},2}\left( z,h\right) \right\Vert & \leq\sup_{p\in D}%
\frac{1}{2}\left\Vert \frac{\partial^{2}F_{\mathrm{y}}}{\partial \mathrm{x}%
^{2}}(p)\right\Vert \left\Vert \mathrm{x}\right\Vert ^{2}+\sup_{p\in
D}\left\Vert \frac{\partial^{2}F_{\mathrm{y}}}{\partial\mathrm{x}\partial%
\mathrm{y}}(p)\right\Vert \left\Vert \mathrm{x}\right\Vert \left\Vert 
\mathrm{y}\right\Vert \\
& +\sup_{p\in D}\frac{1}{2}\left\Vert \frac{\partial^{2}F_{\mathrm{y}}}{%
\partial\mathrm{y}^{2}}(p)\right\Vert \left\Vert \mathrm{y}\right\Vert ^{2}
\\
& \leq C_{y,1}\left\Vert \mathrm{x}\right\Vert ^{2}+C_{y,2}\left\Vert 
\mathrm{x}\right\Vert \left\Vert A_{0}\mathrm{x}+\mathrm{y}\right\Vert
+C_{y,3}\left\Vert A_{0}\mathrm{x}+\mathrm{y}\right\Vert \\
& \leq\left\Vert \mathrm{x}\right\Vert ^{2}R_{y}^{(2)},
\end{align*}
for%
\begin{equation}
R_{y}^{(2)}\leq C_{y,1}+C_{y,2}(\mathcal{L}+M\Vert\mathrm{x}\Vert )+C_{y,3}(%
\mathcal{L}+M\Vert\mathrm{x}\Vert)^{2}.  \label{eq:Ry2}
\end{equation}

To compute the bound for $\frac{\Vert\mathrm{y}_{1}\Vert}{\Vert\mathrm{x}%
_{1}\Vert^{2}}$ we must ensure that $\left\Vert \mathrm{x}_{1}\right\Vert
\neq0$. From (\ref{eq:x-bound-c2}) and (\ref{eq:Rx2-bound-tmp})%
\begin{equation}
\Vert\mathrm{x}_{1}\Vert\geq\Vert\mathrm{x}\Vert\left( m(D_{x})-\left\Vert 
\frac{\partial F_{\mathrm{x}}}{\partial\mathrm{y}}\right\Vert M\Vert \mathrm{%
x}\Vert-R_{x}\Vert\mathrm{x}\Vert\right) .  \label{eq:x1-nonzero}
\end{equation}
Since by (\ref{eq:mDx}) $m(D_{x})>0,$ we thus see that for sufficiently
small $\left\Vert \mathrm{x}\right\Vert $ (how small may depend on $M$) we
shall have $\Vert\mathrm{x}_{1}\Vert>0.$

From (\ref{eq:x-bound-c2}) we have%
\begin{equation*}
\mathrm{x}=D_{\mathrm{x}}^{-1}\mathrm{x}_{1}-D_{\mathrm{x}}^{-1}\frac{%
\partial F_{\mathrm{x}}}{\partial\mathrm{y}}(z)\mathrm{y}-D_{\mathrm{x}%
}^{-1}R_{\mathrm{x},2}\left( z,h\right)
\end{equation*}
hence by (\ref{eq:y-bound-c2})%
\begin{align*}
\mathrm{\tilde{y}} & =D_{\mathrm{y}}\left( D_{\mathrm{x}}^{-1}\mathrm{x}%
_{1}-D_{\mathrm{x}}^{-1}\frac{\partial F_{\mathrm{x}}}{\partial\mathrm{y}}(z)%
\mathrm{y}-D_{\mathrm{x}}^{-1}R_{\mathrm{x},2}\left( z,h\right) \right) \\
& +\frac{\partial F_{\mathrm{y}}}{\partial\mathrm{y}}(z)\mathrm{y}+R_{%
\mathrm{y},2}\left( z,h\right) \\
& =A_{1}x_{1}+\left( -A_{1}\frac{\partial F_{\mathrm{x}}}{\partial\mathrm{y}}%
(z)+\frac{\partial F_{\mathrm{y}}}{\partial\mathrm{y}}(z)\right) \mathrm{y}
\\
& -A_{1}R_{\mathrm{x},2}\left( z,h\right) +R_{\mathrm{y},2}\left( z,h\right)
\end{align*}
which by (\ref{eq:A1-lip-L}) and (\ref{eq:Rx2-bound-tmp}) gives%
\begin{align}
\left\Vert \mathrm{y}_{1}\right\Vert & =\left\Vert \mathrm{\tilde{y}}-A_{1}%
\mathrm{x}_{1}\right\Vert  \label{eq:y1-norm-c2} \\
& \leq\left\Vert \left( -A_{1}\frac{\partial F_{\mathrm{x}}}{\partial 
\mathrm{y}}(z)+\frac{\partial F_{\mathrm{y}}}{\partial\mathrm{y}}(z)\right) 
\mathrm{y}\right\Vert  \notag \\
& +\Vert A_{1}\Vert R_{x}\Vert\mathrm{x}\Vert^{2}+R_{y}\Vert\mathrm{x}%
\Vert^{2}  \notag \\
& \leq\mu_{2}\left\Vert \mathrm{y}\right\Vert +\mathcal{L}R_{x}\Vert \mathrm{%
x}\Vert^{2}+R_{y}\Vert\mathrm{x}\Vert^{2}  \notag \\
& \leq\left\Vert \mathrm{x}\right\Vert ^{2}\left( \mu_{2}M+\mathcal{L}%
R_{x}+R_{y}\right) .  \notag
\end{align}
Since by (\ref{eq:mDx}) $m(D_{\mathrm{x}})>\xi$ by combining (\ref%
{eq:x1-nonzero}) and (\ref{eq:y1-norm-c2}) we obtain 
\begin{align*}
\frac{\Vert y_{1}\Vert}{\Vert x_{1}\Vert^{2}} & \leq\frac{\left\Vert \mathrm{%
x}\right\Vert ^{2}\left( \mu_{2}M+\mathcal{L}R_{x}+R_{y}\right) }{\Vert%
\mathrm{x}\Vert^{2}\left( \xi-\left\Vert \frac{\partial F_{\mathrm{x}}}{%
\partial\mathrm{y}}\right\Vert M\Vert\mathrm{x}\Vert-R_{x}\Vert \mathrm{x}%
\Vert\right) ^{2}} \\
& =M\frac{\frac{\mu_{2}}{\xi^{2}}+\frac{1}{\xi^{2}M}\left( \mathcal{L}%
R_{x}+R_{y}\right) }{\left( 1-\frac{\Vert\mathrm{x}\Vert}{\xi}\left(
\left\Vert \frac{\partial F_{\mathrm{x}}}{\partial\mathrm{y}}\right\Vert
M+R_{x}\right) \right) ^{2}}.
\end{align*}
We want this ratio to be less than $M$ for sufficiently small $\Vert x\Vert$%
. Therefore we can set $\Vert x\Vert=0$, so we obtain the following
condition 
\begin{equation*}
\frac{\mu_{2}}{\xi^{2}}+\frac{1}{\xi^{2}M}\left( \mathcal{L}%
R_{x}+R_{y}\right) <1.
\end{equation*}
This condition follows from (\ref{eq:M}) for $R_{y}$ given by (\ref{eq:Ry1}%
). For $R_{y}=R_{y}^{(2)}$, where $R_{y}^{(2)}$ was defined in (\ref{eq:Ry2}%
), above condition follows from (\ref{eq:M-improved}).

By our assumption (\ref{eq:w-G-limit}), we know that $w=\lim_{n\rightarrow
+\infty}\mathcal{G}^{n}\left( v_{0}\right) $. Taking $A_{0}=0$ we see that
for any $z\in\mathrm{graph}\left( v_{0}\right) $ and for sufficiently small $%
\delta$%
\begin{equation*}
\mathrm{graph}\left( v_{0}\right) \cap B\left( z,\delta\right) \subset
J_{u}\left( z,A_{0},M\right) .
\end{equation*}
By (\ref{eq:jet-inclusion-c2-maps}),%
\begin{equation*}
\mathrm{graph}\left( \mathcal{G}\left( v_{0}\right) \right) \cap B\left(
z,\delta\right) \subset J_{u}\left( F(z),A_{1},M\right) .
\end{equation*}
Applying this argument inductively, for any $z\in\mathrm{graph}\left(
w\right) ,$ taking $A=Dw\left( z\right) $, 
\begin{equation}
\mathrm{graph}\left( w\right) \cap B\left( z,\delta\right) \subset
J_{u}\left( z,A,M\right) .  \label{eq:graph-jet-c2}
\end{equation}
From (\ref{eq:graph-jet-c2}) follows (\ref{eq:c2-bound-jet-claim}), which
concludes our proof.
\end{proof}

\begin{remark}
\label{rem:flat-manif}Observe that in the case of totally flat invariant
manifold $($\textrm{$x$}$,0)$ we have $F_{\mathrm{y}}(\mathrm{x},\mathrm{y}%
)=g(\mathrm{x},\mathrm{y})\mathrm{y}$ and $\mathcal{L}$ could be taken as
small as we want.

In such case we obtain from (\ref{eq:M}) the bound $M\approx\frac {C_{%
\mathrm{y}}}{\xi^{2}-\mu}$, which might be quite large as $C_{\mathrm{y}}$
depends on $\frac{\partial^{2}F_{\mathrm{y}}}{\partial\mathrm{x}\partial%
\mathrm{y}}$ and $\frac{\partial^{2}F_{\mathrm{y}}}{\partial \mathrm{y}^{2}}%
, $ which might be nonzero even on our flat manifold.

When using (\ref{eq:M-improved}) we obtain $M\approx\frac{C_{\mathrm{y},1}}{%
\xi^{2}-\mu}$, where $C_{\mathrm{y},1}$ is to be expected to be very small,
because $D_{\mathrm{x}}^{2}F_{\mathrm{y}}(\mathrm{x},\mathrm{y})=(D_{\mathrm{%
x}}^{2}g(\mathrm{x},\mathrm{y}))\mathrm{y}$, hence it vanishes on the
invariant manifold.
\end{remark}

Using Theorem \ref{th:unstable-cones} we can obtain estimates on the partial
derivatives of $w$ using the following lemma.

\begin{lemma}
Assume that $\Vert D^{2}w\left( \mathrm{x}\right) (h,h)\Vert\leq2M\Vert
h\Vert^{2}$. Then in orthogonal coordinates $(\mathrm{x}_{1},\dots ,\mathrm{x%
}_{n})$ holds 
\begin{equation*}
\left\Vert \frac{\partial^{2}w}{\partial\mathrm{x}_{i}\partial\mathrm{x}_{j}}%
\right\Vert \leq2M,\quad i,j=1,\dots,n.
\end{equation*}
\end{lemma}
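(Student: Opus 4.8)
The plan is to derive the bound on each second partial derivative directly from the diagonal estimate $\Vert D^{2}w(\mathrm{x})(h,h)\Vert\le 2M\Vert h\Vert^{2}$ by polarization. Let $(e_{1},\dots ,e_{n})$ be the orthonormal basis associated with the orthogonal coordinates $(\mathrm{x}_{1},\dots ,\mathrm{x}_{n})$, so that $\frac{\partial ^{2}w}{\partial \mathrm{x}_{i}\partial \mathrm{x}_{j}}(\mathrm{x})=D^{2}w(\mathrm{x})(e_{i},e_{j})$. At each point $\mathrm{x}$ where the estimate is available, $w$ is twice differentiable and hence $D^{2}w(\mathrm{x})$ is a symmetric $\mathbb{R}^{s}$-valued bilinear form on $\mathbb{R}^{n}$; this is the only structure the argument uses. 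For the diagonal entries $i=j$ the claim is immediate: taking $h=e_{i}$ in the hypothesis gives $\bigl\Vert \frac{\partial ^{2}w}{\partial \mathrm{x}_{i}^{2}}\bigr\Vert =\Vert D^{2}w(e_{i},e_{i})\Vert \le 2M\Vert e_{i}\Vert ^{2}=2M$.

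For $i\neq j$ I would apply the polarization identity for a symmetric bilinear map,
\[
D^{2}w(e_{i},e_{j})=\tfrac14\bigl(D^{2}w(e_{i}+e_{j},e_{i}+e_{j})-D^{2}w(e_{i}-e_{j},e_{i}-e_{j})\bigr),
\]
then take norms, use the triangle inequality, and bound each of the two quadratic terms by the hypothesis. Since the coordinates are orthogonal, $\Vert e_{i}\pm e_{j}\Vert ^{2}=\Vert e_{i}\Vert ^{2}+\Vert e_{j}\Vert ^{2}=2$, so the right-hand side is at most $\tfrac14(2M\cdot 2+2M\cdot 2)=2M$, which yields $\bigl\Vert \frac{\partial ^{2}w}{\partial \mathrm{x}_{i}\partial \mathrm{x}_{j}}\bigr\Vert \le 2M$ as well. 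Combining the two cases gives the stated estimate for all $i,j$.

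There is essentially no obstacle here; the only point meriting a word of care is that polarization is being applied to a vector-valued (rather than scalar) bilinear form, so the naive identity $\sup_{\Vert h\Vert =\Vert k\Vert =1}\Vert B(h,k)\Vert =\sup_{\Vert h\Vert =1}\Vert B(h,h)\Vert$ need not hold and is not invoked; instead the explicit identity above, used only for the specific orthogonal pairs $e_{i},e_{j}$, produces the clean constant $2M$ with no loss. The hypothesis of the lemma is exactly the conclusion (\ref{eq:c2-bound-jet-claim}) of Theorem~\ref{th:unstable-cones}, so this lemma is what converts that operator-type bound into bounds on the individual coordinate second derivatives used in the computer assisted estimates.
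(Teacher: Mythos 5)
Your proof is correct and follows the same approach the paper uses: apply the polarization identity for the symmetric bilinear form $D^{2}w$ with $p=e_{i}$, $q=e_{j}$, note $\Vert e_{i}\pm e_{j}\Vert^{2}=2$, and bound each quadratic term by the hypothesis to get the constant $2M$. In fact your write-up is cleaner than the paper's: the paper states ``let us set $p=e_{i}+e_{j}$ and $q=e_{i}-e_{j}$,'' which is inconsistent with the estimate it then carries out (the computation there is exactly the one you perform, with $p=e_{i}$, $q=e_{j}$); your version also covers the $i=j$ case explicitly, which the paper leaves implicit. No gaps.
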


\begin{proof}
Let us denote by $W$ the symmetric map $D^{2}w$. Let $e_{1},\dots,e_{n}$ be
a basis corresponding our coordinates. Then 
\begin{equation*}
W(e_{i},e_{j})=\frac{\partial^{2}F}{\partial\mathrm{x}_{i}\partial \mathrm{x}%
_{j}}.
\end{equation*}
Our task is to recover the map $W$ knowing only the behavior on the
diagonal. This is accomplished using the following identity 
\begin{equation*}
W(p+q,p+q)-W(p-q,p-q)=4W(p,q).
\end{equation*}
Let us set $p=e_{i}+e_{j}$ and $q=e_{i}-e_{j}$. Observe that $\Vert p\Vert
^{2}=\Vert q\Vert^{2}=2$. We have 
\begin{align*}
4\left\Vert W(e_{i},e_{j})\right\Vert & \leq\left\Vert
W(e_{i}+e_{j},e_{i}+e_{j})\right\Vert +\left\Vert
W(e_{i}-e_{j},e_{i}-e_{j})\right\Vert \\
& \leq2\cdot2M\sqrt{2}^{2},
\end{align*}
which concludes our proof.
\end{proof}

%TCIDATA{Version=5.00.0.2606}
%TCIDATA{LaTeXparent=0,0,MMFedit.tex}

\section{Center-unstable manifolds for ODEs\label{sec:ODE-Wcu}}

In this section we show how to establish the existence of center unstable
manifolds for ODEs. The results will follow from the ones established for
maps in section \ref{sec:wcu-maps}. To obtain our results, we will consider
the time shift map along the solution of the ODE. Our objective though will
be to reformulate the conditions to obtain our results based on assumptions
on the vector field, rather than to integrate the ODE.

\subsection{Definitions and setup}

We consider an ODE%
\begin{equation}
q^{\prime }=f(q)  \label{eq:ode-Wcu}
\end{equation}%
where%
\begin{equation*}
f:\Lambda \times \mathbb{R}^{u}\times \mathbb{R}^{s}\rightarrow \mathbb{R}%
^{c}\times \mathbb{R}^{u}\times \mathbb{R}^{s}.
\end{equation*}%
We denote by $\Phi \left( t,q\right) $ the flow induced by (\ref{eq:ode-Wcu}%
).

We shall consider a set%
\begin{equation*}
D=\Lambda \times \overline{B}_{u}\left( R\right) \times \overline{B}%
_{s}\left( R\right) .
\end{equation*}

\begin{definition}
\label{def:Wcu-ode}We define the center-unstable set of (\ref{eq:ode-Wcu})
in $D$ as%
\begin{equation*}
W_{\mathrm{loc},D}^{cu}=\{z:\Phi \left( t,z\right) \in D\text{ for all }%
t<0\}.
\end{equation*}
\end{definition}

Since the set $D$ will remain fixed throughout the discussion, from now on
we will simplify notation by writing $W^{cu}$ instead of $W_{\mathrm{loc}%
,D}^{cu}.$

As in section \ref{sec:wcu-maps-setup}, we consider a constant $L\in \left( 
\frac{2R}{R_{\Lambda }},1\right) ,$ and define the stable fiber analogously
to Definition \ref{def:Wu-fiber}.

\begin{definition}
\label{def:ode-fiber}Assume that $z\in W^{cu}$. We define the unstable fiber
of $z$ as%
\begin{equation*}
W_{z}^{u}=\{p\in D:\Phi \left( t,p\right) \in J_{u}\left( \Phi \left(
t,z\right) ,1/L\right) \cap D,\text{ for all }t<0\}.
\end{equation*}
\end{definition}

Let us introduce the following constants (compare with constants from
section \ref{sec:wcu-maps-setup} for maps)%
\begin{align*}
\overrightarrow{\mu _{s,1}}& =\sup_{z\in D}\left\{ l\left( \frac{\partial
f_{y}}{\partial y}(z)\right) +\frac{1}{L}\left\Vert \frac{\partial f_{y}}{%
\partial (\lambda ,x)}(z)\right\Vert \right\} , \\
\overrightarrow{\mu _{s,2}}& =\sup_{z\in D}\left\{ l\left( \frac{\partial
f_{y}}{\partial y}\left( z\right) \right) +L\left\Vert \frac{\partial
f_{\left( \lambda ,x\right) }}{\partial y}(z)\right\Vert \right\} ,
\end{align*}%
\begin{align*}
\overrightarrow{\xi _{u,1}}& =\inf_{z\in D}\left\{ m_{l}\left( \frac{%
\partial f_{x}}{\partial x}(z)\right) -\frac{1}{L}\left\Vert \frac{\partial
f_{x}}{\partial \left( \lambda ,y\right) }(z)\right\Vert \right\} , \\
\overrightarrow{\xi _{u,1,P}}& =\inf_{z\in D}m_{l}\left( \frac{\partial f_{x}%
}{\partial x}(P(z))\right) -\frac{1}{L}\sup_{z\in D}\left\Vert \frac{%
\partial f_{x}}{\partial \left( \lambda ,y\right) }(z)\right\Vert ,
\end{align*}%
\begin{align}
\overrightarrow{\mu _{cs,1}}& =\sup_{z\in D}\left\{ l\left( \frac{\partial
f_{\left( \lambda ,y\right) }}{\partial \left( \lambda ,y\right) }(z)\right)
+L\left\Vert \frac{\partial f_{\left( \lambda ,y\right) }}{\partial x}%
(z)\right\Vert \right\} ,  \notag \\
\overrightarrow{\mu _{cs,2}}& =\sup_{z\in D}\left\{ l\left( \frac{\partial
f_{\left( \lambda ,y\right) }}{\partial \left( \lambda ,y\right) }(z)\right)
+\frac{1}{L}\left\Vert \frac{\partial f_{x}}{\partial \left( \lambda
,y\right) }(z)\right\Vert \right\} ,  \notag
\end{align}%
\begin{align*}
\overrightarrow{\xi _{cu,1}}& =\inf_{z\in D}\left\{ m_{l}\left( \frac{%
\partial f_{(\lambda ,x)}}{\partial (\lambda ,x)}(z)\right) -L\left\Vert 
\frac{\partial f_{(\lambda ,x)}}{\partial y}(z)\right\Vert \right\} , \\
\overrightarrow{\xi _{cu,1,P}}& =\inf_{z\in D}m_{l}\left( \frac{\partial
f_{(\lambda ,x)}}{\partial (\lambda ,x)}(P(z))\right) -L\sup_{z\in
D}\left\Vert \frac{\partial f_{(\lambda ,x)}}{\partial y}(z)\right\Vert .
\end{align*}%
The arrow is used to emphasize that the constants are computed for the
vector field.

Analogously to the case of maps (Definition \ref{def:rate-conditions}) we
define the rate conditions for ODEs as follows.

\begin{definition}
\label{def:rate-cond-ode}We say that the vector field $f$ satisfies rate
conditions of order $k\geq 1$ if for all $k\geq j\geq 1$ holds 
\begin{equation}
\overrightarrow{\mu _{s,1}}<0<\overrightarrow{\xi _{u,1,P}},
\label{eq:rate-cond-1-ode}
\end{equation}%
\begin{gather}
\overrightarrow{\mu _{cs,1}}<\overrightarrow{\xi _{u,1,P}},\qquad \qquad 
\overrightarrow{\mu _{s,1}}<\overrightarrow{\xi _{cu,1,P}},
\label{eq:rate-cond-2-ode} \\
\overrightarrow{\mu _{s,2}}<\left( j+1\right) \overrightarrow{\xi _{cu,1}}%
,\qquad \overrightarrow{\mu _{cs,2}}<\overrightarrow{\xi _{u,1}}.
\label{eq:rate-cond-3-ode}
\end{gather}
\end{definition}

We now define the notion of an isolating block.

\begin{definition}
\label{def:isolating-segment}We say that $D=\Lambda \times \overline{B}%
_{u}\left( R\right) \times \overline{B}_{s}\left( R\right) $ is an isolating
block for (\ref{eq:ode-Wcu}) if

\begin{enumerate}
\item For any $q\in \Lambda \times \partial \overline{B}_{u}\left( R\right)
\times \overline{B}_{s}\left( R\right) $, 
\begin{equation*}
\left( \pi _{x}f(q)|\pi _{x}q\right) >0.
\end{equation*}

\item For any $q\in \Lambda \times \overline{B}_{u}\left( R\right) \times
\partial \overline{B}_{s}\left( R\right) $,%
\begin{equation*}
\left( \pi _{y}f(q)|\pi _{y}q\right) <0.
\end{equation*}
\end{enumerate}
\end{definition}

Isolating blocks are important constructs in the Conley index theory \cite%
{MM}. Intuitively, in Definition \ref{def:isolating-segment} the set $%
\Lambda \times \partial \overline{B}_{u}\left( R\right) \times \overline{B}%
_{s}\left( R\right) $ plays the role of the exit set, and $\Lambda \times 
\overline{B}_{u}\left( R\right) \times \partial \overline{B}_{s}\left(
R\right) $ of the entry set. Isolating blocks will play the same role as the
covering condition for maps (Definition \ref{def:covering}).

\begin{theorem}
\label{th:main-ode}Let $k\geq 1$. Assume that $f$ is $C^{k+1}$ and satisfies
rate conditions of order $k$. Assume also that $D=\Lambda \times \overline{B}%
_{u}\left( R\right) \times \overline{B}_{s}\left( R\right) $ is an isolating
segment for $f$. Then the center-unstable set $W^{cu}$ in $D$ is a $C^{k}$
manifold, which satisfies the properties listed in Theorem \ref{th:cu-maps}.

The manifold $W^{cu}$ is foliated by invariant fibers $W_{z}^{u}$, which are
graphs of $C^{k}$ functions (as in Theorem \ref{th:cu-maps}). Moreover, for $%
C=2R\left( 1+1/L\right) ,$ \textbf{\ }%
\begin{eqnarray}
W_{z}^{u} &=&\{p\in W^{cu}:\text{ }\Phi \left( -t,p\right) \in D\text{ for
all }t>0\text{,}  \label{eq:Wuz-covergence-ode} \\
&&\left. \left\Vert \Phi \left( -t,p\right) -\Phi \left( -t,z\right)
\right\Vert \leq Ce^{-t\overrightarrow{\xi _{u,1,P}}}\text{ for all }%
t>0\right\} .  \notag
\end{eqnarray}
\end{theorem}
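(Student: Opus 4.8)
The plan is to reduce the ODE statement to the already-established map result (Theorem \ref{th:cu-maps}) by passing to the time-$h$ shift map $F = \Phi(h,\cdot)$ for a sufficiently small $h>0$, and then verify that the three hypotheses of Theorem \ref{th:cu-maps} — covering conditions, rate conditions of order $k$, and backward cone conditions — follow from the assumptions on the vector field (rate conditions for $f$ and the isolating block property). The key technical bridge is Lemmas \ref{lem:lognorm-conv} and \ref{lem:ml-conv}, which give $\|I+hA\| = 1 + h\, l(A) + O(h^2)$ and $m(I+hA) = 1 + h\, m_l(A) + O(h^2)$ uniformly on compacts, together with the chain-rule observation that $DF(z) = D\Phi(h,z) = I + h\,Df(z) + O(h^2)$, where the $O(h^2)$ is uniform over $D$ because $D$ is compact and $f$ is $C^{k+1}$.

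First I would fix $h>0$ small and set $F=\Phi(h,\cdot)$. Using $D\Phi(h,z) = I + h\,Df(z)+O(h^2)$ and the block structure of $Df$ into $(\lambda,x,y)$ components, I would estimate each of the map constants $\mu_{s,1},\mu_{s,2},\xi_{u,1},\xi_{u,1,P},\mu_{cs,1},\mu_{cs,2},\xi_{cu,1},\xi_{cu,1,P}$ appearing in Definition \ref{def:rate-conditions}. The norm-type terms (the off-diagonal blocks $\partial F_\cdot/\partial\cdot$) are $O(h)$ and contribute $h$ times the corresponding off-diagonal norm of $Df$ plus $O(h^2)$; the diagonal terms produce $1 + h\,l(\cdot) + O(h^2)$ (for the $\mu$'s, via Lemma \ref{lem:lognorm-conv}) and $1 + h\,m_l(\cdot) + O(h^2)$ (for the $\xi$'s, via Lemma \ref{lem:ml-conv}). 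Consequently, for instance, $\mu_{s,1}^{F} = 1 + h\,\overrightarrow{\mu_{s,1}} + O(h^2)$ and $\xi_{u,1,P}^{F} = 1 + h\,\overrightarrow{\xi_{u,1,P}} + O(h^2)$, and similarly for the others. Then the map rate conditions \eqref{eq:rate-cond-1}--\eqref{eq:rate-cond-3} read, to first order in $h$, exactly as $\overrightarrow{\mu_{s,1}}<0<\overrightarrow{\xi_{u,1,P}}$, $\overrightarrow{\mu_{cs,1}}<\overrightarrow{\xi_{u,1,P}}$, etc. — e.g. $\mu_{s,2}^F/(\xi_{cu,1}^F)^{j+1} = 1 + h(\overrightarrow{\mu_{s,2}} - (j+1)\overrightarrow{\xi_{cu,1}}) + O(h^2) < 1$ precisely when $\overrightarrow{\mu_{s,2}} < (j+1)\overrightarrow{\xi_{cu,1}}$. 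So for all sufficiently small $h$, Definition \ref{def:rate-cond-ode} implies Definition \ref{def:rate-conditions} for $F$. For the covering conditions, I would use the isolating block property: condition 1 of Definition \ref{def:isolating-segment} says the vector field points strictly outward on $\Lambda\times\partial\overline B_u(R)\times\overline B_s(R)$ and condition 2 says it points strictly inward on $\Lambda\times\overline B_u(R)\times\partial\overline B_s(R)$; since these are strict and $D$ is compact, for small $h$ the time-$h$ map $F$ has the required exit/entry behaviour, and the homotopy $h_\alpha$ in Definition \ref{def:covering} can be taken as a straight-line homotopy (in local charts) from $F|_U$ to its linearization followed by killing the $y$-component, with $\lambda^\ast=\pi_\lambda F(z)$ and $A$ the linearization of $F_x$ in the $x$-direction; the expansion in the unstable direction (from $\overrightarrow{\xi_{u,1,P}}>0$) guarantees $A(\partial B_u(R))\subset \mathbb R^u\setminus\overline B_u(R)$. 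Finally, backward cone conditions hold automatically for small $h$: as remarked after Definition \ref{def:back-cc}, $F=\Phi(h,\cdot)$ is invertible and $C^1$-close to the identity, so if $F(z_1)\in J_s(F(z_2),1/L)$ then $z_1 = F^{-1}(F(z_1))$ and $z_2=F^{-1}(F(z_2))$ are within $1/L$-cones because $DF^{-1}$ is within $O(h)$ of the identity and $1/L > 2R/R_\Lambda\cdot(1/L)$ leaves room; more carefully one checks the cone $J_s$ is preserved under maps $O(h)$-close to identity since the relevant slope bound is strict.

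Having verified all three hypotheses for $F$, Theorem \ref{th:cu-maps} yields that $W^{cu}(F)$ is a $C^k$ graph, foliated by $C^k$ unstable fibers $W^u_z(F)$ with the stated Lipschitz constants, and with the convergence estimate $\|F^{-n}(p)-F^{-n}(z)\| \le C\,(\xi_{u,1,P}^F)^{-n}$. It then remains to identify these objects with the ODE objects from Definitions \ref{def:Wcu-ode} and \ref{def:ode-fiber} and to convert the discrete convergence rate into the continuous one. For the identification: $W^{cu}_{\mathrm{loc},D}$ for the flow equals $W^{cu}$ for $F=\Phi(h,\cdot)$ — the inclusion $\{z:\Phi(t,z)\in D\ \forall t<0\}\subset\{z:F^{-n}(z)\in D\ \forall n\}$ is immediate, and the reverse inclusion uses that $D$ is an isolating block so that a trajectory which lies in $D$ at all times $t=-nh$ cannot leave $D$ in between (an exit through $\Lambda\times\partial\overline B_u(R)\times\overline B_s(R)$ is irreversible by condition 1, so it could not return); the independence of $h$ of the resulting set shows it is genuinely a flow-invariant object. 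The same argument identifies $W^u_z$. For the convergence rate: given $t>0$ write $t = nh + r$ with $0\le r<h$; then $\Phi(-t,p) = F^{-n}(\Phi(-r,p))$, and combining the discrete bound with a uniform Lipschitz (Gronwall) bound on $\Phi(-r,\cdot)$ over $r\in[0,h]$ gives $\|\Phi(-t,p)-\Phi(-t,z)\| \le C'\,(\xi_{u,1,P}^F)^{-n}$; since $\xi_{u,1,P}^F = 1 + h\,\overrightarrow{\xi_{u,1,P}} + O(h^2)$, we have $(\xi_{u,1,P}^F)^{-n} = (1+h\overrightarrow{\xi_{u,1,P}}+O(h^2))^{-t/h + O(1)} \le C''\, e^{-t\,\overrightarrow{\xi_{u,1,P}}}$ after adjusting the constant — but to land exactly on $C=2R(1+1/L)$ as stated, one should instead let $h\to 0$ (i.e. apply the map theorem for each small $h$ and note that the limiting fiber characterization is $h$-independent) or argue directly from the cone/Gronwall inequality on the flow that the contraction rate in backward time is governed by $\overrightarrow{\xi_{u,1,P}}$ with the geometric constant $C=2R(1+1/L)$ coming from the diameter of the $1/L$-unstable cone intersected with $D$. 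I would present this last step by taking the infimum over admissible $h$, or equivalently by running the Gronwall-type estimate of Theorem \ref{th:log-norm-ode-lower-bound} directly inside the unstable cone. The main obstacle I anticipate is precisely this last bookkeeping — getting the sharp constant $C=2R(1+1/L)$ and the clean exponential rate $e^{-t\overrightarrow{\xi_{u,1,P}}}$ rather than a rate with an uncontrolled $O(h)$ correction — which is why the cleanest route is to observe that $W^{cu}$ and $W^u_z$, being defined purely in terms of the flow, do not depend on the auxiliary $h$, and then to derive the continuous-time decay estimate intrinsically from the cone conditions and the logarithmic-norm lower bound rather than by resumming the discrete estimate.
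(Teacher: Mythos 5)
Your overall strategy matches the paper's: pass to the time-$h$ shift $F_h=\Phi(h,\cdot)$, verify covering, rate, and backward cone conditions for $F_h$, invoke Theorem~\ref{th:cu-maps}, identify the flow-defined objects with the map-defined ones via the isolating block property, and recover the exponential rate. The paper carries out the identification step and the final limit exactly as you propose in your ``cleanest route'': it chooses $h=t/n$, uses the $h$-independence of the constant $C=2R(1+1/L)$ in~(\ref{eq:Wuz-convergence-maps}), and lets $n\to\infty$ so that $(1+\frac{t}{n}\overrightarrow{\xi_{u,1,P}}+O((t/n)^2))^{-n}\to e^{-t\overrightarrow{\xi_{u,1,P}}}$; your ``$t=nh+r$ and Gronwall on the remainder $r$'' attempt indeed leaves an $h$-dependent correction and is rightly discarded.

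Two genuine gaps to fix. First, in the identification of $W^{cu}(F_h)$ with $W^{cu}(\Phi)$ you name the wrong boundary: a \emph{backward} trajectory of the flow exits $D$ through the \emph{stable} boundary $D^{+}=\Lambda\times\overline{B}_u(R)\times\partial\overline{B}_s(R)$ (condition~2 of Definition~\ref{def:isolating-segment} says $-f$ points outward in $y$ there), not through $\Lambda\times\partial\overline{B}_u(R)\times\overline{B}_s(R)$, which is the forward exit set and a backward entry set by condition~1. The contradiction runs: once a backward trajectory touches $D^{+}$, it stays outside $D$ for a uniformly positive time $\delta>0$, and choosing $h<\delta$ makes the next sampled point $\Phi(-(n+1)h,z)$ land outside $D$, contradicting $z\in W^{cu}(F_h)$.

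Second, your verification of backward cone conditions by ``$F_h$ is $O(h)$-close to the identity'' is not sufficient and the paper does not rely on it. Closeness to the identity only gives $\|\pi_{\lambda,x}(z_1-z_2)\|\le\frac{1}{L}\|\pi_y(z_1-z_2)\|+O(h\|z_1-z_2\|)$, which does not re-enter the closed cone; a small rotation-like perturbation of the identity would violate the conclusion. What is actually needed is the hyperbolicity encoded in the rate conditions: by Theorem~\ref{th:ODE-rate-verif} the map $F_t$ satisfies the map rate conditions for small $t$, and Lemma~\ref{lem:Jsc-propagation} then yields the \emph{strict} forward invariance of $\overline{J_s^c(\cdot,1/L)}$ (intersected with a good chart), which is what the contradiction argument in the proof of Theorem~\ref{th:ode-back-cc} uses. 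Closeness to the identity is used only for the auxiliary chart estimate~(\ref{eq:back-jet-projection}), not for the cone preservation itself.
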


\begin{proof}
The proof is given in section \ref{sec:wcu-ode-proof}.
\end{proof}

The proof of Theorem \ref{th:main-ode} will follow from Theorem \ref%
{th:cu-maps}, applied to a time shift along the trajectory. In section \ref%
{subsec:ode-verif-cc-rc} we will show how rate conditions (for maps; as in
Definition \ref{def:rate-conditions}) follow from Definition \ref%
{def:rate-cond-ode} for the time shift map along the trajectory. In section %
\ref{def:rate-cond-ode} we will show how the covering condition (Definition %
\ref{def:covering}) follows from Definition \ref{def:isolating-segment}.
This will lead to the proof of Theorem \ref{th:main-ode} in section \ref%
{sec:wcu-ode-proof}.

\subsection{Verification of rate conditions\label{subsec:ode-verif-cc-rc}}

We consider an ODE%
\begin{equation}
z^{\prime }=f\left( z\right) ,  \label{eq:ode-rates}
\end{equation}%
where $z=\left( x,y\right) $ and $f=\left( f_{x},f_{y}\right) $. Consider a
shift by $h>0$ along the solution of (\ref{eq:ode-rates}), which we will
denote by $\Phi (h,z)=\left( \Phi _{x}(h,z),\Phi _{y}(h,z)\right) $. We will
show how to establish rate conditions for a map $F\left( z\right) =\Phi
(h,z) $, for sufficiently small (fixed) $h.$

The results obtained in this section will be applicable for the setting
where:

\begin{itemize}
\item $\mathrm{x}=x,$ $\mathrm{y}=\left( \lambda ,y\right) ,$

\item $\mathrm{x}=\left( \lambda ,x\right) ,$ $\mathrm{y}=y,$
\end{itemize}

Similarly, in the case of a family of maps (as discussed in Section \ref%
{sec:maps-param}), which depend on parameters, we can have:

\begin{itemize}
\item $\mathrm{x}=x,$ $\mathrm{y}=\left( \varepsilon ,\lambda ,y\right) $,

\item $\mathrm{x}=\left( \varepsilon ,\lambda ,x\right) ,$ $\mathrm{y}=y$.
\end{itemize}

We define%
\begin{eqnarray*}
\overrightarrow{\xi \left( M\right) } &=&\inf_{z\in D}\left\{ m_{l}\left( 
\frac{\partial f_{\mathrm{x}}}{\partial \mathrm{x}}(z)\right) -M\left\Vert 
\frac{\partial f_{\mathrm{x}}}{\partial \mathrm{y}}(z)\right\Vert \right\} ,
\\
\overrightarrow{\xi _{P}\left( M\right) } &=&\inf_{z\in D}m_{l}\left( \frac{%
\partial f_{\mathrm{x}}}{\partial \mathrm{x}}(P(z))\right) -M\sup_{z\in
D}\left\Vert \frac{\partial f_{\mathrm{x}}}{\partial \mathrm{y}}%
(z)\right\Vert , \\
\overrightarrow{\mu _{1}(M)} &=&\sup_{z\in D}\left\{ l\left( \frac{\partial
f_{\mathrm{y}}}{\partial \mathrm{y}}(z)\right) +M\left\Vert \frac{\partial
f_{\mathrm{y}}}{\partial \mathrm{x}}(z)\right\Vert \right\} , \\
\overrightarrow{\mu _{2}(M)} &=&\sup_{z\in D}\left\{ l\left( \frac{\partial
f_{\mathrm{y}}}{\partial \mathrm{y}}(h,z)\right) +M\left\Vert \frac{\partial
f_{\mathrm{x}}}{\partial \mathrm{y}}(h,z)\right\Vert \right\} .
\end{eqnarray*}%
We also consider the following quantities, which are defined for a given $%
h>0 $ 
\begin{eqnarray*}
\xi (h,M) &=&\inf_{z\in D}\left\{ m\left( \frac{\partial \Phi _{\mathrm{x}}}{%
\partial \mathrm{x}}(h,z)\right) -M\left\Vert \frac{\partial \Phi _{\mathrm{x%
}}}{\partial \mathrm{y}}(h,z)\right\Vert \right\} , \\
\xi _{P}\left( h,M\right) &=&\inf_{z\in D}m\left( \frac{\partial \Phi _{%
\mathrm{x}}}{\partial \mathrm{x}}(h,P(z))\right) -M\sup_{z\in D}\left\Vert 
\frac{\partial \Phi _{\mathrm{x}}}{\partial \mathrm{y}}(h,z)\right\Vert , \\
\mu _{1}(h,M) &=&\sup_{z\in D}\left\{ \left\Vert \frac{\partial \Phi _{%
\mathrm{y}}}{\partial \mathrm{y}}(h,z)\right\Vert +M\left\Vert \frac{%
\partial \Phi _{\mathrm{y}}}{\partial \mathrm{x}}(h,z)\right\Vert \right\} ,
\\
\mu _{2}(h,M) &=&\sup_{z\in D}\left\{ \left\Vert \frac{\partial \Phi _{%
\mathrm{y}}}{\partial \mathrm{y}}(h,z)\right\Vert +M\left\Vert \frac{%
\partial \Phi _{\mathrm{x}}}{\partial \mathrm{y}}(h,z)\right\Vert \right\} .
\end{eqnarray*}%
(in the application we will choose $M$ as $L$ or $1/L$, depending on which
of the rate conditions (\ref{eq:rate-cond-1}--\ref{eq:rate-cond-3}) we wish
to verify).

Below theorem can be used to establish the fact that rate conditions (see
Definition \ref{def:rate-conditions}) hold for the time shift map $\Phi
(h,\cdot )$.

\begin{theorem}
\label{th:ODE-rate-verif}Let $M,M_{1},M_{2}>0$. We have the following
conditions:

\begin{enumerate}
\item We have 
\begin{eqnarray}
\xi (h,M) &=&1+h\overrightarrow{\xi (M)}+O(h^{2})  \label{eq:ode-verif-xi} \\
\xi _{P}\left( h,M\right) &=&1+h\overrightarrow{\xi _{P}\left( M\right) }%
+O(h^{2}),  \label{eq:ode-verif-xiP} \\
\mu _{1}(h,M) &=&1+h\overrightarrow{\mu _{1}(M)}+O(h^{2}),
\label{eq:ode-verif-mu1} \\
\mu _{2}(h,M) &=&1+h\overrightarrow{\mu _{2}(M)}+O(h^{2}).
\label{eq:ode-verif-mu2}
\end{eqnarray}

\item \label{itm:ODE-rate-verif-1} If for $j\geq 0$ 
\begin{equation}
\overrightarrow{\mu _{2}(M_{1})}<\left( j+1\right) \overrightarrow{\xi
\left( M_{2}\right) },  \label{eq:ode-rate-cond-1-prf}
\end{equation}%
then for sufficiently small $h_{0}>0$, and for any $h\in \left(
0,h_{0}\right) $,%
\begin{equation*}
\frac{\mu _{2}(h,M_{1})}{\xi (h,M_{2})^{j+1}}<1.
\end{equation*}

\item \label{itm:ODE-rate-verif-2}If $\overrightarrow{\mu _{1}(M_{1})}<%
\overrightarrow{\xi _{P}\left( M_{2}\right) }$ then for sufficiently small $%
h_{0}>0$, and for any $h\in \left( 0,h_{0}\right) $,%
\begin{equation*}
\frac{\mu _{1}(h,M_{1})}{\xi _{P}(h,M_{2})}<1.
\end{equation*}

\item \label{itm:ODE-rate-verif-3}If $\overrightarrow{\xi _{P}\left(
M\right) }>0$ then for sufficiently small $h_{0}>0$, and for any $h\in
\left( 0,h_{0}\right) $,%
\begin{equation*}
\xi _{P}\left( h,M\right) >1.
\end{equation*}%
Also%
\begin{equation*}
\xi _{P}\left( h,M\right) =1+h\overrightarrow{\xi _{P}\left( M\right) }%
+O(h^{2}).
\end{equation*}

\item \label{itm:ODE-rate-verif-4}If $\overrightarrow{\mu _{1}(M)}<0$ then
for sufficiently small $h_{0}>0$, and for any $h\in \left( 0,h_{0}\right) $,%
\begin{equation*}
\mu _{1}(h,M)<1.
\end{equation*}

\item \label{itm:ODE-rate-verif-5}If $h_{0}>0$ is sufficiently small, then
for any $h\in \left( 0,h_{0}\right) $,%
\begin{equation*}
\xi _{P}\left( h,M\right) >0\qquad \text{and\qquad }\xi \left( h,M\right) >0.
\end{equation*}
\end{enumerate}
\end{theorem}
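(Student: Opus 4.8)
The plan is to derive the whole statement from a single first-order Taylor expansion of the variational matrix of the flow in the time step $h$, and then feed this into Lemmas \ref{lem:lognorm-conv} and \ref{lem:ml-conv}. First I would record that, uniformly for $z\in D$ and $h\in(0,h_{0}]$,
\[
\frac{\partial \Phi}{\partial q}(h,z)=I+h\,Df(z)+E(h,z),\qquad \Vert E(h,z)\Vert\le Ch^{2}.
\]
This follows from the variational equation $\frac{\partial}{\partial h}\frac{\partial \Phi}{\partial q}(h,z)=Df(\Phi(h,z))\,\frac{\partial \Phi}{\partial q}(h,z)$ together with $\frac{\partial \Phi}{\partial q}(0,z)=I$: the second $h$-derivative of $\frac{\partial \Phi}{\partial q}$ is a continuous expression in $Df,\ D^{2}f$ and $\frac{\partial\Phi}{\partial q}$, hence bounded on the compact set $D$ (here $f\in C^{k+1}$, $k\ge1$, is used), which gives the uniform $O(h^{2})$ remainder. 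Restricting to the relevant $\mathrm{x},\mathrm{y}$ blocks, the diagonal blocks are $\frac{\partial\Phi_{\mathrm{x}}}{\partial\mathrm{x}}(h,z)=I+h\frac{\partial f_{\mathrm{x}}}{\partial\mathrm{x}}(z)+O(h^{2})$ and $\frac{\partial\Phi_{\mathrm{y}}}{\partial\mathrm{y}}(h,z)=I+h\frac{\partial f_{\mathrm{y}}}{\partial\mathrm{y}}(z)+O(h^{2})$, while the off-diagonal blocks vanish at $h=0$, e.g. $\frac{\partial\Phi_{\mathrm{x}}}{\partial\mathrm{y}}(h,z)=h\frac{\partial f_{\mathrm{x}}}{\partial\mathrm{y}}(z)+O(h^{2})$, and similarly for $\frac{\partial\Phi_{\mathrm{y}}}{\partial\mathrm{x}}$.

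For the first assertion (the four expansions) I would plug this in and use that $A\mapsto\Vert A\Vert$ and $A\mapsto m(A)$ are $1$-Lipschitz for the operator norm, so the $O(h^{2})$ term inside a diagonal block can be pulled out: by Lemma \ref{lem:ml-conv}, $m\bigl(I+h\frac{\partial f_{\mathrm{x}}}{\partial\mathrm{x}}(z)+O(h^{2})\bigr)=1+h\,m_{l}\bigl(\frac{\partial f_{\mathrm{x}}}{\partial\mathrm{x}}(z)\bigr)+O(h^{2})$, and by Lemma \ref{lem:lognorm-conv}, $\Vert I+h\frac{\partial f_{\mathrm{y}}}{\partial\mathrm{y}}(z)+O(h^{2})\Vert=1+h\,l\bigl(\frac{\partial f_{\mathrm{y}}}{\partial\mathrm{y}}(z)\bigr)+O(h^{2})$, while for an off-diagonal block $\Vert h\frac{\partial f_{\mathrm{x}}}{\partial\mathrm{y}}(z)+O(h^{2})\Vert=h\Vert\frac{\partial f_{\mathrm{x}}}{\partial\mathrm{y}}(z)\Vert+O(h^{2})$. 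All these remainders are uniform over $D$, hence over each $P(z)\subset D$, so the $\inf_{z}$ and $\sup_{z}$ in the definitions of $\xi(h,M)$, $\xi_{P}(h,M)$, $\mu_{1}(h,M)$, $\mu_{2}(h,M)$ may be moved onto the coefficients linear in $h$ with only an $O(h^{2})$ cost; this produces precisely (\ref{eq:ode-verif-xi})--(\ref{eq:ode-verif-mu2}), with first-order coefficients $\overrightarrow{\xi(M)}$, $\overrightarrow{\xi_{P}(M)}$, $\overrightarrow{\mu_{1}(M)}$, $\overrightarrow{\mu_{2}(M)}$.

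Parts \ref{itm:ODE-rate-verif-1}--\ref{itm:ODE-rate-verif-5} then follow by elementary perturbation off these expansions. Since each of the four quantities tends to $1$ as $h\to0^{+}$, for small $h$ they are positive, which is part \ref{itm:ODE-rate-verif-5} and also legitimizes the divisions and powers below. For part \ref{itm:ODE-rate-verif-1}, $(1+h\overrightarrow{\xi(M_{2})}+O(h^{2}))^{j+1}=1+h(j+1)\overrightarrow{\xi(M_{2})}+O(h^{2})$, so $\mu_{2}(h,M_{1})/\xi(h,M_{2})^{j+1}=1+h\bigl(\overrightarrow{\mu_{2}(M_{1})}-(j+1)\overrightarrow{\xi(M_{2})}\bigr)+O(h^{2})$, and the bracket is negative by (\ref{eq:ode-rate-cond-1-prf}), hence the ratio is $<1$ for $h$ small; part \ref{itm:ODE-rate-verif-2} is the identical computation with $(\mu_{1},\xi_{P})$ in place of $(\mu_{2},\xi)$ and $j=0$. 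Parts \ref{itm:ODE-rate-verif-3} and \ref{itm:ODE-rate-verif-4} read directly off $\xi_{P}(h,M)=1+h\overrightarrow{\xi_{P}(M)}+O(h^{2})$ and $\mu_{1}(h,M)=1+h\overrightarrow{\mu_{1}(M)}+O(h^{2})$, respectively, once the sign hypotheses are imposed and $h$ is small.

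I expect the only genuine work to be bookkeeping: verifying that the $O(h^{2})$ remainder in the block expansion of $\frac{\partial\Phi}{\partial q}(h,\cdot)$ is truly uniform on $D$ and on all the sets $P(z)$, and that it survives being passed through the (non-differentiable but $1$-Lipschitz) maps $\Vert\cdot\Vert$ and $m(\cdot)$ and through the $\inf$/$\sup$ over $D$ and over $P(z)$. Everything downstream of the first assertion is a one-line estimate.
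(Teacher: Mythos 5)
Your proposal matches the paper's proof essentially step for step: expand the variational matrix as $I+h\,Df(z)+O(h^2)$ uniformly on $D$, feed the diagonal blocks into Lemmas \ref{lem:lognorm-conv} and \ref{lem:ml-conv} and take norms of the off-diagonal blocks to obtain the four first-order expansions, then derive parts \ref{itm:ODE-rate-verif-1}--\ref{itm:ODE-rate-verif-5} by comparing leading coefficients for small $h$. The extra bookkeeping you flag (uniformity of the $O(h^2)$ remainder, passing it through the $1$-Lipschitz maps $\Vert\cdot\Vert$, $m(\cdot)$ and through $\inf/\sup$) is handled more tersely in the paper but your justification is correct.
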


\begin{proof}
We have 
\begin{eqnarray}
\Phi (h,z) &=&z+hf(z)+O(h^{2})  \notag \\
\frac{\partial \Phi }{\partial z}(h,z) &=&I+hDf(z)+O(h^{2}),  \notag \\
D_{z}^{2}\Phi (h,z) &=&hD^{2}f(z)+O(h^{2}).  \label{eq:d2varphi}
\end{eqnarray}%
where the $O(h^{2})$ are uniform in $z$ for $z\in D$.

Using (\ref{eq:def-ml}) and Lemma \ref{lem:ml-conv} we obtain 
\begin{eqnarray*}
m\left( \frac{\partial \Phi _{\mathrm{x}}}{\partial \mathrm{x}}(h,z)\right)
&=&m\left( I+h\frac{\partial f_{\mathrm{x}}}{\partial \mathrm{x}}%
+O(h^{2})\right) \\
&=&1+hm_{l}\left( \frac{\partial f_{\mathrm{x}}}{\partial \mathrm{x}}%
(z)\right) +O(h^{2}).
\end{eqnarray*}%
From (\ref{eq:def-log-norm}) and Lemma \ref{lem:lognorm-conv} we obtain 
\begin{eqnarray*}
\left\Vert \frac{\partial \Phi _{\mathrm{y}}}{\partial \mathrm{y}}%
(h,z)\right\Vert &=&\left\Vert I+h\frac{\partial f_{\mathrm{y}}}{\partial 
\mathrm{y}}(z)+O(h^{2})\right\Vert \\
&=&1+hl\left( \frac{\partial f_{\mathrm{y}}}{\partial \mathrm{y}}(z)\right)
+O(h^{2}).
\end{eqnarray*}%
And finally 
\begin{eqnarray*}
\left\Vert \frac{\partial \Phi _{\mathrm{x}}}{\partial \mathrm{y}}%
(h,z)\right\Vert &=&\left\Vert h\frac{\partial f_{\mathrm{x}}}{\partial 
\mathrm{y}}(z)+O(h^{2})\right\Vert =h\left\Vert \frac{\partial f_{\mathrm{x}}%
}{\partial \mathrm{y}}(z)\right\Vert +O(h^{2}), \\
\left\Vert \frac{\partial \Phi _{\mathrm{y}}}{\partial \mathrm{x}}%
(h,z)\right\Vert &=&\left\Vert h\frac{\partial f_{\mathrm{y}}}{\partial 
\mathrm{x}}(z)+O(h^{2})\right\Vert =h\left\Vert \frac{\partial f_{\mathrm{y}}%
}{\partial \mathrm{x}}(z)\right\Vert +O(h^{2}).
\end{eqnarray*}%
By combining the above formulas we obtain (\ref{eq:ode-verif-xi}--\ref%
{eq:ode-verif-mu2}).

We now prove the claim \textit{\ref{itm:ODE-rate-verif-1}.} of our theorem.
Since 
\begin{equation*}
\xi (h,M_{2})^{j+1}=\left( 1+h\overrightarrow{\xi (M_{2})}+O(h^{2})\right)
^{j+1}=1+h\left( j+1\right) \overrightarrow{\xi \left( M_{2}\right) }%
+O(h^{2}),
\end{equation*}%
from (\ref{eq:ode-rate-cond-1-prf}),%
\begin{multline*}
\mu _{2}(h,M_{1})=1+h\overrightarrow{\mu _{2}(M_{1})}+O(h^{2})< \\
<1+h\left( j+1\right) \overrightarrow{\xi \left( M_{2}\right) }+O(h^{2})=\xi
(h,M_{2})^{j+1}+O(h^{2}).
\end{multline*}%
This establishes the claim.

Claim \textit{\ref{itm:ODE-rate-verif-2}.} follows from mirror arguments
(taking $j=0$).

The claims \textit{\ref{itm:ODE-rate-verif-3}.} and \textit{\ref%
{itm:ODE-rate-verif-4}.} follow from (\ref{eq:ode-verif-xiP}) and (\ref%
{eq:ode-verif-mu1}), respectively.

Claim \textit{\ref{itm:ODE-rate-verif-5}.} follows from (\ref%
{eq:ode-verif-xi}) and (\ref{eq:ode-verif-xiP}).
\end{proof}

\subsection{Verification of covering conditions\label{sec:covering-ode}}

Here we show that from conditions in the definition of an isolating block
follow covering conditions for a time shift map along the trajectory of an
ODE.

\begin{theorem}
\label{th:cover-ode}Assume that $D=\Lambda \times \overline{B}_{u}\left(
R\right) \times \overline{B}_{s}\left( R\right) $ is an isolating block for (%
\ref{eq:ode-Wcu}) and let 
\begin{equation*}
F_{t}(q)=\Phi \left( t,q\right) .
\end{equation*}%
If $t$ is sufficiently small, then $F_{t}$ satisfies covering conditions
(see Definition \ref{def:covering}).
\end{theorem}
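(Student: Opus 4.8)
The plan is to construct the homotopy $h$ and the linear map $A$ required by Definition~\ref{def:covering} directly from the flow, using the fact that for small $t$ the time shift $F_t=\Phi(t,\cdot)$ is a small perturbation of the identity, and that the isolating block conditions control the behaviour of the vector field on the ``exit'' and ``entry'' faces of $D$. First I would fix $z\in D$ and set $\lambda^\ast=\pi_\lambda F_t(z)$, as suggested in the remark following Definition~\ref{def:covering}. The natural candidate for the homotopy is the linear interpolation between $F_t$ and its ``linearized-and-flattened'' model: writing $F_t(\lambda,x,y)=(\pi_\lambda F_t,\pi_x F_t,\pi_y F_t)$, I would take $h_\alpha$ to deform, as $\alpha$ runs from $0$ to $1$, the $\lambda$-component to the constant $\lambda^\ast$, the $y$-component to $0$, and the $x$-component to a linear expanding map $A$; concretely one can use $h_\alpha(\lambda,x,y)=\big((1-\alpha)\pi_\lambda F_t+\alpha\lambda^\ast,\ (1-\alpha)\pi_x F_t+\alpha A x,\ (1-\alpha)\pi_y F_t\big)$ restricted to $U=J_u(z,1/L)\cap D$. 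For $A$ one can take $A=\pi_x\frac{\partial\Phi}{\partial x}(t,z_0)$ at a suitable base point (or simply $(1+ct)I$ for an appropriate $c>0$); property~\ref{pt:covering-4} then follows because, by the first condition of the isolating block (outer normal condition $(\pi_x f(q)\,|\,\pi_x q)>0$) together with $\Phi(t,q)=q+tf(q)+O(t^2)$, the time-$t$ map strictly increases $\|\pi_x\cdot\|$ on $\Lambda\times\partial\overline B_u(R)\times\overline B_s(R)$ for small $t$, so $A$ pushes $\partial B_u(R)$ outside $\overline B_u(R)$.

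The two nontrivial requirements are the homotopy conditions \eqref{eq:homotopy-exit} and \eqref{eq:homotopy-enter}. For \eqref{eq:homotopy-enter}, I need $h_\alpha(U)\cap D^+_{\lambda^\ast}=\emptyset$ for all $\alpha$, i.e. no point of the homotopy image lands on the ``entry face'' $\{\,\|\pi_y\cdot\|=R\,\}$ (inside the appropriate $\lambda$-ball). Here the second isolating-block condition $(\pi_y f(q)\,|\,\pi_y q)<0$ on $\Lambda\times\overline B_u(R)\times\partial\overline B_s(R)$ is the key: it says the flow strictly enters through $\{\|\pi_y\cdot\|=R\}$, so for small $t$ one has $\|\pi_y F_t(q)\|<R$ whenever $\|\pi_y q\|=R$, and by continuity/compactness $\|\pi_y F_t(q)\|<R$ for all $q\in D$ with $\|\pi_y q\|$ close to $R$; since $h_\alpha$'s $y$-component is $(1-\alpha)\pi_y F_t$, which has norm $\le\|\pi_y F_t\|<R$, the entry face is never hit. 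For \eqref{eq:homotopy-exit}, I need: if $q\in U$ lies on the exit face $D^-_{\pi_\theta z}$ (so $\|\pi_x q\|=R$), then $h_\alpha(q)\notin D_{\lambda^\ast}$ for every $\alpha$, i.e. $h_\alpha(q)$ leaves the relevant $x$- or $\lambda$-ball. This again uses the outer-normal condition on the unstable face: $\|\pi_x F_t(q)\|>R$ for $\|\pi_x q\|=R$ and small $t$, and the cone restriction $q\in J_u(z,1/L)$ guarantees the $\lambda$-displacement is controlled by $\|\pi_x q-\pi_x z\|$, keeping $\pi_\lambda$ inside the chart so that ``outside $D_{\lambda^\ast}$'' is detected in the $x$-coordinate. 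Since $h_\alpha$'s $x$-component interpolates between $\pi_x F_t(q)$ (norm $>R$) and $Ax$ (norm $\ge(1+ct)R>R$ on the face), its norm stays $>R$ throughout, giving \eqref{eq:homotopy-exit}.

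The main obstacle I anticipate is bookkeeping the $\lambda$-direction: the charts only cover a ball of radius $R_\Lambda$, so I must check that along the whole homotopy the $\lambda$-component of $h_\alpha(q)$ stays within $B_c(\lambda^\ast,R_\Lambda)$ and that the sets $D^\pm_{\pi_\theta z}$, $D^+_{\lambda^\ast}$ are compared in a single chart — this is where the restriction to $U=J_u(z,1/L)\cap D$, the smallness of $t$, and the choice $L<1$, $R<\tfrac12 R_\Lambda$ are used to bound the $\lambda$-excursion by a quantity $\le R\cdot(1/L)+O(t)<R_\Lambda/2$. Once the chart issue is handled, the exit/entry conditions are genuinely just the two signed inequalities of Definition~\ref{def:isolating-segment} transported through $\Phi(t,q)=q+tf(q)+O(t^2)$, using compactness of $D$ to make the $O(t^2)$ uniform and choosing $t$ small accordingly. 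I would close by noting that $h_0=F_t|_U$ and $h_1(\lambda,x,y)=(\lambda^\ast,Ax,0)$ by construction, so points \ref{pt:covering-1}, \ref{pt:covering-3} hold trivially, and the homotopy is continuous (indeed smooth) in $(\alpha,q)$.
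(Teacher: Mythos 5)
Your route differs from the paper's. You linearly interpolate the map $F_t$ with a linear model, whereas the paper first homotopes the \emph{vector field} from $f$ to the model field $C(\lambda,x,y)=(0,x,-y)$ via $H_\alpha=(1-2\alpha)f+2\alpha C$, observes that the isolating-block inequalities $\left( \pi_x H_\alpha(q)\,|\,\pi_x q\right)>0$ and $\left( \pi_y H_\alpha(q)\,|\,\pi_y q\right)<0$ are linear in the vector field and hence hold for every $\alpha\in[0,1/2]$ by convexity, and only then takes time-$t$ flows of the intermediate fields (the second half of the homotopy, $\alpha\in[1/2,1]$, is a linear interpolation in which the $x$-component is held fixed, so the exit inequality is trivially preserved). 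The advantage of the paper's route is that it never has to re-verify the exit/entry inequalities along the homotopy: they propagate for free.

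There is a genuine gap in your exit-condition argument. You conclude that, since $\pi_x h_\alpha(q)$ interpolates between $\pi_x F_t(q)$ (norm $>R$) and $A\,\pi_x q$ (norm $>R$), ``its norm stays $>R$ throughout.'' That inference is false in general: a convex combination of two vectors each of norm $>R$ can have arbitrarily small norm. In your situation the conclusion is nevertheless true, but for a structural reason you have not stated: both endpoints are of the form $\pi_x q + t\,v + O(t^2)$ with $\left( \pi_x q\,|\,v\right)>0$ (the isolating-block inequality for the $F_t$-endpoint, and positivity of $c$ for the $A$-endpoint), so
\begin{equation*}
\bigl\Vert (1-\alpha)\pi_x F_t(q)+\alpha A\,\pi_x q\bigr\Vert^2
= R^2 + 2t\bigl(\pi_x q\,\big|\,(1-\alpha)\pi_x f(q)+\alpha c\,\pi_x q\bigr) + O(t^2) > R^2
\end{equation*}
uniformly over $q$ on the exit face and $\alpha\in[0,1]$, for $t$ small. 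Without inserting this first-order estimate (parallel to the one you do supply for the entry condition), the key step of your proposal is unjustified; with it, the proposal closes. Your entry-condition argument, the choice $\lambda^*=\pi_\lambda F_t(z)$, and the chart bookkeeping (modulo an inconsequential factor of $2$: the excursion bound is $2R/L+O(t)<R_\Lambda$, using $L>2R/R_\Lambda$) are all sound.
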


\begin{proof}
We need to construct the homotopy from $h$ from Definition \ref{def:covering}%
.

Let $C:\left( \lambda ,x,y\right) \rightarrow \left( 0,x,-y\right) $ and for 
$\alpha \in \left[ 0,\frac{1}{2}\right] $ let 
\begin{equation*}
H_{\alpha }=\left( 1-2\alpha \right) f+2\alpha C.
\end{equation*}%
For any $q\in \Lambda \times \partial \overline{B}_{u}\left( R\right) \times 
\overline{B}_{s}\left( R\right) ,$ 
\begin{equation}
\left( \pi _{x}H_{\alpha }\left( q\right) |\pi _{x}q\right) =\left(
1-2\alpha \right) \left( \pi _{x}f\left( q\right) |\pi _{x}q\right) +2\alpha
\left( \pi _{x}q|\pi _{x}q\right) >0,  \label{eq:homotopy-exit-ode}
\end{equation}%
and for any $q\in \Lambda \times \overline{B}_{u}\left( R\right) \times
\partial \overline{B}_{s}\left( R\right) $%
\begin{equation}
\left( \pi _{y}H_{\alpha }\left( q\right) |\pi _{y}q\right) =\left(
1-2\alpha \right) \left( \pi _{y}f\left( q\right) |\pi _{y}q\right) -2\alpha
\left( \pi _{y}q|\pi _{y}q\right) <0.  \label{eq:homotopy-enter-ode}
\end{equation}

Let $\phi _{\alpha }(t,q)$ be the flow induced by $q^{\prime }=H_{\alpha
}(q).$ Note that 
\begin{equation*}
\phi _{\alpha =1/2}(t,\left( \lambda ,x,y\right) )=\left( \lambda
,e^{t}x,e^{-t}y\right) .
\end{equation*}%
We shall fix a time $t$ (where $t$ will be sufficiently small) and define%
\begin{multline*}
h_{\alpha }\left( \lambda ,x,y\right) = \\
=\left\{ 
\begin{array}{lll}
\phi _{\alpha }(t,q) & \qquad & \text{for }\alpha \in \lbrack 0,\frac{1}{2})
\\ 
\left( \left( 2-2\alpha \right) \lambda +\left( 2\alpha -1\right) \lambda
^{\ast },e^{t}x,\left( 2-2\alpha \right) e^{-t}y\right) &  & \text{for }%
\alpha \in \lbrack \frac{1}{2},1]%
\end{array}%
\right. .
\end{multline*}%
Let $z\in D$ be a fixed point, let $U=J_{u}(z,1/L)\cap D$ be the set from
Definition \ref{def:covering} and let $z^{\ast }=\pi _{\lambda }z$. Note
that for small $t$ and $\alpha \in \lbrack 0,\frac{1}{2}),$ the $h_{\alpha
}\left( q\right) $ is close to identity. This means that for sufficiently
small $t$, for any $\alpha \in \lbrack 0,1]$%
\begin{equation*}
\pi _{\lambda }U\subset B_{c}\left( \lambda ^{\ast },R_{\Lambda }\right) .
\end{equation*}%
This means that our homotopy is well defined on $U$, i.e. 
\begin{equation*}
h:\left[ 0,1\right] \times U\rightarrow B_{c}\left( \lambda ^{\ast
},R_{\Lambda }\right) \times \mathbb{R}^{u}\times \mathbb{R}^{s}.
\end{equation*}

We now verify conditions \ref{pt:covering-1}.--\ref{pt:covering-4}. of
Definition \ref{def:covering}. The point \ref{pt:covering-1}. follows from
our construction. The conditions (\ref{eq:homotopy-exit}) and (\ref%
{eq:homotopy-enter}) follow from (\ref{eq:homotopy-exit-ode}) and (\ref%
{eq:homotopy-enter-ode}), respectively, provided that $t$ is sufficiently
small. Conditions \ref{pt:covering-3}. and \ref{pt:covering-4}. follow from
our definition of $h_{\alpha }$.
\end{proof}

\subsection{Verification of backward cone conditions}

In this section we show that if our vector field satisfies rate conditions,
then the time shift map along the solution of the ODE will satisfy backward
cone conditions.

For our proof we will need the following lemma:

\begin{lemma}
\cite[Corollary 35]{Geom}\label{lem:Jsc-propagation} If a map $F$ satisfies
rate conditions (for maps; as in Definition \ref{def:rate-conditions}) of
order $k\geq 0$ then for any $z\in D$%
\begin{equation*}
F(\overline{J_{s}^{c}\left( z,1/L\right) }\cap (\overline{B}_{c}\left( \pi
_{\lambda }z,R_{\Lambda }\right) \times \overline{B}_{u}\left( R\right)
\times \overline{B}_{s}\left( R\right) )\subset J_{s}^{c}\left(
F(z),1/L\right) \cup \left\{ F(z)\right\} .
\end{equation*}
\end{lemma}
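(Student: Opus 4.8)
The plan is to reduce the claimed inclusion to a single cone-propagation estimate for $F$, dispose of the degenerate ``$\cup\{F(z)\}$'' case directly, and then read the conclusion off the rate conditions. Fix $z\in D$ and let $z_{1}$ be any point of $\overline{J_{s}^{c}(z,1/L)}$ whose $\lambda$-component lies in $\overline{B}_{c}(\pi_{\lambda}z,R_{\Lambda})$ and whose $x,y$ components lie in $\overline{B}_{u}(R)\times\overline{B}_{s}(R)$. Write $w=z_{1}-z$ and split it into $w_{cu}=\pi_{(\lambda,x)}w$ and $w_{s}=\pi_{y}w$. By the definition of $J_{s}$, the hypothesis $z_{1}\in\overline{J_{s}^{c}(z,1/L)}$ is precisely the inequality $\|w_{s}\|\le L\|w_{cu}\|$. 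If $w_{cu}=0$, this forces $w_{s}=0$, hence $z_{1}=z$ and $F(z_{1})=F(z)$, which is the point absorbed by ``$\cup\{F(z)\}$''. From now on assume $w_{cu}\neq 0$.

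The restriction on the $\lambda$-component of $z_{1}$ keeps the whole segment $[z,z_{1}]$ inside one good chart, so the averaged mean-value identity $F(z_{1})-F(z)=\overline{DF}(z,z_{1})\,w$ from the Preliminaries applies, and the averaged partial derivatives it produces obey the bounds defining $\xi_{cu,1,P}$ and $\mu_{s,1}$ (this is exactly the role of the constants carrying the subscript $P$). Set $\Delta_{cu}=\pi_{(\lambda,x)}(F(z_{1})-F(z))$ and $\Delta_{s}=\pi_{y}(F(z_{1})-F(z))$. Expanding $\Delta_{cu}$ into its $(\lambda,x)$- and $y$-blocks, estimating the diagonal block from below with $m(\cdot)$ and the off-diagonal block from above via $\|w_{s}\|\le L\|w_{cu}\|$, yields $\|\Delta_{cu}\|\ge\xi_{cu,1,P}\|w_{cu}\|$; since $\xi_{cu,1,P}>0$ by the rate conditions and $w_{cu}\neq 0$, in particular $\Delta_{cu}\neq 0$, so $F(z_{1})\neq F(z)$. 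Dually, writing $\Delta_{s}=\int_{0}^{1}DF_{y}(z+tw)\,w\,dt$ and using the pointwise bound $\|DF_{y}(p)w\|\le L\mu_{s,1}\|w_{cu}\|$ (immediate from $\|w_{s}\|\le L\|w_{cu}\|$ and the definition of $\mu_{s,1}$) gives $\|\Delta_{s}\|\le L\mu_{s,1}\|w_{cu}\|$.

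Invoking the rate condition $\mu_{s,1}<\xi_{cu,1,P}$ (the second inequality of (\ref{eq:rate-cond-2})), the two estimates combine to
\[
\|\Delta_{s}\|\ \le\ L\mu_{s,1}\|w_{cu}\|\ <\ L\,\xi_{cu,1,P}\|w_{cu}\|\ \le\ L\|\Delta_{cu}\|,
\]
i.e.\ $\|\Delta_{cu}\|>\tfrac1L\|\Delta_{s}\|$, which is exactly $F(z_{1})\in J_{s}^{c}(F(z),1/L)$. Together with the case $w_{cu}=0$, this is the asserted inclusion. I expect the one genuinely delicate point to be the lower bound $\|\Delta_{cu}\|\ge\xi_{cu,1,P}\|w_{cu}\|$: because $m(\cdot)$ does not commute with integration (one can have $m(\int A\,dt)<\int m(A)\,dt$), one must argue that the averaged diagonal block $\overline{\partial F_{(\lambda,x)}/\partial(\lambda,x)}(z,z_{1})$ lies in the convex hull of the values of $\partial F_{(\lambda,x)}/\partial(\lambda,x)$ over $P(z)$ and that $m$ of an interval enclosure is a valid lower bound for every member of that hull — this is precisely why those constants are set up over chart-sized sets, and is the bookkeeping carried out in \cite{Geom}. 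Everything else is routine; equivalently, the whole lemma can be phrased as a cone condition stating that $F$ strictly increases the quadratic form $Q(\lambda,x,y)=L^{2}\|(\lambda,x)\|^{2}-\|y\|^{2}$ along the center-unstable cone.
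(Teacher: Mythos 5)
Since the paper cites this lemma from \cite{Geom} without reproducing a proof, there is no in-paper argument to compare against; I am assessing your proof on its own merits. Your argument is a correct, self-contained proof. The structure --- decompose $w=z_1-z$ into the center-unstable part $w_{cu}$ and the stable part $w_s$, dispose of the degenerate case $w_{cu}=0$, apply the averaged mean-value identity from the Preliminaries, and then read the cone inclusion off the two estimates $\|\Delta_{cu}\|\ge\xi_{cu,1,P}\|w_{cu}\|$ and $\|\Delta_s\|\le L\mu_{s,1}\|w_{cu}\|$ combined with the rate condition $\mu_{s,1}<\xi_{cu,1,P}$ --- is exactly the cone-invariance argument these rate constants are built for, and the conclusion $\|\Delta_{cu}\|>\frac{1}{L}\|\Delta_s\|$ is precisely $F(z_1)\in J_s^c(F(z),1/L)$.

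The point you flagged is indeed the genuine crux. The naive inequality $m(\overline{A})\ge\inf_{p\in[z,z_1]}m(A(p))$ is false in general, since $m$ is not quasi-concave: averaging the rotations $R(\theta)$ and $R(-\theta)$ gives $m=|\cos\theta|<1$, while each factor has $m=1$. The argument only goes through because the quantity $m\left(\frac{\partial F_{(\lambda,x)}}{\partial(\lambda,x)}(P(\cdot))\right)$ in \cite{Geom} is $m$ evaluated on an interval enclosure of the Jacobians over $P(\cdot)$; the averaged matrix lies in the convex hull of those Jacobians, which is contained in the interval hull, so the lower bound is legitimate.

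One further bookkeeping step you should make explicit: the lemma's domain permits $\|\pi_\lambda z_1-\pi_\lambda z\|\le R_\Lambda$, whereas $P(q)$ has $\lambda$-radius only $R_\Lambda/2$, so the segment $[z,z_1]$ need not lie in $P(z)$. It does lie entirely in $P(z_m)$ for the midpoint $z_m$ of the segment, and $z_m\in D$ by convexity in the chart; this is precisely why $\xi_{cu,1,P}$ is an infimum over all of $D$ of the $P$-localized quantity rather than a quantity tied to $P(z)$ for the specific base point. With that spelled out, the argument is complete.
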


We can now formulate our theorem.

\begin{theorem}
\label{th:ode-back-cc}If $f$ satisfies rate conditions of order $k\geq 0$,
then for sufficiently small $h>0$, for any $t\in \left( 0,h\right) $, the
map $F_{t}(z):=\Phi \left( t,z\right) $ satisfies backward cone conditions.
\end{theorem}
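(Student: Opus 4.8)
The plan is to verify the (contrapositive of the) backward cone condition for $F_t=\Phi(t,\cdot)$ by pushing complements of stable cones forward with the already available propagation result, Lemma \ref{lem:Jsc-propagation}, exploiting that for small $t$ the map $F_t$ is close to the identity. The first step is to check that, for $h$ small and every $t\in(0,h)$, $F_t$ satisfies the rate conditions for maps (Definition \ref{def:rate-conditions}) of order $k$. This is precisely what Theorem \ref{th:ODE-rate-verif} is for: each map-level quantity $\mu_{s,1},\mu_{cs,1},\mu_{s,2},\mu_{cs,2}$ and $\xi_{u,1},\xi_{u,1,P},\xi_{cu,1},\xi_{cu,1,P}$ of $F_t$ equals one of $\mu_1(h,M),\mu_2(h,M),\xi(h,M),\xi_P(h,M)$ for a suitable choice of the splitting $(\mathrm x,\mathrm y)$ and of $M\in\{L,1/L\}$, and items \ref{itm:ODE-rate-verif-1}--\ref{itm:ODE-rate-verif-5} then turn the strict vector-field inequalities collected in Definition \ref{def:rate-cond-ode} into the inequalities \eqref{eq:rate-cond-1}--\eqref{eq:rate-cond-3} and the positivity requirements for $F_t$, for all sufficiently small $h$. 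Consequently Lemma \ref{lem:Jsc-propagation} applies with $F=F_t$: for every $z\in D$, the map $F_t$ sends $\overline{J_s^c(z,1/L)}\cap(\overline B_c(\pi_\lambda z,R_\Lambda)\times\overline B_u(R)\times\overline B_s(R))$ into $J_s^c(F_t(z),1/L)\cup\{F_t(z)\}$.

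Now fix $z_1,z_2$ with $z_1,z_2,F_t(z_1),F_t(z_2)\in D$ and $F_t(z_1)\in J_s(F_t(z_2),1/L)$; the goal is $z_1\in J_s(z_2,1/L)$. Because the $y$-components of points of $D$ have norm at most $R$, the assumption $F_t(z_1)\in J_s(F_t(z_2),1/L)$ forces $\|\pi_\lambda F_t(z_1)-\pi_\lambda F_t(z_2)\|\le 2R/L$. Since $\Phi(t,\cdot)\to\mathrm{id}$ uniformly on $D$ as $t\to0^+$, there is a constant $C_f$ (a bound for $\|f\|$ near $D$) with $\|\Phi(t,z)-z\|\le C_f t$ on $D$, whence $\|\pi_\lambda z_1-\pi_\lambda z_2\|\le 2R/L+2C_f t<R_\Lambda$ once $h$ is small, using $L>2R/R_\Lambda$; thus $z_1$ lies in the chart box around $z_2$. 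Suppose, for contradiction, $z_1\notin J_s(z_2,1/L)$, i.e. $z_1\in J_s^c(z_2,1/L)$. Applying the inclusion from the previous paragraph with $z=z_2$ gives $F_t(z_1)\in J_s^c(F_t(z_2),1/L)\cup\{F_t(z_2)\}$. The trajectory map $\Phi(t,\cdot)$ is injective (uniqueness of solutions), and $z_1\ne z_2$ because $z_2\in J_s(z_2,1/L)$ while $z_1\notin J_s(z_2,1/L)$; hence $F_t(z_1)\ne F_t(z_2)$ and therefore $F_t(z_1)\in J_s^c(F_t(z_2),1/L)$, contradicting $F_t(z_1)\in J_s(F_t(z_2),1/L)$. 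So $z_1\in J_s(z_2,1/L)$, which is exactly the backward cone condition.

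I expect the only genuinely fiddly part to be the bookkeeping in the first step: matching each of the eight map constants of $F_t$ with the correct instance of $\xi(h,M),\xi_P(h,M),\mu_1(h,M),\mu_2(h,M)$, checking that Definition \ref{def:rate-cond-ode} indeed supplies exactly the strict inequalities needed by the corresponding items of Theorem \ref{th:ODE-rate-verif}, and keeping track of which choices use $M=L$ versus $M=1/L$. The remaining ingredient — that a near-identity $F_t$ cannot map two points of $D$ lying in different good charts to a pair whose images satisfy $F_t(z_1)\in J_s(F_t(z_2),1/L)$, so that the chart-box hypothesis of Lemma \ref{lem:Jsc-propagation} costs nothing — is a routine small-$t$ estimate, and everything else is a direct invocation of results already established.
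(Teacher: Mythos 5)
Your proof is correct and follows essentially the same route as the paper: invoke Theorem~\ref{th:ODE-rate-verif} to get the map-level rate conditions for $F_t$, apply Lemma~\ref{lem:Jsc-propagation}, use the near-identity estimate for small $t$ together with $L>2R/R_\Lambda$ to ensure $z_1$ lies in the chart box about $z_2$, and close by contradiction. Your explicit treatment of the case $F_t(z_1)=F_t(z_2)$ via injectivity is a small but genuine clarification of a point the paper leaves implicit, and your direct bound on $\|\pi_\lambda z_1-\pi_\lambda z_2\|$ replaces the paper's equivalent step of pushing the cone inclusion backward with $F_{-t}$.
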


\begin{proof}
The proof is based on Lemma \ref{lem:Jsc-propagation}, which establishes
forward invariance of complements of $J_s$ for maps satisfying rate
conditions. In the proof, these maps will be time shifts along the
trajectory of an ODE. We will also make use of the fact that such maps are
close to identity for small times.

Recall that we have chosen $L\in \left( \frac{2R}{R_{\Lambda }},1\right) .$
This implies that for $z_{2}\in D$ and $z_{1}\in J_{s}\left(
z_{2},1/L\right) \cap D$%
\begin{equation*}
\left\Vert \pi _{\lambda }\left( z_{1}-z_{2}\right) \right\Vert \leq
\left\Vert \pi _{\lambda ,x}\left( z_{1}-z_{2}\right) \right\Vert \leq \frac{%
1}{L}\left\Vert \pi _{y}\left( z_{1}-z_{2}\right) \right\Vert \leq \frac{1}{L%
}2R<R_{\Lambda }.
\end{equation*}%
In other words, for any $z\in D$%
\begin{equation*}
\pi _{\lambda }\left( J_{s}\left( z,1/L\right) \cap D\right) \subset
B_{c}\left( \pi _{\lambda }z,R_{\Lambda }\right) .
\end{equation*}%
Since for small $t$ the $F_{t}$ is close to identity, we can choose $h$
small enough so that for any $t\in \left( 0,h\right) $%
\begin{equation}
\pi _{\lambda }F_{-t}(J_{s}\left( z,1/L\right) \cap D)\subset B_{c}\left(
\pi _{\lambda }F_{-t}\left( z\right) ,R_{\Lambda }\right) .
\label{eq:back-jet-projection}
\end{equation}

Suppose that backward cone conditions do not hold. Then for any $h>0$, there
exists a $t\in \left( 0,h\right) $ and a pair of points $%
z_{1},z_{2},F_{t}(z_{1}),F_{t}(z_{2})\in D$ satisfying 
\begin{equation}
F_{t}(z_{1})\in J_{s}\left( F_{t}(z_{2}),1/L\right) 
\label{eq:tmp-back-cc-proof-1}
\end{equation}%
such that 
\begin{equation*}
z_{1}\in J_{s}^{c}\left( z_{2},1/L\right) .
\end{equation*}%
By (\ref{eq:tmp-back-cc-proof-1}) and (\ref{eq:back-jet-projection}) 
\begin{equation*}
\pi _{\lambda }z_{1}=\pi _{\lambda }F_{-t}\left( F_{t}(z_{1})\right) \in 
\overline{B}_{c}\left( \pi _{\lambda }F_{-t}\left( F_{t}(z_{2})\right)
,R_{\Lambda }\right) =\overline{B}_{c}\left( \pi _{\lambda }z_{2},R_{\Lambda
}\right) ,
\end{equation*}%
which since $z_{1}\in J_{s}^{c}\left( z_{2},1/L\right) $ means that 
\begin{equation*}
z_{1}\in J_{s}^{c}\left( z_{2},1/L\right) \cap \overline{B}_{c}\left( \pi
_{\lambda }z_{2},R_{\Lambda }\right) \times \overline{B}_{u}\left( R\right)
\times \overline{B}_{s}\left( R\right) .
\end{equation*}%
Since $f$ satisfies rate conditions, by Theorem \ref{th:ODE-rate-verif}, for
sufficiently small $h$ and any $t\in \left( 0,h\right) $ the map $F_{t}$
will satisfy rate conditions (for maps; as in Definition \ref%
{def:rate-conditions}). This, by Lemma \ref{lem:Jsc-propagation} contradicts
(\ref{eq:tmp-back-cc-proof-1}). This concludes our proof.
\end{proof}

\subsection{Proof of the existence of the center unstable manifold\label%
{sec:wcu-ode-proof}}

In this section we will prove the existence of the center unstable manifold
and unstable fibers, which was formulated in Theorem \ref{th:main-ode}.
First we need a technical lemma:

\begin{lemma}
\cite[Corollary 34]{Geom}\label{lem:Ju-cone-inclusion} If a map $F$
satisfies rate conditions of order $k\geq 0$ (for maps, as in Definition \ref%
{def:rate-conditions}), then for any $z\in D$%
\begin{equation*}
F\left( J_{u}\left( z,1/L\right) \cap D\right) \subset \mathrm{int}%
J_{u}\left( F\left( z\right) ,1/L\right) \cup \left\{ F\left( z\right)
\right\}
\end{equation*}
\end{lemma}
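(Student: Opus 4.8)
\textbf{Proof plan for Lemma \ref{lem:Ju-cone-inclusion}.}

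The plan is to reduce the statement to an already-available result from \cite{Geom}; the lemma is cited as \cite[Corollary 34]{Geom}, so strictly speaking nothing needs to be reproven, but I would nonetheless sketch why it holds, since it is the geometric backbone of the fiber construction. The starting point is the definition of the unstable cone $J_u(z,1/L)$ together with the rate conditions of Definition \ref{def:rate-conditions}, in particular the strict inequalities $\xi_{u,1,P}>1$ and $\mu_{s,1}/\xi_{cu,1,P}<1$ governing expansion in the $x$ (unstable) direction versus the $(\lambda,y)$ (center-stable) directions. First I would take a point $p\in J_u(z,1/L)\cap D$, write $p=z+(\Delta\lambda,\Delta x,\Delta y)$ with $\|(\Delta\lambda,\Delta y)\|\le \tfrac1L\|\Delta x\|$, and estimate the increments of $F_x$ and $F_{(\lambda,y)}$ along the segment from $z$ to $p$ using the mean-value form $F(p)-F(z)=\overline{DF}(z,p)(p-z)$ from the Preliminaries. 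The key is that $\pi_x(F(p)-F(z))$ is bounded below by roughly $\xi_{u,1,P}\|\Delta x\|-\tfrac1L\|\cdots\|$ type terms, while $\pi_{(\lambda,y)}(F(p)-F(z))$ is bounded above by $\mu_{cs,1}$- and $\mu_{s,1}$-type terms; the rate conditions then force the ratio to be strictly smaller than $1/L$, which is exactly the statement $F(p)\in \mathrm{int}\,J_u(F(z),1/L)$.

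The one genuine subtlety — and the step I expect to be the main obstacle if one writes this out carefully — is the appearance of the `$P$-versions' $\xi_{u,1,P}$, $\xi_{cu,1,P}$ of the constants, which are computed over the enlarged sets $P(z)$ of points sharing a good chart with $z$ (see (\ref{eq:map-P-const})). One must check that the segment from $z$ to $p$, and its image, stay within one good chart so that the chart-wise estimates and the constants $m(\partial F_x/\partial x(P(z)))$ are legitimately applicable; this is where the choice $L\in(2R/R_\Lambda,1)$ and $R<\tfrac12 R_\Lambda$ enters, exactly as in the projection estimate $\pi_\lambda(J_s(z,1/L)\cap D)\subset B_c(\pi_\lambda z,R_\Lambda)$ used in the proof of Theorem \ref{th:ode-back-cc}. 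The analogous bound for $J_u$ gives $\|\pi_\lambda(p-z)\|\le\|\pi_{\lambda,y}(p-z)\|\le\tfrac1L 2R<R_\Lambda$, keeping everything in range.

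Since the result is quoted verbatim from \cite{Geom}, in the paper itself I would simply invoke the citation and, if desired, add one sentence recalling that it follows from the mean-value estimates on $DF$ combined with the rate conditions (\ref{eq:rate-cond-1})–(\ref{eq:rate-cond-3}), with the openness of the inclusion coming from the strictness of those inequalities. The lemma is then ready to be used, as announced, in the proof of Theorem \ref{th:main-ode} to establish forward invariance of unstable cones along trajectories of the time-shift map, and hence the foliation of $W^{cu}$ by the fibers $W_z^u$.
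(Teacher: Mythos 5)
The paper does not supply a proof of this lemma; it is quoted with a citation to \cite[Corollary 34]{Geom}, and your proposal correctly recognizes this and does the same. Your supplementary sketch — mean-value form $F(p)-F(z)=\overline{DF}(z,p)(p-z)$ combined with the rate conditions, the strictness giving the interior, and the chart-size check $\|\pi_\lambda(p-z)\|\le \tfrac1L\,2R<R_\Lambda$ justifying the $P$-version constants — is a faithful account of how the cited result is established.
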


We are now ready to prove Theorem \ref{th:main-ode}.

\begin{proof}[Proof of Theorem \protect\ref{th:main-ode}]
Let $F_{t}(q)=\Phi \left( t,q\right) $. We shall write $W^{cu}\left(
F_{t}\right) $ and $W_{z}^{u}\left( F_{t}\right) $ for the center unstable
manifold and for the unstable fiber induced by the map $F_{t}$,
respectively. (These are in the sense of Definitions \ref{def:Wcu-maps}, \ref%
{def:Wu-fiber}.) We shall also write $W^{cu}\left( \Phi \right) $ and $%
W_{z}^{u}\left( \Phi \right) $ for the manifolds induced by the flow (in the
sense of Definitions \ref{def:Wcu-ode}, \ref{def:ode-fiber}).

By Theorems \ref{th:ODE-rate-verif}, \ref{th:cover-ode} and \ref%
{th:ode-back-cc}, for sufficiently small $t$ the function $F_{t}(q)=\Phi
\left( t,q\right) $ satisfies assumptions of Theorem \ref{th:cu-maps}. We
can therefore fix a small $h$ and apply Theorem \ref{th:cu-maps} for the map 
$F_{h}$ and obtain the center unstable manifold $W^{cu}\left( F_{h}\right) $
and the unstable fiber $W_{z}^{u}\left( F_{h}\right)$. It will turn out that
if we choose $h$ sufficiently small, then we can show that $W^{cu}\left(
F_{h}\right) =W^{cu}\left( \Phi \right)$ and $W_{z}^{u}\left( F_{h}\right)
=W_{z}^{u}\left( \Phi \right) $.

We first show that if $h$ is chosen to be small, then $W^{cu}\left(
F_{h}\right) \subset W^{cu}\left( \Phi \right) $. Consider $D^{+}:=\Lambda
\times \overline{B}_{u}\left( R\right) \times \partial \overline{B}%
_{s}\left( R\right) $. Since $D$ is an isolating block, and $D^{+}$ is
compact, there exists a $\delta >0$ such that 
\begin{equation}
\Phi \left( -s,z\right) \notin D\qquad \text{for all }z\in D^{+}\text{ and }%
s\in (0,\delta ].  \label{eq:flow-back-exit}
\end{equation}%
Let us choose $h<\delta $. We shall show that with such choice of $h$, for
any $z\in W^{cu}\left( F_{h}\right) $ we will have $\Phi \left( -t,z\right)
\in D$, for all $t>0$. Since $z\in W^{cu}\left( F_{h}\right) ,$ we know that 
\begin{equation}
F_{h}^{-n}(z)=\Phi \left( -nh,z\right) \in D.  \label{eq:tmp-Phi-in-D}
\end{equation}%
Suppose now that for some $t>0$, $\Phi \left( -t,z\right) \notin D$. By (\ref%
{eq:tmp-Phi-in-D}), $t\in \left( -\left( n+1\right) h,-nh\right) $ for some $%
n\in \mathbb{N}$. Since $D$ is an isolating block, the only possibility to
leave $D$ going backwards in time is by passing through $D^{+}$. Hence, for
some $\tau ^{\ast }\in \left( nh,t\right) $ there exists a $z^{\ast }=\Phi
\left( -\tau ^{\ast },z\right) \in D^{+}$. We see that 
\begin{equation*}
\Phi \left( -(n+1)h+\tau ^{\ast },z^{\ast }\right) =\Phi \left( -(n+1)h+\tau
^{\ast },\Phi \left( -\tau ^{\ast },z\right) \right) =\Phi \left(
-(n+1)h,z\right) \in D,
\end{equation*}%
but this contradicts (\ref{eq:flow-back-exit}) by taking $s=(n+1)h-\tau
^{\ast }$. We have thus shown that for $z\in W^{cu}\left( F_{h}\right) $, $%
\Phi \left( t,z\right) \in D$ for all $t<0$, hence $W^{cu}\left(
F_{h}\right) \subset W^{cu}\left( \Phi \right) $. The inclusion in the
opposite direction is evident.

We now show that $W_{z}^{u}(F_{h})\subset W_{z}^{u}(\Phi )$. Let us consider
a point $p\in W_{z}^{u}\left( F_{h}\right) $. We will show that $p\in
W_{z}^{u}(\Phi )$. Since $W_{z}^{u}\left( F_{h}\right) \subset W^{cu}\left(
F_{h}\right) =W^{cu}\left( \Phi \right) ,$ 
\begin{equation*}
\Phi \left( t,p\right) \in D\qquad \text{for all }t<0.
\end{equation*}%
We also know that since $p\in W_{z}^{u}\left( F_{h}\right) $, 
\begin{equation}
F_{h}^{-n}\left( p\right) =J_{u}\left( F_{h}^{-n}\left( z\right) ,1/L\right)
\cap D.  \label{eq:tmp-cones-Ju-inclusion}
\end{equation}%
By Theorem \ref{th:ODE-rate-verif}, for any $\tau \in \left( 0,h\right) $,
the map $F_{\tau }$ satisfies rate conditions, so, by Lemma \ref%
{lem:Ju-cone-inclusion} and (\ref{eq:tmp-cones-Ju-inclusion}),%
\begin{equation*}
F_{\tau }\left( F_{h}^{-n}\left( p\right) \right) \in J_{u}\left( F_{\tau
}\left( F_{h}^{-n}\left( z\right) \right) ,1/L\right) \cap D.
\end{equation*}%
Since $F_{\tau }\left( F_{h}^{-n}\left( \cdot \right) \right) =\Phi \left(
-nh+\tau ,\cdot \right) $ and $n\in \mathbb{N}$, $\tau \in \left( 0,h\right) 
$ are arbitrary, we obtain%
\begin{equation*}
\Phi \left( t,p\right) \in J_{u}\left( \Phi \left( t,p\right) ,1/L\right)
\cap D\qquad \text{for all }t<0.
\end{equation*}%
We have thus shown that $p\in W_{z}^{u}\left( \Phi \right) ,$ hence $%
W_{z}^{u}\left( F_{h}\right) \subset W_{z}^{u}\left( \Phi \right) .$ The
inclusion in the opposite direction is evident.

What remains is to show (\ref{eq:Wuz-covergence-ode}). Let us denote by $\xi
_{u,1,P}(h)$ the constant $\xi _{u,1,P}$ defined for the map $F_{h}.$ (See
beginning of section \ref{sec:wcu-maps-setup} for the definition of $\xi
_{u,1,P}.$) By (\ref{eq:ode-verif-xiP}) we know that%
\begin{equation*}
\xi _{u,1,P}\left( h\right) =1+h\overrightarrow{\xi _{u,1,P}}+O(h^{2}).
\end{equation*}%
We have shown above that for sufficiently small $h$, $W_{z}^{u}\left(
F_{h}\right) =W_{z}^{u}\left( \Phi \right) $. Therefore, by (\ref%
{eq:Wuz-convergence-maps}) from Theorem \ref{th:cu-maps},%
\begin{eqnarray*}
\left\Vert \Phi \left( -t,p\right) -\Phi \left( -t,z\right) \right\Vert
&=&\left\Vert F_{t/n}^{-n}\left( p\right) -F_{t/n}^{-n}\left( z\right)
\right\Vert \\
&\leq &C\left( 1+\frac{t}{n}\overrightarrow{\xi _{u,1,P}}+O\left( \frac{t}{n}%
\right) ^{2}\right) ^{-n}.
\end{eqnarray*}%
Passing to the limit with $n\rightarrow \infty $,%
\begin{equation*}
\left\Vert \Phi \left( -t,p\right) -\Phi \left( -t,z\right) \right\Vert \leq
Ce^{-t\overrightarrow{\xi _{u,1,P}}},
\end{equation*}%
which concludes the proof of (\ref{eq:Wuz-covergence-ode}).
\end{proof}

\subsection{Bounds on second derivatives\label{subsec:ode-bnds-sec-der}}

In this section, for the sake of simplicity, we shall again use two
coordinates $\mathrm{x}$ and $\mathrm{y}$. We shall study the bounds on the
second derivative of a function $\mathrm{y}=w\left( \mathrm{x}\right) $
under appropriate rate conditions. In applications, we can have:

\begin{itemize}
\item $\mathrm{x}=x,$ $\mathrm{y}=\left( \lambda ,y\right) $ and $w(\mathrm{x%
})=w_{z}^{u}\left( \mathrm{x}\right) $;

\item $\mathrm{x}=\left( \lambda ,x\right) ,$ $\mathrm{y}=y$ and $w(\mathrm{x%
})=w^{cu}\left( \mathrm{x}\right) .$
\end{itemize}

Similarly, in the case of a family of odes, which depend on parameters, we
can have:

\begin{itemize}
\item $\mathrm{x}=x,$ $\mathrm{y}=\left( \varepsilon ,\lambda ,y\right) $
and $w(\mathrm{x})=w_{z}^{u}\left( \mathrm{x}\right) $;

\item $\mathrm{x}=\left( \varepsilon ,\lambda ,x\right) ,$ $\mathrm{y}=y$
and $w(\mathrm{x})=w^{cu}\left( \mathrm{x}\right) .$
\end{itemize}

We shall assume that $(\mathrm{x},\mathrm{y})\in \mathbb{R}^{u+s}$ and
consider vector field $f:D\rightarrow \mathbb{R}^{u+s}$ which is $C^{3}$,
where $D\subset \mathbb{R}^{u+s}$ is the domain of $f$. We consider a map $%
F=\left( F_{\mathrm{x}},F_{\mathrm{y}}\right) =\Phi \left( h,\cdot \right) $%
, a time shift by $h$ along the trajectory of the flow.

We assume that $F$ is such that if $v:\mathbb{R}^{u}\rightarrow \mathbb{R}%
^{s}$ is Lipschitz with constant $\mathcal{L}$, then the graph transform $%
\mathcal{G}\left( v\right) $ is well defined i.e.%
\begin{equation*}
\mathcal{G}\left( v\right) =F_{\mathrm{y}}\circ \left( \mathrm{id},v\right)
\circ \left( F_{\mathrm{x}}\circ \left( \mathrm{id},v\right) \right) ^{-1}.
\end{equation*}

Assume also that for $v_{0}\left( \mathrm{x}\right) =0$%
\begin{equation}
w=\lim_{n\rightarrow \infty }\mathcal{G}^{n}\left( v_{0}\right) ,
\label{eq:G-ode-limit}
\end{equation}%
for all $h\in (0,h_{0}]$. (Such is the setting in the construction of $%
w^{cu} $ and $w_{z}^{u}$ in \cite{Geom}. For $w^{cu}$, $\mathcal{L}=L$ and
for $w_{z}^{u}$, $\mathcal{L}=1/L.$ These properties follow from assumptions
of Theorem \ref{th:main-ode}.) The following result will allow us to obtain
estimates on the second derivative of $w$.

\begin{theorem}
\label{th:ode-unstable-cones} Let 
\begin{eqnarray*}
\overrightarrow{\xi } &=&\inf_{z\in D}m_{l}\left( \frac{\partial f_{\mathrm{x%
}}}{\partial \mathrm{x}}(z)\right) -\mathcal{L}\sup_{z\in D}\left\Vert \frac{%
\partial f_{\mathrm{x}}}{\partial \mathrm{y}}(z)\right\Vert , \\
\overrightarrow{\mu }_{1} &=&\sup_{z\in D}l\left( \frac{\partial f_{\mathrm{y%
}}}{\partial \mathrm{y}}(z)\right) +\frac{1}{\mathcal{L}}\sup_{z\in
D}\left\Vert \frac{\partial f_{\mathrm{y}}}{\partial \mathrm{x}}%
(z)\right\Vert , \\
\overrightarrow{\mu }_{2} &=&\sup_{z\in D}l\left( \frac{\partial f_{\mathrm{y%
}}}{\partial \mathrm{y}}(z)\right) +\mathcal{L}\sup_{z\in D}\left\Vert \frac{%
\partial f_{\mathrm{x}}}{\partial \mathrm{y}}(z)\right\Vert .
\end{eqnarray*}%
Assume that 
\begin{equation}
\overrightarrow{\mu }_{1}<\overrightarrow{\xi },\qquad \overrightarrow{\mu }%
_{2}<2\overrightarrow{\xi }.  \label{eq:ode-mu-rates}
\end{equation}%
Let 
\begin{eqnarray*}
\overrightarrow{C}_{x} &=&\frac{1}{2}\max_{p\in D,\Vert v\Vert =1}\Vert
D^{2}f_{\mathrm{x}}(p)(v,v)\Vert , \\
\overrightarrow{C}_{y} &=&\frac{1}{2}\max_{p\in D,\Vert v\Vert =1}\Vert
D^{2}f_{\mathrm{y}}(p)(v,v)\Vert .
\end{eqnarray*}%
and 
\begin{eqnarray*}
\overrightarrow{C}_{y,1} &=&\sup_{p\in D}\frac{1}{2}\left\Vert \frac{%
\partial ^{2}f_{\mathrm{y}}}{\partial \mathrm{x}^{2}}(p)\right\Vert , \\
\overrightarrow{C}_{y,2} &=&\sup_{p\in D}\left\Vert \frac{\partial ^{2}f_{%
\mathrm{y}}}{\partial \mathrm{x}\partial \mathrm{y}}(p)\right\Vert , \\
\overrightarrow{C}_{y,3} &=&\sup_{p\in D}\frac{1}{2}\left\Vert \frac{%
\partial ^{2}f_{\mathrm{y}}}{\partial \mathrm{y}^{2}}(p)\right\Vert .
\end{eqnarray*}

Then for any $\mathrm{x}$ and $v$ holds (where it makes sense) where $w$ is
defined by (\ref{eq:G-ode-limit}) 
\begin{equation*}
w(\mathrm{x}+v)=w(\mathrm{x})+Dw(\mathrm{x})v+\Delta y(\mathrm{x},v),\quad
\Vert \Delta y(\mathrm{x},v)\Vert \leq M\Vert v\Vert ^{2}
\end{equation*}%
where 
\begin{equation}
M>\frac{(\mathcal{L}\overrightarrow{C}_{x}+\overrightarrow{C}_{y})(1+%
\mathcal{L}^{2})}{2\overrightarrow{\xi }-\overrightarrow{\mu }_{2}}.
\label{eq:ode-M-estm}
\end{equation}%
One can obtain an alternative (giving tighter estimates; see Remark \ref%
{rem:flat-manif}) expression for $M$ 
\begin{equation}
M>\frac{\mathcal{L}\overrightarrow{C}_{x}(1+\mathcal{L}^{2})+\overrightarrow{%
C}_{y,1}+\overrightarrow{C}_{y,2}\mathcal{L}+\overrightarrow{C}_{y,3}%
\mathcal{L}^{2}}{2\overrightarrow{\xi }-\overrightarrow{\mu }_{2}}
\label{eq:ode-M-estm-improved}
\end{equation}%
Hence for any $v\in \mathbb{R}^{u}$ holds 
\begin{equation*}
\left\Vert \frac{1}{2}D^{2}w\left( \mathrm{x}\right) (v,v)\right\Vert \leq
M\Vert v\Vert ^{2}.
\end{equation*}
\end{theorem}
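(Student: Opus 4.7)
The plan is to reduce Theorem~\ref{th:ode-unstable-cones} to its map analog, Theorem~\ref{th:unstable-cones}, applied to the time-$h$ shift map $F_h(z) = \Phi(h,z)$ for sufficiently small $h>0$, and then pass to the limit $h \to 0^+$. By hypothesis, for each $h \in (0,h_0]$ the function $w$ arises as the graph-transform limit for $F_h$, so the same function plays the role of the graph limit in Theorem~\ref{th:unstable-cones} uniformly in $h$.

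First, I would verify that the map-level rate conditions (\ref{eq:ratecond}) hold for $F_h$ once $h$ is small. Using Theorem~\ref{th:ODE-rate-verif} and Lemmas \ref{lem:def-ml-ok}, \ref{lem:lognorm-conv}, \ref{lem:ml-conv}, one obtains the expansions
\begin{align*}
\xi(h,\mathcal{L}) &= 1 + h\overrightarrow{\xi} + O(h^2), \\
\mu_1(h,\mathcal{L}) &= 1 + h\overrightarrow{\mu}_1 + O(h^2), \\
\mu_2(h,\mathcal{L}) &= 1 + h\overrightarrow{\mu}_2 + O(h^2),
\end{align*}
where the $O(h^2)$ remainders are uniform in $z \in D$ (since $D$ is compact and $f \in C^3$). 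The assumption $\overrightarrow{\mu}_1 < \overrightarrow{\xi}$ then gives $\mu_1(h)/\xi(h) < 1$, while $\overrightarrow{\mu}_2 < 2\overrightarrow{\xi}$ gives $\mu_2(h)/\xi(h)^2 < 1$, and positivity of $\xi(h)$ follows from part~\ref{itm:ODE-rate-verif-5} of Theorem~\ref{th:ODE-rate-verif}. Thus (\ref{eq:ratecond}) is verified for $F_h$ with small $h$.

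Next I would estimate the second-derivative constants $C_x, C_y, C_{y,1}, C_{y,2}, C_{y,3}$ appearing in Theorem~\ref{th:unstable-cones} when the map is $F_h$. From the expansion (\ref{eq:d2varphi}), $D^2 \Phi(h,z) = h\, D^2 f(z) + O(h^2)$ uniformly on $D$, so $C_x(h) = h\overrightarrow{C}_x + O(h^2)$, $C_y(h) = h\overrightarrow{C}_y + O(h^2)$, and analogously $C_{y,i}(h) = h\overrightarrow{C}_{y,i} + O(h^2)$ for $i=1,2,3$. Plugging these, together with the rate expansions above, into the map-level bound (\ref{eq:M}) yields
\begin{equation*}
\frac{(\mathcal{L}C_x(h) + C_y(h))(1+\mathcal{L}^2)}{\xi(h)^2 - \mu_2(h)}
= \frac{h(\mathcal{L}\overrightarrow{C}_x + \overrightarrow{C}_y)(1+\mathcal{L}^2) + O(h^2)}{h(2\overrightarrow{\xi} - \overrightarrow{\mu}_2) + O(h^2)},
\end{equation*}
which tends to $(\mathcal{L}\overrightarrow{C}_x + \overrightarrow{C}_y)(1+\mathcal{L}^2)/(2\overrightarrow{\xi} - \overrightarrow{\mu}_2)$ as $h \to 0^+$. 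The analogous calculation with (\ref{eq:M-improved}) gives the refined bound (\ref{eq:ode-M-estm-improved}). Hence any $M$ strictly larger than the right-hand sides of (\ref{eq:ode-M-estm}) or (\ref{eq:ode-M-estm-improved}) satisfies the map-level inequality for all sufficiently small $h$, and Theorem~\ref{th:unstable-cones} applied to $F_h$ delivers the pointwise bound $\|\Delta y(\mathrm{x},v)\| \le M\|v\|^2$, independent of $h$. Since the graph limit $w$ is the same for every $h \in (0,h_0]$, this bound holds for the function $w$ defined by (\ref{eq:G-ode-limit}). The concluding $\|\tfrac12 D^2 w(\mathrm{x})(v,v)\| \le M\|v\|^2$ is then immediate from the jet formulation.

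The main obstacle is the uniformity of the $O(h^2)$ remainders: the expansions of $\mu_i(h,\mathcal{L})$ and $\xi(h,\mathcal{L})$, as well as of the second-derivative constants, must be uniform in $z \in D$ in order that a single $h_0$ work simultaneously for the denominator inequality, the rate conditions, and the second-order estimates. This is precisely what Lemmas~\ref{lem:lognorm-conv} and \ref{lem:ml-conv} supply for the first-order quantities, while compactness of $D$ together with $f \in C^3$ handles the $D^2\Phi$ expansion. Once uniformity is granted, everything else is essentially an algebraic passage to the limit.
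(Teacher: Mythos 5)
Your proposal matches the paper's proof essentially step for step: reduce to Theorem~\ref{th:unstable-cones} for the time-$h$ map $\Phi(h,\cdot)$, use Lemmas~\ref{lem:lognorm-conv} and~\ref{lem:ml-conv} together with the expansion $D^2_z\Phi(h,z)=hD^2f(z)+O(h^2)$ (uniform on $D$) to get $\xi(h)=1+h\overrightarrow{\xi}+O(h^2)$, $\mu_i(h)=1+h\overrightarrow{\mu}_i+O(h^2)$, $C_\cdot(h)=h\overrightarrow{C}_\cdot+O(h^2)$, observe $w$ is independent of $h\in(0,h_0]$, and pass to the limit $h\to0^+$ in the map-level bound. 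No gaps; this is the same argument.
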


\begin{proof}
We derive the result from Theorem~\ref{th:unstable-cones} for time shift by $%
h$ for sufficiently small $h$. From the proof of Theorem \ref{th:main-ode}
(in section \ref{sec:wcu-ode-proof}) we know that the limit (\ref%
{eq:G-ode-limit}) is independent of $h\in (0,h_{0}]$, provided that $h_{0}$
is small enough.

Let us fix $h\in (0,h_{0}]$. We define 
\begin{eqnarray*}
\xi (h) &=&\inf_{z\in D}m_{l}\left( \frac{\partial \Phi _{\mathrm{x}}}{%
\partial \mathrm{x}}(h,z)\right) -\mathcal{L}\sup_{z\in D}\left\Vert \frac{%
\partial \Phi _{\mathrm{x}}}{\partial \mathrm{y}}(h,z)\right\Vert , \\
\mu _{2}(h) &=&\sup_{z\in D}l\left( \frac{\partial \Phi _{\mathrm{y}}}{%
\partial \mathrm{y}}(h,z)\right) +\mathcal{L}\sup_{z\in D}\left\Vert \frac{%
\partial \Phi _{\mathrm{x}}}{\partial \mathrm{y}}(h,z)\right\Vert , \\
\mu _{1}(h) &=&\sup_{z\in D}l\left( \frac{\partial \Phi _{\mathrm{y}}}{%
\partial \mathrm{y}}(h,z)\right) +\frac{1}{\mathcal{L}}\sup_{z\in
D}\left\Vert \frac{\partial \Phi _{\mathrm{y}}}{\partial \mathrm{x}}%
(h,z)\right\Vert ,
\end{eqnarray*}

Let 
\begin{eqnarray*}
C_{x}(h) &=&\frac{1}{2}\max_{p\in D,\Vert v\Vert =1}\Vert D^{2}\Phi _{%
\mathrm{x}}(h,p)(v,v)\Vert , \\
C_{y}(h) &=&\frac{1}{2}\max_{p\in D,\Vert v\Vert =1}\Vert D^{2}\Phi _{%
\mathrm{y}}(h,p)(v,v)\Vert .
\end{eqnarray*}%
and 
\begin{eqnarray*}
C_{y,1}(h) &=&\sup_{p\in D}\frac{1}{2}\left\Vert \frac{\partial ^{2}\Phi _{%
\mathrm{y}}}{\partial \mathrm{x}^{2}}(p)\right\Vert , \\
C_{y,2}(h) &=&\sup_{p\in D}\left\Vert \frac{\partial ^{2}\Phi _{\mathrm{y}}}{%
\partial \mathrm{x}\partial \mathrm{y}}(h,p)\right\Vert , \\
C_{y,3}(h) &=&\sup_{p\in D}\frac{1}{2}\left\Vert \frac{\partial ^{2}\Phi _{%
\mathrm{y}}}{\partial \mathrm{y}^{2}}(p)\right\Vert .
\end{eqnarray*}

We have 
\begin{eqnarray*}
\Phi (h,z) &=&z+hf(z)+O(h^{2}) \\
\frac{\partial \Phi }{\partial z}(h,z) &=&I+hDf(z)+O(h^{2}), \\
D_{z}^{2}\Phi (h,z) &=&hD^{2}f(z)+O(h^{2}).
\end{eqnarray*}%
where the $O(h^{2})$ are uniform in $z$ for $z\in D$.

Using Lemma \ref{lem:ml-conv} we obtain 
\begin{eqnarray*}
m\left( \frac{\partial \Phi _{\mathrm{x}}}{\partial \mathrm{x}}(h,z)\right)
&=&m\left( I+h\frac{\partial f_{\mathrm{x}}}{\partial \mathrm{x}}%
+O(h^{2})\right) \\
&=&1+hm_{l}\left( \frac{\partial f_{\mathrm{x}}}{\partial \mathrm{x}}%
(z)\right) +O(h^{2}).
\end{eqnarray*}%
From Lemma \ref{lem:lognorm-conv} we obtain 
\begin{eqnarray*}
\left\Vert \frac{\partial \Phi _{\mathrm{y}}}{\partial \mathrm{y}}%
(h,z)\right\Vert &=&\left\Vert I+h\frac{\partial f_{\mathrm{y}}}{\partial 
\mathrm{y}}(z)+O(h^{2})\right\Vert \\
&=&1+hl\left( \frac{\partial f_{\mathrm{y}}}{\partial \mathrm{y}}(z)\right)
+O(h^{2}).
\end{eqnarray*}%
And finally 
\begin{eqnarray*}
\left\Vert \frac{\partial \Phi _{\mathrm{x}}}{\partial \mathrm{y}}%
(h,z)\right\Vert &=&\left\Vert h\frac{\partial f_{\mathrm{x}}}{\partial 
\mathrm{y}}(z)+O(h^{2})\right\Vert =h\left\Vert \frac{\partial f_{\mathrm{x}}%
}{\partial \mathrm{y}}(z)\right\Vert +O(h^{2}), \\
\left\Vert \frac{\partial \Phi _{\mathrm{y}}}{\partial \mathrm{x}}%
(h,z)\right\Vert &=&\left\Vert h\frac{\partial f_{\mathrm{y}}}{\partial 
\mathrm{x}}(z)+O(h^{2})\right\Vert =h\left\Vert \frac{\partial f_{\mathrm{y}}%
}{\partial \mathrm{x}}(z)\right\Vert +O(h^{2}).
\end{eqnarray*}%
By combining the above formulas we obtain 
\begin{eqnarray*}
\xi (h) &=&1+h\overrightarrow{\xi }+O(h^{2}), \\
\mu _{1}(h) &=&1+h\overrightarrow{\mu }_{1}+O(h^{2}), \\
\mu _{2}(h) &=&1+h\overrightarrow{\mu }_{2}+O(h^{2}), \\
C_{x}(h) &=&h\overrightarrow{C}_{x}+O(h^{2}), \\
C_{y}(h) &=&h\overrightarrow{C}_{y}+O(h^{2}), \\
C_{y,1}(h) &=&h\overrightarrow{C}_{y,1}+O(h^{2}), \\
C_{y,2}(h) &=&h\overrightarrow{C}_{y,2}+O(h^{2}), \\
C_{y,3}(h) &=&h\overrightarrow{C}_{y,3}+O(h^{2}).
\end{eqnarray*}

From (\ref{eq:ode-mu-rates}) and the above equalities we have for $h$
sufficiently small 
\begin{equation*}
\xi (h)>0,\qquad \frac{\mu _{1}(h)}{\xi (h)}<1,\qquad \frac{\mu _{2}(h)}{\xi
(h)^{2}}<1,
\end{equation*}%
hence we can apply Theorem~\ref{th:unstable-cones} to the map $F=\Phi
(h,\cdot )$ to obtain that%
\begin{equation*}
w(\mathrm{x}+v)=w(\mathrm{x})+Dw(\mathrm{x})v+\Delta y(\mathrm{x},v),\quad
\Vert \Delta y(\mathrm{x},v)\Vert \leq M(h)\Vert v\Vert ^{2},
\end{equation*}%
and%
\begin{equation*}
\left\Vert \frac{1}{2}D^{2}w\left( \mathrm{x}\right) (v,v)\right\Vert \leq
M(h)\Vert v\Vert ^{2},
\end{equation*}%
for any $M(h)$ satisfying (based on (\ref{eq:M})), 
\begin{equation}
M(h)>\frac{(\mathcal{L}C_{x}(h)+C_{y}(h))(1+\mathcal{L}^{2})}{\xi
(h)^{2}-\mu _{2}(h)},  \label{eq:ode-Mh}
\end{equation}%
or (based on (\ref{eq:M-improved})), 
\begin{equation*}
M(h)>\frac{\mathcal{L}C_{x}(h)(1+\mathcal{L}^{2})+C_{y,1}(h)+C_{y,2}(h)%
\mathcal{L}+C_{y,3}(h)\mathcal{L}^{2}}{\xi (h)^{2}-\mu _{2}(h)}.
\end{equation*}

Let us pass to the limit $h\rightarrow 0$ in (\ref{eq:ode-Mh}). Then we have 
\begin{multline*}
\frac{(\mathcal{L}C_{x}(h)+C_{y}(h))(1+\mathcal{L}^{2})}{\xi (h)^{2}-\mu
_{2}(h)}=\frac{(\mathcal{L}h\overrightarrow{C}_{x}+O(h^{2})+h\overrightarrow{%
C}_{y}+O(h^{2}))(1+\mathcal{L}^{2})}{(1+h\overrightarrow{\xi }%
+O(h^{2}))^{2}-(1+h\overrightarrow{\mu }_{2}+O(h^{2}))}= \\
\frac{h(\mathcal{L}\overrightarrow{C}_{x}+\overrightarrow{C}_{y}+O(h))(1+%
\mathcal{L}^{2})}{h(2\overrightarrow{\xi }-\overrightarrow{\mu }_{2}+O(h))}%
\rightarrow \frac{(\mathcal{L}\overrightarrow{C}_{x}+\overrightarrow{C}%
_{y})(1+\mathcal{L}^{2})}{2\overrightarrow{\xi }-\overrightarrow{\mu }_{2}}%
,\quad h\rightarrow 0.
\end{multline*}%
This establishes (\ref{eq:ode-M-estm}). The proof of (\ref%
{eq:ode-M-estm-improved}) is analogous.
\end{proof}

%TCIDATA{Version=5.00.0.2606}
%TCIDATA{LaTeXparent=0,0,MMFedit.tex}

\section{Example of application\label{sec:num-example}}

We consider the following ODE%
\begin{equation}
(x,y)^{\prime }=f_{\varepsilon }\left( x,y,t\right) ,  \label{eq:example-ode}
\end{equation}%
\begin{equation*}
f_{\varepsilon }\left( x,y,t\right) =\left( y-\varepsilon \cos
(t)y^{2},x-x^{2}\right) ,
\end{equation*}%
which is a perturbation of the following Hamiltonian system%
\begin{align*}
q^{\prime }& =J\nabla H, \\
H\left( x,y\right) & =\frac{1}{2}\left( y^{2}-x^{2}\right) +\frac{1}{3}x^{3}.
\end{align*}%
The unperturbed system has a homoclinic orbit to the fixed point $\left(
0,0\right) $, which is depicted in Figure \ref{fig:fish}.

\begin{figure}[tbp]
\centering
\includegraphics[width=6cm]{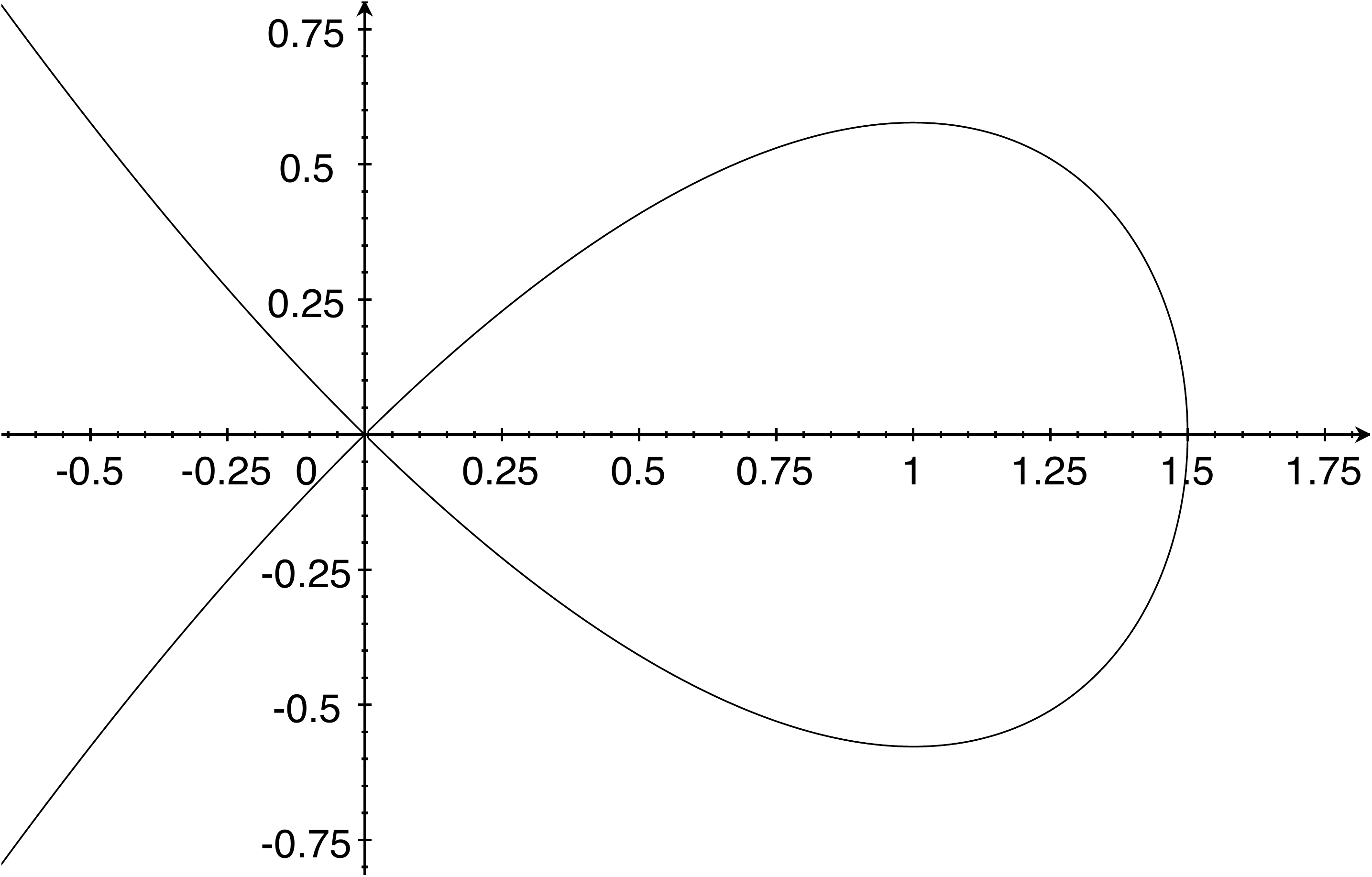}
\caption{The homoclinic orbit for $\protect\varepsilon =0$.}
\label{fig:fish}
\end{figure}

\subsection{Approximating the unstable manifold}

We first consider $\varepsilon =0.$ After a linear change of coordinates%
\begin{equation*}
\left( x,y\right) =C\left( u,v\right) ,
\end{equation*}%
\begin{equation}
C=\left( 
\begin{array}{ll}
1 & -1 \\ 
1 & 1%
\end{array}%
\right) ,\qquad \qquad C^{-1}=\left( 
\begin{array}{cc}
\frac{1}{2} & \frac{1}{2} \\ 
-\frac{1}{2} & \frac{1}{2}%
\end{array}%
\right) ,  \label{eq:linear-e-zero}
\end{equation}%
the ODE becomes%
\begin{equation*}
\left( u,v\right) ^{\prime }=F\left( u,v\right) =\left( 
\begin{array}{c}
u-\frac{1}{2}\left( u-v\right) ^{2} \\ 
-v-\frac{1}{2}\left( u-v\right) ^{2}%
\end{array}%
\right) .
\end{equation*}

Below we quickly describe how the unstable manifold can be approximated
using the parametrization method (for a detailed overview of the method see 
\cite{param-method}). We look for a function $K:\left( -r,r\right) \mathbb{%
\rightarrow R}^{2}$ and $R:\mathbb{R\rightarrow R}$ so that%
\begin{equation}
F\circ K\left( \xi \right) =DK\left( \xi \right) R\left( \xi \right) .
\label{eq:cohomology-eq}
\end{equation}%
The Taylor coefficients can be computed by power matching in the equation (%
\ref{eq:cohomology-eq}). There is a certain freedom regarding the choice of
the coefficients, and we have chosen them so that%
\begin{equation*}
K\left( \xi \right) =\left( \xi ,K_{2}\left( \xi \right) \right) .
\end{equation*}%
For our coordinate change we expand (\ref{eq:cohomology-eq}) only to powers
of three and choose%
\begin{align*}
K_{2}\left( \xi \right) & =-\frac{1}{6}\xi ^{2}-\frac{1}{12}\xi ^{3}, \\
R\left( \xi \right) & =\xi -\frac{1}{2}\xi ^{2}-\frac{1}{6}\xi ^{3}.
\end{align*}%
The set 
\begin{equation}
W^{u}\approx \left\{ \left( \xi ,K_{2}\left( \xi \right) \right) :\xi \in
\left( -r,r\right) \right\} ,  \label{eq:Wu-approx}
\end{equation}%
is an approximation of the unstable manifold.

\subsection{Approximating the stable manifold}

In this section we also consider $\varepsilon =0$. The parametrization of
the stable manifold follows from the reversing symmetry of the system: If we
let $\Phi _{t}$ stand for the flow, and $S(x,y)=\left( x,-y\right) $, then%
\begin{equation*}
\Phi _{t}\left( S(q)\right) =S\left( \Phi _{-t}\left( q\right) \right) .
\end{equation*}%
This means that the stable manifold is parameterized by%
\begin{equation*}
w^{s}\left( \xi \right) =S\left( w^{u}\left( \xi \right) \right) .
\end{equation*}%
For our later consideration, it will be convenient to consider coordinates
in which it the stable manifold is tangent to the $x$-axis. This can be
obtained by taking (see (\ref{eq:linear-e-zero}) for the definition of $C$)%
\begin{equation*}
T=\left( 
\begin{array}{ll}
0 & 1 \\ 
-1 & 0%
\end{array}%
\right) ,\qquad \qquad \bar{C}=CT=\left( 
\begin{array}{cc}
1 & 1 \\ 
-1 & 1%
\end{array}%
\right) 
\end{equation*}%
and computing%
\begin{align}
w^{s}\left( \xi \right) & =SC\left( \xi ,K_{2}\left( \xi \right) \right) 
\label{eq:ws-by-symmetry} \\
& =\bar{C}\left( \xi ,-K_{2}\left( \xi \right) \right) .  \notag
\end{align}%
We can therefore take $\bar{C}$ as the linear change of coordinates, and in
these local coordinates the stable manifold is parameterized by 
\begin{equation}
\xi \rightarrow \left( \xi ,-K_{2}\left( \xi \right) \right) .
\label{eq:ws-parameterization}
\end{equation}

\subsection{Suitable change of coordinates for the unstable manifold}

Now we consider $\varepsilon \geq 0.$ We treat the system (\ref%
{eq:example-ode}) in the coordinates $\left( x,\varepsilon ,t,y\right) $,%
\begin{equation}
\left( x,\varepsilon ,t,y\right) ^{\prime }=f\left( x,\varepsilon
,t,y\right) ,  \label{eq:Fish-ode}
\end{equation}%
with the vector field 
\begin{equation*}
f\left( x,\varepsilon ,t,y\right) =\left( y-\varepsilon \cos
(t)y^{2},0,1,x-x^{2}\right) .
\end{equation*}

Observe that for each $\varepsilon \geq 0$ we have the periodic orbit%
\begin{equation*}
\Lambda _{\varepsilon }=\left\{ \left( 0,\varepsilon ,t,0\right) :t\in 
\mathbb{S}^{1}\right\} .
\end{equation*}

We go through the following change of coordinates%
\begin{equation}
\left( x,\varepsilon ,t,y\right) =\tilde{C}_{u}\,\psi _{u}\left( \bar{x}%
,\varepsilon ,t,\bar{y}\right) ,  \label{eq:unstable-coord-change}
\end{equation}%
where $\tilde{C}$ is a linear change, motivated by (\ref{eq:linear-e-zero}),%
\begin{equation*}
\tilde{C}_{u}=\left( 
\begin{array}{llll}
1 & 0 & 0 & -1 \\ 
0 & 1 & 0 & 0 \\ 
0 & 0 & 1 & 0 \\ 
1 & 0 & 0 & 1%
\end{array}%
\right) ,\qquad \qquad \tilde{C}_{u}^{-1}=\left( 
\begin{array}{llll}
\frac{1}{2} & 0 & 0 & \frac{1}{2} \\ 
0 & 1 & 0 & 0 \\ 
0 & 0 & 1 & 0 \\ 
-\frac{1}{2} & 0 & 0 & \frac{1}{2}%
\end{array}%
\right) ,
\end{equation*}%
and $\psi _{u}$ is a nonlinear change motivated by (\ref{eq:Wu-approx}), 
\begin{equation*}
\psi _{u}\left( \bar{x},\varepsilon ,t,y\right) =\left( \bar{x},\varepsilon
,t,\bar{y}+K_{2}\left( \bar{x}\right) \right) .
\end{equation*}%
The $\psi _{u}$ is simple to invert%
\begin{equation*}
\psi _{u}^{-1}\left( a,\varepsilon ,t,b\right) =\left( a,\varepsilon
,t,b-K_{2}\left( a\right) \right) .
\end{equation*}

\begin{remark}
Our change of coordinates is independent of $\varepsilon $. It is motivated
by the approximation of the manifold for $\varepsilon =0$, which is also a
good approximation for small $\varepsilon $. For our method to work, the
coordinates do not need to be perfectly aligned with the dynamics. An
approximate alignment is sufficient.
\end{remark}

It is a simple task (though slightly laborious) to derive the formula for
the vector field in the local coordinates $\left( \bar{x},\varepsilon ,t,%
\bar{y}\right) $ 
\begin{equation}
\tilde{f}\left( \bar{x},\varepsilon ,t,\bar{y}\right) =\left( \tilde{f}%
_{1}\left( \bar{x},\varepsilon ,t,\bar{y}\right) ,0,1,-K_{2}^{\prime }\left( 
\bar{x}\right) \tilde{f}_{1}\left( \bar{x},\varepsilon ,t,\bar{y}\right) +%
\tilde{h}\left( \bar{x},\varepsilon ,t,\bar{y}\right) \right) ,
\label{eq:fish-local-forw}
\end{equation}%
where%
\begin{align*}
\tilde{f}_{1}\left( \bar{x},\varepsilon ,t,\bar{y}\right) & =\bar{x}-\frac{1%
}{2}\varepsilon \left( \cos t\right) \left( \bar{x}+\bar{y}+K_{2}\left( \bar{%
x}\right) \right) ^{2}-\frac{1}{2}\left( \bar{x}-\bar{y}-K_{2}\left( \bar{x}%
\right) \right) ^{2}, \\
\tilde{h}\left( \bar{x},\varepsilon ,t,\bar{y}\right) & =-\bar{y}%
-K_{2}\left( \bar{x}\right) +\frac{1}{2}\varepsilon \left( \cos t\right)
\left( \bar{x}+\bar{y}+K_{2}\left( \bar{x}\right) \right) ^{2}-\frac{1}{2}%
\left( \bar{x}-\bar{y}-K_{2}\left( \bar{x}\right) \right) ^{2}.
\end{align*}

\subsection{Suitable change of coordinates for the stable manifold}

The stable manifold of (\ref{eq:Fish-ode}) coincides with the unstable
manifold of an ODE with reversed sign:%
\begin{equation}
\left( x,\varepsilon ,t,y\right) ^{\prime }=-f\left( x,\varepsilon
,t,y\right) .  \label{eq:Fish-ode-reversed}
\end{equation}

\begin{remark}
We consider the formulation with reversed sign vector field, since in all
our discussions we have talked about unstable manifolds. This way we can use
our methods directly. The bounds on the unstable manifold of (\ref%
{eq:Fish-ode-reversed}) will be the bounds for the stable manifold for (\ref%
{eq:Fish-ode}).
\end{remark}

We consider the change of coordinates%
\begin{equation*}
\left( x,\varepsilon ,t,y\right) =\tilde{C}_{s}\,\psi _{s}\left( \bar{x}%
,\varepsilon ,t,\bar{y}\right) ,
\end{equation*}%
with $\tilde{C}_{s}$ motivated by (\ref{eq:ws-by-symmetry}), 
\begin{equation*}
\tilde{C}_{s}=\left( 
\begin{array}{llll}
1 & 0 & 0 & 1 \\ 
0 & 1 & 0 & 0 \\ 
0 & 0 & 1 & 0 \\ 
-1 & 0 & 0 & 1%
\end{array}%
\right) \qquad \tilde{C}_{s}=\left( 
\begin{array}{cccc}
\frac{1}{2} & 0 & 0 & -\frac{1}{2} \\ 
0 & 1 & 0 & 0 \\ 
0 & 0 & 1 & 0 \\ 
\frac{1}{2} & 0 & 0 & \frac{1}{2}%
\end{array}%
\right) ,
\end{equation*}%
and $\psi _{s}$ motivated by (\ref{eq:ws-parameterization}), 
\begin{align*}
\psi _{s}\left( \bar{x},\varepsilon ,t,y\right) & =\left( \bar{x}%
,\varepsilon ,t,\bar{y}-K_{2}\left( \bar{x}\right) \right) , \\
\psi _{s}^{-1}\left( \bar{x},\varepsilon ,t,y\right) & =\left( \bar{x}%
,\varepsilon ,t,\bar{y}+K_{2}\left( \bar{x}\right) \right) .
\end{align*}%
The vector field (\ref{eq:Fish-ode-reversed}) rewritten in these local
coordinates is%
\begin{equation}
\hat{f}\left( \bar{x},\varepsilon ,t,\bar{y}\right) =\left( \hat{f}%
_{1}\left( \bar{x},\varepsilon ,t,\bar{y}\right) ,0,-1,K_{2}^{\prime }(\bar{x%
})\hat{f}_{1}\left( \bar{x},\varepsilon ,t,\bar{y}\right) +\hat{h}\left( 
\bar{x},\varepsilon ,t,\bar{y}\right) \right) ,
\label{eq:fish-local-reversed}
\end{equation}%
with%
\begin{align*}
\hat{f}_{1}\left( \bar{x},\varepsilon ,t,\bar{y}\right) & =\bar{x}+\frac{1}{2%
}\varepsilon \left( \cos t\right) \left( -\bar{x}+\bar{y}-K_{2}\left( \bar{x}%
\right) \right) ^{2}-\frac{1}{2}\left( \bar{x}+\bar{y}-K_{2}\left( \bar{x}%
\right) \right) ^{2}, \\
\hat{h}\left( \bar{x},\varepsilon ,t,\bar{y}\right) & =-\bar{y}+K_{2}\left( 
\bar{x}\right) +\frac{1}{2}\varepsilon \left( \cos t\right) \left( -\bar{x}+%
\bar{y}-K_{2}\left( \bar{x}\right) \right) ^{2}+\frac{1}{2}\left( \bar{x}+%
\bar{y}-K_{2}\left( \bar{x}\right) \right) ^{2}.
\end{align*}

\subsection{Bounds on the unstable and stable manifolds}

In our computer assisted proof, we have used the vector field $\tilde{f}$
(see (\ref{eq:fish-local-forw})), to establish the existence and bound for $%
W^{u}\left( \Lambda _{\varepsilon }\right) $ inside of the set%
\begin{equation*}
D=\left[ -r,r\right] \times E\times \mathbb{S}^{1}\times \left[ -rL(E),rL(E)%
\right] ,
\end{equation*}%
for%
\begin{equation*}
r=2\cdot 10^{-4},
\end{equation*}%
and for various parameter intervals $E$. The size on the set depends through 
$L(E)$ on the range $E$ of the parameter $\varepsilon $ considered. For $E=%
\left[ 0,10^{-3}\right] $, which is the first parameter interval we
consider, we obtain that Theorem \ref{th:main-ode} can be applied with
constants $\overrightarrow{\mu _{s,1}}$, $\overrightarrow{\mu _{s,2}}$, $%
\overrightarrow{\xi _{u,1}}$, $\overrightarrow{\xi _{u,1,P}}$, $%
\overrightarrow{\mu _{cs,1}}$, $\overrightarrow{\mu _{cs,2}}$, $%
\overrightarrow{\xi _{cu,1}}$ and $\overrightarrow{\xi _{cu,1,P}}$ with the
following choice of the constant $L$: 
\begin{equation*}
L=L(E)=L\left( \left[ 0,10^{-3}\right] \right) =6.278276608\cdot 10^{-6}.
\end{equation*}%
In our code, the $L(E)$ is chosen automatically by the program to be as
small as possible to establish sharp bounds on the derivatives of $w^{cu}.$

From Theorem \ref{th:main-ode} we know that the function $w^{cu}$ is
Lipschitz with constant $L(E)$. Thus, 
\begin{equation*}
\frac{\partial w^{cu}}{\partial \bar{x}}\left( D\right) ,\frac{\partial
w^{cu}}{\partial \varepsilon }\left( D\right) ,\frac{\partial w^{cu}}{%
\partial t}\left( D\right) \in L\left( E\right) \cdot \lbrack -1,1].
\end{equation*}%
Bounds on the second derivatives also depend on the choice of $E.$ They can
be established using Theorem \ref{th:ode-unstable-cones}. For example, for $%
E=\left[ 0,10^{-3}\right] $, we obtained 
\begin{equation*}
M=M(E)=1.\,\allowbreak 127\,1\times 10^{-3}.
\end{equation*}%
Thus, for $\varepsilon \in \left[ 0,10^{-3}\right] ,$ 
\begin{equation*}
\frac{\partial ^{2}w^{cu}}{\partial v\partial w}\left( D\right) \in 2M\cdot
\lbrack -1,1]\qquad \text{for }v,w\in \{\bar{x},\varepsilon ,t\}.
\end{equation*}

The bounds can then be transported through the change of coordinates (\ref%
{eq:unstable-coord-change}). This is done automatically by the CAPD library,
which has an implementation of rigorous manipulation on jets.

Similar bounds can be obtained for the stable manifold by considering the
vector field $\hat{f}$ (with reversed time) given in (\ref%
{eq:fish-local-reversed}). The bounds on the slope of the stable manifold
and on the second derivatives are indistinguishable from those of the
unstable manifold, up to the accuracy which we have used above to display
results.

\subsection{The transversal intersections of manifolds}

We recall that by (\ref{eq:delta-def}) 
\begin{equation*}
\delta \left( \varepsilon ,\tau \right) :=\pi _{x}p^{u}\left( \varepsilon
,\tau \right) -\pi _{x}p^{s}\left( \varepsilon ,\tau \right) ,
\end{equation*}%
where $p^{u}$ and $p^{s}$ are defined in (\ref{eq:p-e-tau-def}).

We first consider $\varepsilon \in \left[ 0,10^{-3}\right] $. In the left
hand side of Figure \ref{fig:DeltaE} we give a plot of a computer assisted
bound for 
\begin{equation*}
\tau \rightarrow \pi _{x}\frac{\partial p^{u}}{\partial \varepsilon }\left(
\varepsilon ,\tau \right) \qquad \text{and}\qquad \tau \rightarrow \pi _{x}%
\frac{\partial p^{s}}{\partial \varepsilon }\left( \varepsilon ,\tau \right)
.
\end{equation*}%
For $\tau $ close to $4.6$, for all $\varepsilon \in \left[ 0,10^{-3}\right] 
$ we have 
\begin{equation*}
\pi _{x}\frac{\partial p^{u}}{\partial \varepsilon }\left( \varepsilon ,\tau
\right) >\pi _{x}\frac{\partial p^{s}}{\partial \varepsilon }\left(
\varepsilon ,\tau \right) ,
\end{equation*}%
hence for these $\tau $%
\begin{equation*}
\frac{d}{d\varepsilon }\delta \left( \varepsilon ,\tau \right) >0.
\end{equation*}%
Analogously, for the $\tau $ close to $4.8$ we have $\frac{d}{d\varepsilon }%
\delta \left( \varepsilon ,\tau \right) <0.$ The right hand side of Figure %
\ref{fig:DeltaE} contains the plots of 
\begin{equation*}
\tau \rightarrow \pi _{x}\frac{\partial ^{2}p^{u}}{\partial \tau \partial
\varepsilon }\left( \varepsilon ,\tau \right) \qquad \text{and}\qquad \tau
\rightarrow \pi _{x}\frac{\partial ^{2}p^{s}}{\partial \tau \partial
\varepsilon }\left( \varepsilon ,\tau \right) .
\end{equation*}%
For all $\varepsilon \in \left[ 0,10^{-3}\right] $ and the considered range
of $\tau $ we have 
\begin{equation*}
\pi _{x}\frac{\partial ^{2}p^{u}}{\partial \tau \partial \varepsilon }\left(
\varepsilon ,\tau \right) <0\qquad \text{and\qquad }\pi _{x}\frac{\partial
^{2}p^{s}}{\partial \tau \partial \varepsilon }\left( \varepsilon ,\tau
\right) >0,
\end{equation*}%
hence 
\begin{equation*}
\frac{d^{2}}{d\tau d\varepsilon }\delta \left( \varepsilon ,\tau \right) <0.
\end{equation*}

This way, by using Theorem \ref{th:main}, we obtain a proof of the
transversal intersections of $W^{u}\left( \Lambda _{\varepsilon }\right) $
with $W^{s}\left( \Lambda _{\varepsilon }\right) $ for $\varepsilon \in
(0,10^{-3}].$ The computations needed for this result took under 3 seconds
on a single 3GHz Intel i7 core processor.

\begin{figure}[tbp]
\centering
\includegraphics[width=6cm]{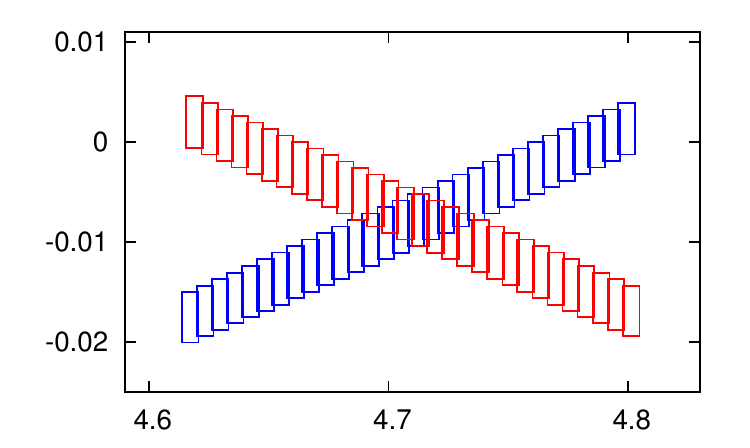} %
\includegraphics[width=6cm]{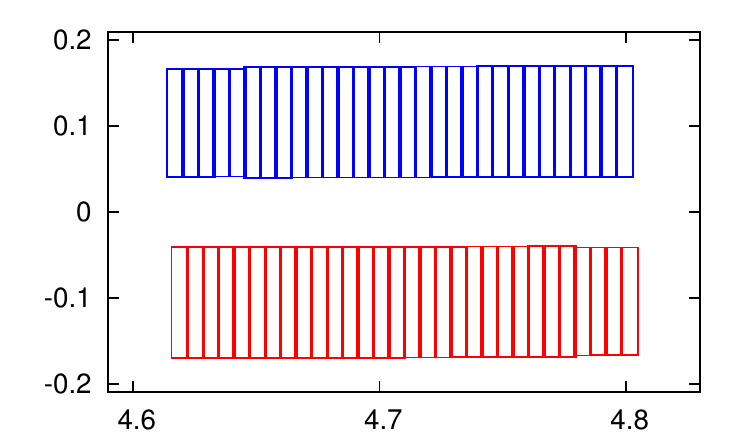}
\caption{Left: The plot of the bounds on $\protect\tau \rightarrow \protect%
\pi _{x}\frac{\partial p^{u}}{\partial \protect\varepsilon }\left( \protect%
\varepsilon ,\protect\tau \right) $ in red and $\protect\tau \rightarrow 
\protect\pi _{x}\frac{\partial p^{s}}{\partial \protect\varepsilon }\left( 
\protect\varepsilon ,\protect\tau \right) $ in blue. Right: The plot of $%
\protect\tau \rightarrow \protect\pi _{x}\frac{\partial ^{2}p^{u}}{\partial 
\protect\tau \partial \protect\varepsilon }\left( \protect\varepsilon ,%
\protect\tau \right) $ in red and $\protect\tau \rightarrow \protect\pi _{x}%
\frac{\partial ^{2}p^{s}}{\partial \protect\tau \partial \protect\varepsilon 
}\left( \protect\varepsilon ,\protect\tau \right) $ in blue. In both plots
we have $\protect\tau $ on the $x$-axis. The bounds are for $\protect%
\varepsilon \in \left[ 0,10^{-3}\right] $.}
\label{fig:DeltaE}
\end{figure}

It turns out that the perturbation $\varepsilon =10^{-3}$ is relatively
\textquotedblleft large". From such parameter we can directly observe,
through rigorous numerics, that $W^{u}\left( \Lambda _{\varepsilon }\right) $
and $W^{s}\left( \Lambda _{\varepsilon }\right) $ intersect transversally.
This can be seen by directly plotting bounds on $\tau \rightarrow \pi
_{x}p^{u}\left( \varepsilon ,\tau \right) $ and $\tau \rightarrow \pi
_{x}p^{s}\left( \varepsilon ,\tau \right) $. Such bounds, for $\varepsilon
\in \left[ 10^{-3},10^{-3}+10^{-4}\right] $, are given in the left hand side
plot of Figure \ref{fig:continuation1}. This way we establish that $%
W^{u}\left( \Lambda _{\varepsilon }\right) $ and $W^{s}\left( \Lambda
_{\varepsilon }\right) $ intersect (see the left hand side plot in Figure %
\ref{fig:continuation1}). To show that this intersection is transversal we
consider bounds on (right plot in Figure \ref{fig:continuation1}) 
\begin{equation*}
\tau \rightarrow \pi _{x}\frac{\partial p^{u}}{\partial \tau }\left(
\varepsilon ,\tau \right) \qquad \text{and}\qquad \tau \rightarrow \pi _{x}%
\frac{\partial p^{s}}{\partial \tau }\left( \varepsilon ,\tau \right) .
\end{equation*}
These bounds establish that for the investigated range of $\tau $, the
function $\tau \rightarrow \delta \left( \varepsilon ,\tau \right) $ is
strictly decreasing. Thus, the intersection between $W^{u}\left( \Lambda
_{\varepsilon }\right) $ and $W^{s}\left( \Lambda _{\varepsilon }\right) $
is transversal. 
\begin{figure}[tbp]
\centering\includegraphics[width=6cm]{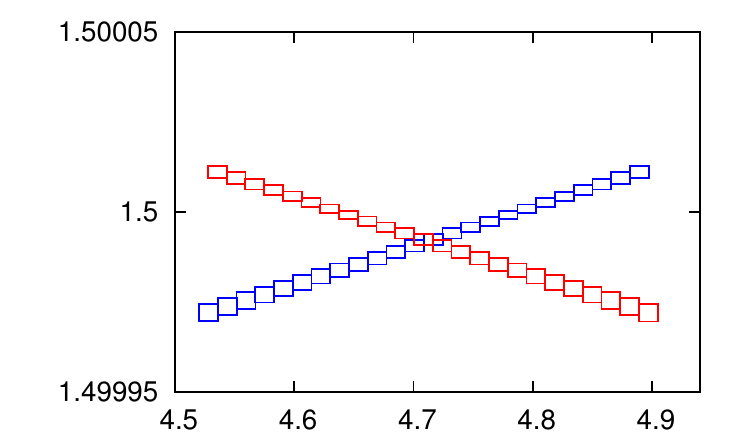} %
\includegraphics[width=6cm]{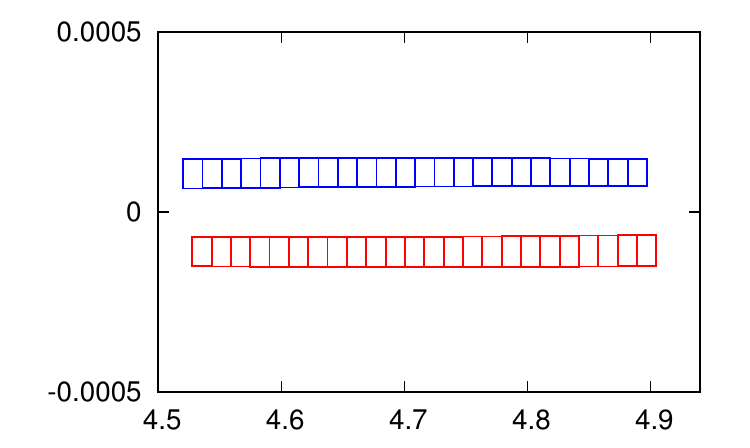}
\caption{Left: The plot of $\protect\tau \rightarrow \protect\pi %
_{x}p^{u}\left( \protect\varepsilon ,\protect\tau \right) $ in red and $%
\protect\tau \rightarrow \protect\pi _{x}p^{s}\left( \protect\varepsilon ,%
\protect\tau \right) $ in blue. Right: The plot of $\protect\tau \rightarrow 
\frac{\partial p^{u}}{\partial \protect\tau }\left( \protect\varepsilon ,%
\protect\tau \right) $ in red and $\protect\tau \rightarrow \frac{\partial
p^{s}}{\partial \protect\tau }\left( \protect\varepsilon ,\protect\tau %
\right) $ in blue. We have $\protect\tau $ on the $x$-axis. The bounds are
for $\protect\varepsilon \in \left[ 10^{-3},10^{-3}+10^{-4}\right] $.}
\label{fig:continuation1}
\end{figure}

This procedure can be continued by considering other interval parameters. We
have investigated the range $\left[ 10^{-3},10^{-2}\right] $, by dicing it
into $90$ intervals of length $10^{-4}$. The results are given in Figure \ref%
{fig:continuation2}, where we have highlighted the bounds for $\varepsilon
\in \left[ 10^{-3},10^{-3}+10^{-4}\right] $ in black. Thus, the black part
of Figure \ref{fig:continuation2} corresponds to Figure \ref%
{fig:continuation1} (only in different scale on the vertical coordinate). In
gray we have highlighted the bounds for $\varepsilon \in \left[
10^{-2}-10^{-4},10^{-2}\right] $, which was the last of the $90$ considered
parameter intervals. Each of the $90$ intervals took around half a second on
a single 3GHz Intel i7 core processor. (The computation for the $90$
intervals in total took $54$ seconds on the single core.)

In sum, in this example we have established a computer assisted proof of
transversal intersections of $W^{u}\left( \Lambda _{\varepsilon }\right) $
and $W^{s}\left( \Lambda _{\varepsilon }\right) $ for all $\varepsilon \in
(0,10^{-2}]$. The whole computation time required under one minute on a
single processor. There is no obstacle of course to continue such proof for
larger $\varepsilon $. The subtle part was how to separate from $\varepsilon
=0$. Once relatively far away, one can continue with ease.

\begin{figure}[tbp]
\centering\includegraphics[width=6cm]{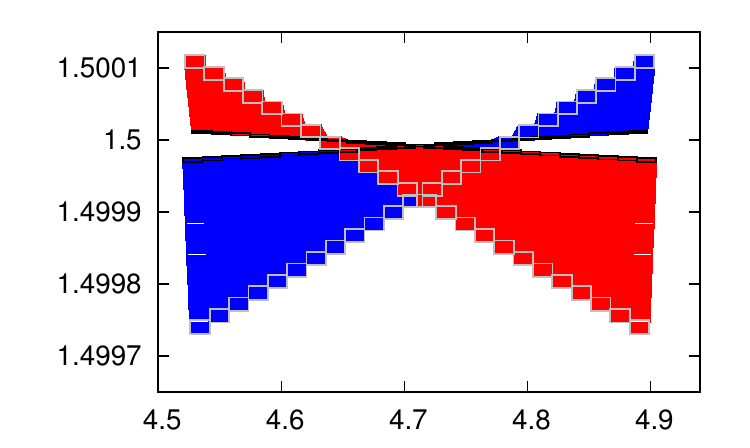} %
\includegraphics[width=6cm]{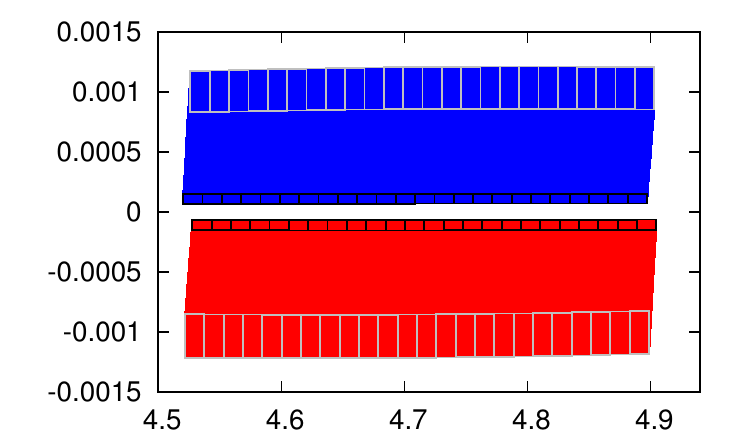}
\caption{ Left: The plot of $\protect\tau \rightarrow \protect\pi %
_{x}p^{u}\left( \protect\varepsilon ,\protect\tau \right) $ in red and $%
\protect\tau \rightarrow \protect\pi _{x}p^{s}\left( \protect\varepsilon ,%
\protect\tau \right) $ in blue. Right: The plot of $\protect\tau \rightarrow 
\protect\pi _{x}\frac{\partial p^{u}}{\partial \protect\tau }\left( \protect%
\varepsilon ,\protect\tau \right) $ in red and $\protect\tau \rightarrow 
\protect\pi _{x}\frac{\partial p^{s}}{\partial \protect\tau }\left( \protect%
\varepsilon ,\protect\tau \right) $ in blue. We have $\protect\tau $ on the $%
x$-axis. The bounds are for $\protect\varepsilon \in \left[ 10^{-3},10^{-2}%
\right] $.}
\label{fig:continuation2}
\end{figure}

%TCIDATA{Version=5.00.0.2606}
%TCIDATA{LaTeXparent=0,0,MMFedit.tex}

\section{Acknowledgements}

We would like to thank Daniel Wilczak for his advice and discussions
concerning higher order derivatives and jet manipulation in the CAPD library.

%TCIDATA{Version=5.00.0.2606}
%TCIDATA{LaTeXparent=0,0,MMFedit.tex}

\section{Appendix}

\label{sec:app}

\subsection{Proof of Lemma \protect\ref{lem:def-ml-ok}}

\label{app:ml}

\begin{proof}
It is known (see for example \cite[Sec. 3]{KZ}) that the limit in the
definition of logarithmic norms exists and the convergence is locally
uniform with respect to $A$. We will reduce our question to this.

We have for $h\in(0,1]$ on compact sets of $A$'s 
\begin{align*}
\frac{m(I+hA)-1}{h} & =\frac{\frac{1}{\Vert(I+hA)^{-1}\Vert}-1}{h} \\
& =\frac{\frac{1}{\Vert I-hA+O(h^{2})\Vert}-1}{h} \\
& \leq\frac{\frac{1}{\Vert I-hA\Vert-O(h^{2})}-1}{h} \\
& =\frac{\Vert I-hA\Vert^{-1}+O(h^{2})-1}{h} \\
& =-\frac{\Vert I-hA\Vert-1}{h}\frac{1}{\Vert I-hA\Vert}+O(h) \\
& \rightarrow-l(-A),\quad h\rightarrow0
\end{align*}

It is known that $l(A)$ is a convex function. Since $l(-A)$ is convex, $%
m_{l}(A)$ is concave.
\end{proof}

\subsection{Proof of Theorem \protect\ref{th:log-norm-ode-lower-bound}}

\label{app:log-norm-ode-lower-bound}

\begin{proof}
Observe that from Lemma~\ref{lem:def-ml-ok} it follows that 
\begin{equation*}
m_{l}(Df,W)=-\sup_{x\in W}l(-Df(x)).
\end{equation*}%
From Theorem~\ref{th:log-norm-ode} applied to $x^{\prime }=-f(x)$ with
initial conditions $x(t)$ and $y(t)$ we obtain 
\begin{equation*}
\Vert x(0)-y(0)\Vert \leq \exp \left( t\sup_{z\in W}l(-Df(z))\right) \Vert
x(t)-y(t)\Vert .
\end{equation*}%
Hence 
\begin{equation*}
\Vert x(0)-y(0)\Vert \exp \left( -t\sup_{z\in W}l(-Df(z))\right) \leq \Vert
x(t)-y(t)\Vert .
\end{equation*}%
Since $m(A)=-l(-A)$, our claim follows from the above.
\end{proof}

\subsection{Proof of Lemma \protect\ref{lem:lognorm-conv}}

\label{sec:app-lognorm}

\begin{proof}
We have 
\begin{eqnarray*}
((I+hA)x|(I+hA)x) &=&((I+hA)^{\top }(I+hA)x|x) \\
&=&\left( \left( I+h(A+A^{\top })\right) x|x\right) +h^{2}(A^{\top }Ax|x) \\
&=&\left( \left( I+h(A+A^{\top })\right) x|x\right) +O(h^{2}\Vert x\Vert
^{2}\Vert A^{\top }A\Vert ).
\end{eqnarray*}%
Therefore, (below we use the fact that $\sqrt{1+x}=1+\frac{1}{2}x+O\left(
x^{2}\right) $) 
\begin{eqnarray*}
\Vert I+hA\Vert &=&\max_{\Vert x\Vert =1}\sqrt{((I+hA)x|(I+hA)x))} \\
&=&\max_{\Vert x\Vert =1}\sqrt{\left( \left( I+h(A+A^{\top })\right)
x|x\right) +O(h^{2}\Vert A^{\top }A\Vert )} \\
&=&\max_{\Vert x\Vert =1}\sqrt{1+h((A+A^{\top })x|x)+O(h^{2}\Vert A^{\top
}A\Vert )} \\
&=&1+\frac{h}{2}\max_{\Vert x\Vert =1}((A+A^{\top })x|x)+O(h^{2}),
\end{eqnarray*}%
where $O(h^{2})$ is uniform with respect to $A\in W$. Hence by (\ref%
{eq:eucl-log-norm}) 
\begin{eqnarray*}
\Vert I+hA\Vert &=&1+h\max \{\lambda \in \text{Spectrum}((A+A^{\top
})/2)\}+O(h^{2}) \\
&=&1+hl(A)+O(h^{2}),
\end{eqnarray*}%
which concludes the proof.
\end{proof}

\subsection{Proof of Lemma \protect\ref{lem:ml-conv}}

\label{sec:app-ml-norm}

\begin{proof}
The proof follows from Lemma~\ref{lem:lognorm-conv}. All below estimates are
clearly uniform over a compact set $W$ and $h\in \lbrack 0,h_{0}]$, for $%
h_{0}$ which is sufficiently small $h_{0}=h_{0}(W)$.

We have (applying Lemma \ref{lem:lognorm-conv} in the 5th and (\ref%
{eq:ml-min-l}) in the last line) 
\begin{eqnarray*}
m(I+hA) &=&\frac{1}{\Vert (I+hA)^{-1}\Vert } \\
&=&\frac{1}{\Vert I-hA+O(h^{2})\Vert } \\
&=&\frac{1}{\Vert I-hA\Vert +O(h^{2})} \\
&=&\frac{1}{\Vert I-hA\Vert }+O(h^{2}) \\
&=&\frac{1}{1+hl(-A)+O(h^{2})}+O(h^{2}) \\
&=&1-hl(-A)+O(h^{2}) \\
&=&1+hm_{l}(A)+O(h^{2}),
\end{eqnarray*}%
as required.
\end{proof}

\subsection{Solving an implicit function problem in interval arithmetic\label%
{app:implicit-f-sol}}

Consider $f:\mathbb{R}^{k}\times \mathbb{R}^{l}\rightarrow \mathbb{R}^{l}$.
We wish to solve for $y$ satisfying 
\begin{equation*}
f(x,y\left( x\right) )=0.
\end{equation*}%
Consider $x\in \mathbb{R}^{k},y_{0}\in \mathbb{R}^{l},$ and a cube $%
Y=\prod_{i=1}^{l}\left[ a_{i},b_{i}\right] \subset \mathbb{R}^{l}$ and define%
\begin{equation*}
N\left( x,y_{0},Y\right) :=y_{0}-\left[ \frac{\partial f}{\partial y}\left(
x,Y\right) \right] ^{-1}f\left( x,y_{0}\right) .
\end{equation*}%
If $N\left( x,y_{0},Y\right) \subset Y$, then by the interval Newton method $%
y(x)\in Y.$ In practice, we can consider a cube $X\subset \mathbb{R}^{k},$
verify that $N\left( X,y_{0},Y\right) \subset Y,$ obtaining that $y(x)\in Y$
for all $x\in Y$. The method can be further refined by appropriate choices
of coordinates to improve the estimates (see for instance \cite[section 4.1]%
{Jay}).

\end{document}